%% file: main.tex
\documentclass[10pt]{article}

\input{preamble}

\title{Gradient descent reaches arbitrary accuracy in a fixed number of large steps on multiclass logistic regression}
\author{
  Andreea-Alexandra Musat and Nicolas Boumal\thanks{\'Ecole Polytechnique F\'ed\'erale de Lausanne (EPFL), Institute of Mathematics,
      \texttt{\{andreea.musat, nicolas.boumal\}@epfl.ch}.}
}
\date{\today}

\begin{document}

\maketitle

\begin{abstract}
    We show that gradient descent (GD) with step size $\eta$ can achieve $\varepsilon$ optimality gap on the multiclass logistic regression loss in a number of iterations $T$ that is \emph{independent} of $\varepsilon$.
    To do so, the step size $\eta$ must be chosen large enough as a function of $\varepsilon$ (and properties of the linearly separable dataset), while $T$ is a constant that only depends on the data.
    The relevant properties of the data include the number of points and their margin (for $T$), and one additional quantity (for $\eta$) that turns out to be necessary in dimension $d \geq 3$.
    The proofs build on the known fact that, as $\eta \to \infty$, suitably rescaled GD becomes the perceptron algorithm, and the latter finds a perfect classifier in $T$ iterations.
    A similar claim was shown for two classes in dimension $d = 2$ using other proof techniques.

    This result adds to previous works that showed GD on the logistic regression loss can be accelerated by taking large step sizes.
    Those provoke a wild, oscillatory initial phase, followed by a stable phase.
    We provide results of this (qualitatively different) nature, too.
\end{abstract}

\section{Introduction}

Traditional convex optimization theory prescribes a ``small'' step size for gradient descent.
For an $L$-smooth convex objective, a step size $\eta < 2/L$ ensures that the loss decreases at every iteration and yields the standard $\mathcal O(1/T)$ convergence rate in function value.

For logistic regression on linearly separable data, \citet{wu2024large} studied gradient descent (GD) with constant step sizes well above this classical threshold.
Although the loss may initially oscillate and the usual descent lemma no longer applies, they showed that GD still converges and, perhaps surprisingly, the loss decays at an $\widetilde{\mathcal O}(1/T^2)$ rate, improving on the standard bound.
In this setting, the nonmonotonic behavior usually associated with instability can instead improve optimization efficiency.

The analysis of \citet{wu2024large} identifies two phases in the dynamics.
The iterates first pass through an \emph{unstable region}, in which the loss may oscillate, before entering a \emph{stable region} after $\tau = \mathcal O(\eta)$ iterations.
Once in the stable region, the loss decreases at every iteration.
Then, for an iteration budget $T$, they show that choosing $\eta = \Theta(T)$ yields an accelerated $\widetilde{\mathcal O}(1/T^2)$ bound on the final loss.
However, since their bound on the transition time $\tau$ grows linearly with $\eta$, their result suggests that a larger step size may prolong the unstable phase.

In contrast, for logistic regression \emph{in two dimensions}, \citet{crawshaw2026tight} recently showed that, for sufficiently large step sizes, the number of iterations $\tau$ required to enter the stable region is bounded independently of $\eta$.
In addition, they showed that any target loss value $\varepsilon$ can be reached within a fixed number of iterations, independent of $\varepsilon$, by taking $\eta$ sufficiently large.
Thus, higher accuracy is obtained by increasing the step size rather than the number of iterations.

We show that these phenomena extend to multiclass logistic regression \emph{in all dimensions}.
Specifically, let $K \geq 2$ be the number of classes and consider a dataset $\{ (x_i, y_i) \}_{i=1}^n$, where $x_i \in \reals^d$ and $y_i \in \{ 1, \dots, K \}$.
We consider a linear softmax classifier parameterized by $W \in \reals^{K \times d}$, whose $k$th row is denoted by $w_k \in \reals^d$.
The associated \emph{multiclass cross-entropy loss} is
\begin{align} \label{eq:multiclass-loss}
    f(W) \coloneqq
    -\frac{1}{n} \sum_{i=1}^n \sum_{k=1}^K Y_{ik} \log\bigl( P(W)_{i k} \bigr),
    \qquad
    P(W)_{ik} \coloneqq
    \frac{\exp\bigl(\langle w_k,x_i\rangle\bigr)}{\sum_{j=1}^K \exp\bigl(\langle w_j,x_i\rangle\bigr)}.
\end{align}
Here, $\langle w_k,x_i\rangle$ represents the \emph{score} of class $k$ for data point $x_i$, and $P(W) \in \reals^{n \times K}$ is the matrix of row-wise softmax probabilities.
Lastly, $Y \in \reals^{n \times K}$ is the one-hot encoding of the labels, that is,
\begin{align} \label{eq:one-hot-labels}
    Y_{iy_i}=1,
    \qquad \textrm{and} \qquad
    Y_{ik}=0 \text{ for all } k\neq y_i.
\end{align}
We assume that the dataset is linearly separable and consider full-batch GD on $f$ with constant step size $\eta > 0$, whose sequence of iterates $\{\WWeta{t}\}_{t\geq0}$ is generated by the update rule
\begin{align*}
    \WWeta{t+1} = \WWeta{t} - \eta \nabla f(\WWeta{t}),
\end{align*}
initialized at $\WWeta{0}=0$.

\subsection{Main results}

We establish two complementary convergence guarantees for multiclass GD with large step sizes.
The first gives a two-phase convergence result in which the transition time grows at most linearly with the step size.
The second shows that, once the step size is larger than a data-dependent threshold, this dependence disappears and the transition occurs within a fixed number of iterations.

We now introduce the assumptions and notation needed to state these results precisely.
Throughout, we define $[n] = \{1, \dots, n\}$ and we use $\langle \cdot, \cdot \rangle$ and $\langle \cdot, \cdot \rangle_\mathrm{F}$ for the standard Euclidean inner products on $\reals^d$ and $\reals^{K \times d}$, respectively, and $\| \cdot \|$ and $\| \cdot \|_\mathrm{F} $ for the associated norms. 
We assume that the data satisfy the following:
\begin{enumerate}[label=\textbf{(A\arabic*)}]
    \item \label{ass:multiclass1} $\| x_i \| \leq 1$ for all $i \in [n]$.
    \item \label{ass:multiclass2} There exist $W^* \in \reals^{K \times d}$ and $\gamma > 0$ such that $\| W^* \|_\mathrm{F} \leq 1$ and
    \begin{align*}
        \langle w_{y_i}^*, x_i \rangle
        \geq
        \langle w_k^*, x_i \rangle + \gamma
        \qquad
        \text{ for all } i \in [n]
        \text{ and all } k \neq y_i.
    \end{align*}
\end{enumerate}
The assumptions are standard:~\ref{ass:multiclass2} requires the data to be linearly separable with positive margin, while the bounds $\| x_i \| \leq 1$ and $\|W^*\|_\mathrm{F} \leq 1$ merely fix the scale of the problem and are without loss of generality.
Note that the margin $\gamma$ need not be maximal: defining
\begin{align} \label{eq:multiclass-max-margin}
    \gamma_{\max}
    \coloneqq
    \max_{\|W\|_\mathrm{F} \leq 1}
    \min_{i \in [n]}
    \min_{k \neq y_i}
    \langle w_{y_i}-w_k,x_i\rangle,
\end{align}
our results hold for any admissible $\gamma$, and in particular with $\gamma=\gamma_{\mathrm{max}}$.

\begin{remark}
    For $K=2$, the multiclass problem reduces to binary logistic regression under the identification $w = w_1 - w_2$, with $f(W) = f_{\mathrm{bin}}(w)$, $\eta_{\mathrm{bin}} = 2 \eta$, and $\gamma_{\mathrm{bin}} = \gamma_\mathrm{max} / \sqrt{2}$ (see Appendix~\ref{appendix:K2-binary-equivalence}).
\end{remark}

For GD with step size $\eta$, we define the \emph{stable region} as the set where $f(W) < 1/(2 \eta)$, a threshold below which we obtain monotonic descent at every subsequent iteration (Lemma~\ref{lemma:descent-lemma-if-stable}).
To quantify when the iterates enter this phase, we introduce the \emph{transition time}
\begin{align} \label{eq:tau-def}
    \tau
    \coloneqq
    \inf \left\{ t \geq 0 \, \middle| \, f(\WWeta{t}) < \frac{1}{2\eta} \right\},
\end{align}
with the convention that $\inf\varnothing = +\infty$.

We now state informal versions of our main results, suppressing constants and technical conditions on the step size.
The first result extends the two-phase convergence analysis of GD on the logistic regression loss by~\citet{wu2024large} from the binary to the multiclass setting.
The proof, however, is different: it relies on a multiclass perceptron inequality (Lemma~\ref{lemma:main-perceptron}) and yields a $\mathcal O(1/T^2)$ last-iterate rate, avoiding the additional logarithmic factors in \citep[Cor.~2]{wu2024large}.
The corresponding quantitative statements are given in Theorem~\ref{thm:multiclass-two-phase} and Corollary~\ref{cor:multiclass-accelerated-rate}.

\begin{theorem} (informal) \label{thm:multiclass-two-phase-informal}
    Run GD with constant step size $\eta \geq \frac{n}{2}$ on $f$~\eqref{eq:multiclass-loss}, initialized at $\WWeta{0} = 0$.
    As defined, the iterates enter the stable region $\{ W \mid f(W) < 1/2\eta \}$ after $\tau$~\eqref{eq:tau-def} iterations.
    Then:
    \begin{enumerate}[label=(\roman*)]
    \item (\textbf{Transition time.})
        The transition time $\tau$ satisfies $\tau \leq \mathcal O(\frac{\eta + \log(\eta K)}{\gamma^2})$.

    \item (\textbf{Loss decrease in the stable region.})
        For all $t > \tau$, the iterates remain in the stable region and the loss satisfies
        $f(\WWeta{t}) = \mathcal O \left( \frac{1}{\eta \gamma^2 (t-\tau)} \right)$.
    \end{enumerate}
\end{theorem}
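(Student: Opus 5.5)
We prove the two parts separately. Part (i) uses a multiclass perceptron-type potential argument. Part (ii) is a stable-phase descent analysis started from $\WWeta{\tau}$.

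\textbf{Transition time.} We track the alignment $\langle \WWeta{t}, W^*\rangle_\mathrm{F}$ and the norm $\|\WWeta{t}\|_\mathrm{F}$.

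Write $-\nabla f(W) = \frac1n\sum_i (e_{y_i} - p_i(W)) x_i^\top$, where $p_i$ is the $i$th softmax row. By \ref{ass:multiclass2}, each term has inner product with $W^*$ equal to
\[
\sum_{k\ne y_i} p_{ik}\langle w^*_{y_i}-w^*_k,x_i\rangle \geq \gamma\,(1-p_{iy_i}).
\]
Hence $\langle -\nabla f(W),W^*\rangle_\mathrm{F} \geq \gamma\, g(W)$, with $g(W) \coloneqq \frac1n\sum_i (1-p_{iy_i})$. Summing over iterations gives
\[
\langle \WWeta{t}, W^*\rangle_\mathrm{F} \geq \eta\gamma \sum_{s<t} g(\WWeta{s}).
\]

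For the norm, expand $\|\WWeta{s+1}\|_\mathrm{F}^2 = \|\WWeta{s}\|_\mathrm{F}^2 - 2\eta\langle \nabla f,\WWeta{s}\rangle_\mathrm{F} + \eta^2\|\nabla f\|_\mathrm{F}^2$. Two bounds control this.
\begin{itemize}
\item The gradient norm satisfies $\|\nabla f\|_\mathrm{F}\le \frac1n\sum_i\|e_{y_i}-p_i\|\le \sqrt2\, g$, so the quadratic term is at most $2\eta^2 g^2 \leq 2\eta^2 g$.
\item The cross term is handled by a multiclass perceptron inequality, which is what I expect Lemma~\ref{lemma:main-perceptron} to provide. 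The idea is that $\langle \nabla f(W), W\rangle_\mathrm{F} = \frac1n\sum_i\sum_k (p_{ik}-Y_{ik})\langle w_k,x_i\rangle$ is bounded above by a constant times $g(W)$ plus a $\log K$ term. When a point is misclassified, softmax weights higher scores more, so the term is nonpositive up to an entropy-type correction.
\end{itemize}
Together these give $\|\WWeta{t}\|_\mathrm{F}^2 \lesssim (\eta^2+\eta\log(\eta K))\sum_{s<t} g_s$. Cauchy--Schwarz with $\|W^*\|_\mathrm{F}\le1$ then yields
\[
\eta\gamma G_t \le \sqrt{(\eta^2+\eta\log(\eta K))\,G_t}, \qquad G_t \coloneqq \sum_{s<t} g(\WWeta{s}).
\]
Therefore $G_t\lesssim (1+\log(\eta K)/\eta)/\gamma^2$.

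We then convert this into a statement about the loss. The relation $f \le \frac1n\sum_i -\log(1-(1-p_{iy_i}))$ shows that $g_s$ small forces $f$ small. If $f(\WWeta{s})\ge 1/(2\eta)$ for all $s<t$, then $g_s \gtrsim \min(1/\eta,\,1/n)$, and the hypothesis $\eta\ge n/2$ makes this $\gtrsim 1/\eta$. Thus $t/\eta \lesssim G_t$, i.e.\ $\tau \lesssim \eta\, G_t = \mathcal O\bigl((\eta+\log(\eta K))/\gamma^2\bigr)$.

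The main obstacle is the cross-term bound. The softmax correction must be controlled by $g$ without producing an extra factor of $\eta$, which in practice means carefully splitting points by whether $p_{iy_i}$ is close to $1$.

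\textbf{Stable phase.} Once $f(\WWeta{\tau})<1/(2\eta)$, Lemma~\ref{lemma:descent-lemma-if-stable} gives monotone descent, so the iterates stay in the stable region. It also gives a descent inequality of the form $f(\WWeta{t+1})\le f(\WWeta{t})-\frac{\eta}{2}\|\nabla f(\WWeta{t})\|_\mathrm{F}^2$. This uses local smoothness: the Hessian is bounded by a constant times $f$ near low loss, so the effective smoothness constant there is at most $\mathcal O(1/\eta)$.

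Next we lower bound the gradient by $\|\nabla f(W)\|_\mathrm{F}\ge\langle-\nabla f(W),W^*\rangle_\mathrm{F}\ge\gamma\, g(W)$. In the stable region $g\ge f/2$, since $-\log(1-u)\le 2u$ for $u\le1/2$, and $u=1-p_{iy_i}\le n f<1/2$ when $\eta\ge n/2$ is used suitably. This gives the recursion
\[
f_{t+1}\le f_t - c\,\eta\gamma^2 f_t^2 .
\]
The standard argument via $1/f_{t+1}\ge 1/f_t + c\,\eta\gamma^2$ then yields $f(\WWeta{t})=\mathcal O\bigl(1/(\eta\gamma^2(t-\tau))\bigr)$.
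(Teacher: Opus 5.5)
Your architecture matches the paper's, written in the unscaled variable $W=\eta Z$ (your $g$ is the paper's $KS$). It has the same ingredients: alignment lower bound, norm upper bound, Cauchy--Schwarz, and $g\gtrsim 1/\eta$ while $f\ge 1/(2\eta)$. Part (ii) is also the paper's argument. The gap is the cross term, which is the one non-routine estimate and which you leave open.

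Lemma~\ref{lemma:main-perceptron} does not supply it. That lemma keeps $\sum_t\langle Z^{(t)},H(Z^{(t)})^\top X\rangle_\mathrm{F}$ on the left-hand side, and bounding it is a separate step (Lemmas~\ref{lemma:soft-perceptron-cross-term} and~\ref{lemma:soft-perceptron-type-bound-refined}). Note also that the norm recursion needs an upper bound on $\langle-\nabla f(W),W\rangle_\mathrm{F}$, not on $\langle\nabla f(W),W\rangle_\mathrm{F}$.

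Moreover, the form you describe does not yield the stated rate.
\begin{itemize}
\item Read additively (``$Cg+\log K$''), it is true, because the term is at most the average entropy of the rows $p_i$. But $2\eta\log K$ per step accumulates to $2\eta t\log K$ in $\|\WWeta{t}\|_\mathrm{F}^2$. Even after using $t\le 4\eta G_t$, this only gives $\tau=\mathcal O(\eta(1+\log K)/\gamma^2)$.
\item Read multiplicatively (``$(C+\log K)\,g$''), it is false even in the unstable region. For $n=1$, $K=2$, the term equals $m\log((1-m)/m)$ with $m=1-p_{1y_1}$. The condition $f\ge 1/(2\eta)$ allows $m\approx 1/(2\eta)$, so the ratio of the term to $g=m$ is about $\log(2\eta)$.
\end{itemize}

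The missing idea is the entropy bound, together with where $\log\eta$ enters. Since $\langle w_{y_i}-w_k,x_i\rangle=\log(p_{iy_i}/p_{ik})\le-\log p_{ik}$, one has $\langle-\nabla f(W),W\rangle_\mathrm{F}\le\frac1n\sum_i\Ent(p_i)$. Write $\Ent(p_i)=h(m_i)+m_i\Ent(\tilde p_i)$, where $\tilde p_i$ is the renormalized distribution over the wrong classes. Using $\Ent(\tilde p_i)\le\log(K-1)$, concavity of the binary entropy $h$, and $h(m)\le m\log(e/m)$, the bound becomes $g\log(e(K-1)/g)$.

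The factor $\log\eta$ then comes only from invoking $g\ge 1/(4\eta)$ (Lemma~\ref{lemma:large-f-implies-mistake-mass}) a second time, inside the logarithm. The cross term then contributes at most $2\eta\log(4e(K-1)\eta)\,g_s$ per step. Your sketch uses that lower bound only to convert $G_t$ into $t$. Splitting points by the size of $m_i$, as you suggest, can also be made to work, but it likewise needs $g\gtrsim 1/\eta$ to absorb the nearly-correct points. With this bound in place, your chain closes exactly as in the paper, giving $\tau\le 8(\eta+\log(4e(K-1)\eta))/\gamma^2$.

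There is also a minor slip in part (ii). The conditions $\eta\ge n/2$ and $f<1/(2\eta)$ give only $nf<1$, not $nf<1/2$. The conclusion $g\ge f/2$ still holds, e.g.\ from $g\ge(1-e^{-nf})/n$ and $1-e^{-x}\ge x/2$ on $[0,1]$.
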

    
Theorem~\ref{thm:multiclass-two-phase-informal} highlights a trade-off: increasing the step size may linearly increase the time required to enter the stable region, but reduces the loss bound once it is there.
Balancing these two effects by choosing $\eta$ in terms of the iteration budget $T$ gives the following accelerated rate for the last iterate.

\begin{corollary} \label{cor:multiclass-accelerated-rate-informal}
    Assume that the iteration budget satisfies $ T \gtrsim \frac{n+\log K}{\gamma^2} $, and run GD with a suitable constant step size $ \eta = \Theta( \gamma^2 T) $.
    Then, the iterates enter the stable region within the first half of the iteration budget, that is, $\tau \leq \frac{T}{2}$, and the loss at iteration $T$ satisfies $f(\WWeta{T}) = \mathcal O \left(\frac{1}{\gamma^4 T^2} \right)$.
\end{corollary}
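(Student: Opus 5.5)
The plan is to derive the corollary directly from Theorem~\ref{thm:multiclass-two-phase-informal}, which I take in its quantitative form (Theorem~\ref{thm:multiclass-two-phase}). That result gives constants $C_1, C_2$ such that, for every $\eta \geq n/2$,
\begin{align*}
\tau \leq C_1 \frac{\eta + \log(\eta K)}{\gamma^2}
\qquad \text{and} \qquad
f(\WWeta{t}) \leq \frac{C_2}{\eta\gamma^2 (t-\tau)} \quad \text{for all } t > \tau .
\end{align*}
Set $\eta = c\gamma^2 T$, where $c>0$ is a small absolute constant fixed below. The argument has three steps.

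\textbf{Step 1: the step size is admissible.} We need $\eta \geq n/2$, that is, $c\gamma^2 T \geq n/2$. This follows from $T \geq C_0 (n+\log K)/\gamma^2$ provided $C_0 \geq 1/(2c)$.

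\textbf{Step 2: the transition happens in the first half of the budget.} We show $\tau \leq T/2$ by splitting the bound on $\tau$ into two terms.
\begin{itemize}
\item The linear term gives $C_1\eta/\gamma^2 = C_1 c T$. This is at most $T/4$ once $c \leq 1/(4C_1)$.
\item The logarithmic term gives $C_1\log(\eta K)/\gamma^2 = C_1\bigl(\log K + \log(c\gamma^2 T)\bigr)/\gamma^2$.
\begin{itemize}
\item The $\log K$ part is at most $T/8$ by the assumption $T \gtrsim \log K/\gamma^2$, with a large enough constant.
\item For $\log(c\gamma^2T) \leq \log T$ (using $\gamma \leq 1$ and $c\le 1$), we need $C_1\log T \leq \gamma^2 T/8$.
\end{itemize}
\end{itemize}

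The bound $C_1\log T \leq \gamma^2 T/8$ is the only slightly delicate point, because the hypothesis on $T$ is not stated with a $\log(1/\gamma)$ term. I would handle it as follows. Write $s = \gamma^2 T$, so the condition becomes $C_1 \log(s/\gamma^2) \leq s/8$, that is, $C_1\log s + 2C_1\log(1/\gamma) \leq s/8$.
\begin{itemize}
\item The term $C_1 \log s \leq s/16$ holds once $s \geq C_0 n$ is large enough, which the hypothesis gives since $n \geq 1$.
\item For $2C_1\log(1/\gamma)$, I would use that $\gamma \geq$ a quantity related to the data. If no such relation is available, I would absorb this term into the ``$\gtrsim$'', allowing the hidden constant to include a $\log(1/\gamma)$ factor; this is harmless in an informal statement.
\end{itemize}
Alternatively, I would first check whether the quantitative theorem actually bounds $\tau$ by $\log(\eta K)$ with $\eta$ appearing only through $\gamma^2T$. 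In that case the issue disappears, because $\log(\eta K)\le \log(cT\gamma^2)+\log K$ and $\log s \le s$ at scale.

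\textbf{Step 3: the final loss bound.} Since $\tau \leq T/2$, we have $T-\tau \geq T/2$, and $T > \tau$. Part (ii) of the theorem then gives
\begin{align*}
f(\WWeta{T}) \leq \frac{2C_2}{c\gamma^2 T \cdot \gamma^2 T} = \mathcal O\!\left(\frac{1}{\gamma^4T^2}\right).
\end{align*}

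The main obstacle is the constant bookkeeping in Step 2, specifically the logarithmic term. Everything else is direct substitution.
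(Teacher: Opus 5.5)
Your overall route is the paper's. You substitute $\eta = c\gamma^2 T$ into the quantitative Theorem~\ref{thm:multiclass-two-phase}, check $\eta \geq n/2$, show $\tau \leq T/2$, and then apply part (ii) with $T-\tau \geq T/2$. Steps 1 and 3 are fine.

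The problem is the main line of Step 2, and the difficulty there is self-inflicted. By bounding $\log(c\gamma^2 T) \leq \log T$, you discard exactly the factor that makes the logarithm harmless. You are then left needing $C_1 \log T \leq \gamma^2 T/8$, and the hypothesis cannot give this. At the minimal budget, $\gamma^2 T = C_0(n+\log K)$ stays fixed, while $\log T \to \infty$ as $\gamma \to 0$ with $n, K$ fixed. Your fallback of absorbing $\log(1/\gamma)$ into the $\gtrsim$ is not harmless either. It changes the hypothesis to $T \gtrsim (n+\log K+\log(1/\gamma))/\gamma^2$, or equivalently uses a $\gamma$-dependent constant, which is a weaker corollary than the one stated. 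As written, the main line therefore does not prove the statement.

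The fix is the alternative you mention, and there is nothing to check. Theorem~\ref{thm:multiclass-two-phase}(i) reads $\tau \leq \frac{8}{\gamma^2}\bigl(\eta + \log(4e(K-1)\eta)\bigr)$, with the step size itself inside the logarithm, and you chose $\eta = c\gamma^2 T$. Simply use $\log \eta \leq \eta/2$ (valid for all $\eta>0$). Then $\frac{8}{\gamma^2}\log\eta \leq \frac{4\eta}{\gamma^2} = 4cT$ is absorbed by the same smallness of $c$ that controls the linear term. No condition involving $\log T$ or $\log(1/\gamma)$ ever arises.

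This is exactly the paper's argument. It takes $\eta = \gamma^2 T/32$, and the hypothesis $T \geq 64\log(4e(K-1))/\gamma^2$ gives $\eta \geq 2\log(4e(K-1))$. Hence $\log(4e(K-1)\eta) \leq \eta/2 + \log\eta \leq \eta$, so $\tau \leq 16\eta/\gamma^2 = T/2$. Part (ii) then gives $f(\WWeta{T}) \leq 8/\bigl((T-\tau)\eta\gamma^2\bigr) \leq 512/(\gamma^4 T^2)$. Commit to that version and your proof is complete.
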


Theorem~\ref{thm:multiclass-two-phase-informal} shows that the transition time grows at most linearly with $\eta$.
In fact, once the step size is sufficiently large, it becomes independent of $\eta$ (even though the stable region keeps shrinking).
For similar reasons (and surprisingly to us), by increasing the step size, we can make the loss arbitrarily small in a number of iterations that is independent of the target loss.

\begin{theorem} (informal) \label{thm:multiclass-main-up-informal}
    Fix a dataset $\{ (x_i, y_i) \}_{i = 1}^n$ satisfying~\ref{ass:multiclass1} and~\ref{ass:multiclass2}. 
    There exist a constant $\delta > 0$ and an iteration count $T \leq \left\lfloor 4n / \gamma^2 \right\rfloor + 1$ (that depend only on this dataset)
    such that, if we optimize $f$ in~\eqref{eq:multiclass-loss} using GD initialized at $\WWeta{0} = 0$ with any constant step size $\eta$, then 
    \begin{enumerate}[label=(\roman*)]
    \item (\textbf{Transition time.})
        There exists a threshold $\eta_{\mathrm{tr}}^* = \eta_{\mathrm{tr}}^*(\delta,T,K)$ such that, if $\eta \geq \eta_{\mathrm{tr}}^*$, the iterates enter the stable region in $\tau \leq T$ iterations.
    \item (\textbf{$\varepsilon$-loss at a fixed iteration $T$ independent of $\eta$ and $\varepsilon$.})
        For any $\varepsilon \in (0,1)$, there exists a threshold $\eta_{\varepsilon}^* = \eta_{\varepsilon}^*(\delta,T,K,\varepsilon)$ such that, if $\eta \geq \eta_{\varepsilon}^*$, then $f(\WWeta{T}) < \varepsilon$.
    \end{enumerate}
\end{theorem}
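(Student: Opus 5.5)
The plan follows the abstract's hint: rescale GD by the step size and compare it to the multiclass perceptron. Set $V^{(t)}_\eta \coloneqq \WWeta{t}/\eta$. The GD update becomes
\[
V^{(t+1)}_\eta = V^{(t)}_\eta - \nabla f(\eta V^{(t)}_\eta),
\qquad
\nabla f(W) = \tfrac1n (P(W)-Y)^\top X,
\]
where $X \in \reals^{n\times d}$ stacks the $x_i$. Fix $V$ and let $\eta\to\infty$. Then $P(\eta V)_{i\cdot}$ tends to the uniform distribution over the argmax set $\arg\max_k \langle v_k, x_i\rangle$. Hence the rescaled dynamics converge to a deterministic ``soft-tie'' multiclass perceptron:
\[
V^{(t+1)} = V^{(t)} + \tfrac1n\sum_i \bigl(e_{y_i} - u_i(V^{(t)})\bigr)x_i^\top,
\]
with $u_i$ the uniform vector on the argmax set. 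This is a batch perceptron whose updates vanish on correctly and strictly classified points. At $V^{(0)}=0$ every class is tied, which is exactly why the ties must be tracked.

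\textbf{Step 1 (the limit perceptron terminates).} The standard Novikoff argument applies. As long as some point is not strictly correctly classified, $\langle W^*, V^{(t+1)}-V^{(t)}\rangle_\mathrm{F} \geq \gamma/n$ times a positive count. Meanwhile $\|V^{(t+1)}\|_\mathrm{F}^2 - \|V^{(t)}\|_\mathrm{F}^2$ is bounded by a constant over $n$: the cross term is $\le 0$, because points update only toward classes that did not beat their label. Combining these via Cauchy--Schwarz shows that within $T \le \lfloor 4n/\gamma^2\rfloor+1$ iterations we reach $V^{(T)}$ with $\min_i \min_{k\ne y_i}\langle v^{(T)}_{y_i}-v^{(T)}_k,x_i\rangle \eqqcolon 2\delta>0$. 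The constant $4$ should come from Lemma~\ref{lemma:main-perceptron} with the relevant normalizations, and I would invoke that lemma directly. The value $\delta$ is defined this way; it depends only on the data, since the limit trajectory does.

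\textbf{Step 2 (finite-$\eta$ tracking; main obstacle).} We must show $\|V^{(t)}_\eta - V^{(t)}\|_\mathrm{F}\to 0$ for each $t\le T$, with explicit rates. The difficulty is that the softmax limit is discontinuous exactly at ties, and exact ties occur at $t=0$ and possibly later. I expect this to be the source of the extra data quantity needed in $d\ge 3$. My first approach is the following.

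\begin{itemize}
\item Show by induction that $V^{(t)}_\eta = V^{(t)} + E^{(t)}_\eta$. Here the error $E^{(t)}_\eta$ is $O(K e^{-\eta \rho})$, with $\rho$ the smallest nonzero score gap $\min\{\langle v^{(t)}_j - v^{(t)}_k, x_i\rangle \neq 0 : t< T\}$.
\item Exact ties should be preserved exactly, by a symmetry argument: tied classes receive identical updates in both dynamics, since tied scores produce equal softmax weights.
\item On non-tied pairs, the softmax is within $K e^{-\eta(\rho - 2\|E\|)}$ of its limit.
\end{itemize}

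If exact-tie preservation fails for general data, I would instead define $\delta$ to include this minimal nonzero gap $\rho$ along the limit trajectory. This yields thresholds $\eta^*(\delta,T,K)$.

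\textbf{Step 3 (conclusion).} At iteration $T$, every margin of $\WWeta{T} = \eta V^{(T)}_\eta$ is at least $\eta\delta$ once $\eta$ is large enough that the error is below $\delta/2$. Then
\[
f(\WWeta{T}) \le \log\bigl(1+(K-1)e^{-\eta\delta}\bigr) \le (K-1)e^{-\eta\delta}.
\]
For (ii), choose $\eta_\varepsilon^*$ so that this is $<\varepsilon$, namely $\eta \gtrsim \delta^{-1}\log(K/\varepsilon)$. For (i), require $(K-1)e^{-\eta\delta} < 1/(2\eta)$, which holds for all $\eta$ beyond some $\eta_{\mathrm{tr}}^*(\delta,T,K)$ because $\eta e^{-\eta\delta}\to0$. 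This gives $\tau\le T$.
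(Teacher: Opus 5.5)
Your Steps 1 and 3 are essentially the paper's Lemma~\ref{lemma:multiclass-perceptron-finite-time} and Proposition~\ref{prop:loss-decrease-with-shadowing}. Step 2, however, has a genuine gap: the claim that the finite-$\eta$ dynamics preserves exact ties of the limit trajectory is false. The update of row $k$ is $\frac{1}{n}\sum_{i'}(Y_{i'k}-\Pi_{i'k})x_{i'}$, a sum over \emph{all} points. So a tie between classes $j,k$ at $x_i$ says nothing about how $j$ and $k$ are treated at other points $i'$, where they may lie on different sides of the argmax or carry different labels.

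Only the first step is exact, because $P(0)=Q(0)$. After that, the finite-$\eta$ rows differ from the limit rows by perturbations of order $e^{-\eta\rho}$, and these do not cancel. Suppose the limit trajectory has two tied \emph{winners} $j,k$ at some point $x_i$ at a later time; a fixed, non-generic dataset can produce this. Then the finite-$\eta$ scores there differ by some $\epsilon\neq 0$ of order $e^{-\eta\rho}$. The softmax splits the winning mass in ratio $e^{\eta\epsilon}$, so it deviates from the uniform hardmax weights by order $\eta\epsilon$, not by an exponentially small amount. Hence your induction $E^{(t)}_\eta=O(Ke^{-\eta\rho})$ does not close: each step can amplify the error by a factor of order $\eta$. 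Your fallback of adding the minimal nonzero gap to $\delta$ does not help, because the problem comes precisely from pairs whose limit gap is zero.

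The missing idea is a softmax-versus-hardmax estimate that tolerates near-ties among winners. The paper's Lemma~\ref{eq:softmax-hardmax-bound} splits $p$ into its mass on and off the winner set $\widetilde{\mathcal W}$. On $\widetilde{\mathcal W}$, the hardmax coincides with the softmax of the exactly tied limit scores restricted to $\widetilde{\mathcal W}$. Since softmax is $\frac{\eta}{2}$-Lipschitz in the scores, this gives $\|p-q\|\le\sqrt{2}\sum_{k\notin\widetilde{\mathcal W}}p_k+\frac{\eta}{2}\|s-\tilde s\|$.

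This yields the recurrence $\omega^{(t+1)}\le(1+\frac{\eta}{2})\,\omega^{(t)}+\sqrt{2}(K-1)e^{-\eta\delta/2}$ for $\omega^{(t)}=\|V^{(t)}_\eta-V^{(t)}\|_\mathrm{F}$. It is valid while $\omega^{(t)}<\delta/(2\sqrt{2})$, with $\delta$ any lower bound on the winner/non-winner gaps along the limit trajectory (your $\rho$ would do). It follows that $\omega^{(T)}\lesssim\frac{K-1}{\eta}e^{-\eta\delta/2}(1+\frac{\eta}{2})^T$. Keeping this below $\delta/(2\sqrt{2})$ requires $\frac{\eta\delta}{2}>T\log(1+\frac{\eta}{2})$ (Lemma~\ref{lemma:eta-condition}), i.e.\ roughly $\eta\gtrsim\frac{T}{\delta}\log\frac{T}{\delta}$. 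This is where $T$ genuinely enters the step-size threshold, which your argument never needed. With this shadowing bound in place (Lemma~\ref{lemma:multiclass-shadowing}), your Step 3 goes through essentially as written.
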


For binary logistic regression in two dimensions $(K=2, d=2)$, \citet{crawshaw2026tight} showed a similar result, but with uniform step size thresholds depending only on the coarser dataset parameters $n$ and $\gamma$.
Specifically, they showed that there exist $\eta_0(n, \gamma)$ and $T(n, \gamma)$ such that GD with any step size $\eta \geq \eta_0(n, \gamma)$ on any dataset with $n$ samples and margin $\gamma$ enters the stable region after $\tau \leq T(n, \gamma)$ iterations.
In Theorem~\ref{thm:counterexample}, we further show that such a uniform result does not extend to higher dimensions.
This justifies the appearance of a new data-dependent constant such as $\delta$.

Unlike \citet{crawshaw2026tight}, whose analysis relies on the region outside the stable one being disconnected (which holds only for $d = 2$), our proof relies on taking $\eta \to \infty$.
As already identified by~\citet{tyurin2025logistic} in the binary case, taking $\eta \to \infty$ produces a well-defined limiting dynamical system, namely, a batched multiclass hard perceptron~\eqref{eq:multiclass-infinity-system}.
Using the perceptron inequality (Lemma~\ref{lemma:main-perceptron}), we show that the~\eqref{eq:multiclass-infinity-system} dynamics produce a perfect classifier after $T \leq \left\lfloor 4n / \gamma^2 \right\rfloor + 1$ steps, and subsequently terminates.
Then, for every fixed dataset, we quantitatively show that once the step size exceeds a data-dependent threshold, the rescaled GD iterates closely track the corresponding perceptron iterates up to time $T$.
In particular, this implies that the $T$-th iterate of GD is also a perfect classifier, which ultimately implies a bound on $f(\WWeta{T})$ that decays exponentially in $\eta$ (Proposition~\ref{prop:loss-decrease-with-shadowing}).
This is the key ingredient for both statements of Theorem~\ref{thm:multiclass-main-up-informal}.

\begin{remark}
    Theorems~\ref{thm:multiclass-two-phase-informal} and~\ref{thm:multiclass-main-up-informal} describe different large step size regimes.
    Corollary~\ref{cor:multiclass-accelerated-rate-informal} gives an explicit $\mathcal O(1/T^2)$ guarantee by choosing the step size $\eta$ as a function of the iteration budget $T$, whereas Theorem~\ref{thm:multiclass-main-up-informal} shows that, beyond a data-dependent threshold, the transition time itself becomes independent of the step size.
    Thus, neither result subsumes the other.
\end{remark}

\subsection{Proof ideas}

To analyze gradient descent with large step sizes, we draw on its close resemblance to the perceptron algorithm \citep{rosenblatt1958perceptron,novikoff1962convergence}.
In Section~\ref{section:general-system}, we introduce a \emph{general batched perceptron dynamics}~\eqref{eq:general-system}.
Using its update rule $\ZZeta{t+1} = \ZZeta{t} + \Delta^{(t)}$ (with $\Delta^{(t)}$ specified in Section~\ref{section:general-system}), we bound the progress of the alignment $\langle \ZZeta{t}, W^* \rangle_\mathrm{F}$ from below (Lemma~\ref{lemma:good-direction-progress}), and the iterate norm $\| \ZZeta{t} \|_\mathrm{F}$ from above (Lemma~\ref{lemma:iterate-squared-norm}).
Both estimates depend on the cumulative \emph{average mistake mass} up to time $t$ (defined in Eq.~\eqref{eq:avg-mistake-def}).
Combining these bounds with the Cauchy--Schwarz inequality gives the main perceptron inequality (Lemma~\ref{lemma:main-perceptron}), which we use for both of our main results.

To study the GD trajectory for large step sizes $\eta$ and ultimately take the limit $\eta \to \infty$, we must normalize the iterates.
We therefore introduce the rescaled variables $\ZZeta{t} = \WWeta{t} / \eta$.
After this change of variables, GD takes the form of the \emph{soft perceptron} dynamics~\eqref{eq:multiclass-eta-system}, an instance of the general system~\eqref{eq:general-system}.
Specializing the main perceptron inequality (Lemma~\ref{lemma:main-perceptron}) to this soft perceptron yields the refined inequality of Lemma~\ref{lemma:finite-eta-perceptron-inequality}, which bounds the cumulative average mistake mass along the trajectory.

To turn this inequality into a bound on the transition time $\tau$, we adapt the standard perceptron argument (Section~\ref{subsection:hard-perceptron-limit}).
Instead of evaluating the perceptron inequality at the first perfect classifier iterate, we evaluate it at the first time $\tau$ at which the dynamics enters the stable region as in~\eqref{eq:tau-def}.
The definition of $\tau$ then gives a lower bound on the mistake mass accumulated before time $\tau$, which, combined with the refined perceptron inequality, yields $\tau=\mathcal O(\eta)$.
The remainder of the analysis then follows standard arguments \citep{wu2024large,crawshaw2026tight}: once the dynamics enters the stable region, the loss decreases at every subsequent iteration.

For our second result, we study the rescaled dynamics in the limit $\eta\to\infty$.
Crucially, this limiting iteration map is well defined: the row-wise softmax becomes a row-wise \emph{hardmax} that splits mass uniformly among classes with maximal score (Lemma~\ref{lemma:eta-infinity-limit}).
The limiting dynamics is a multiclass \emph{hard perceptron dynamics}~\eqref{eq:multiclass-infinity-system}, and the standard argument above gives a finite termination time $T = \mathcal O \left( n / \gamma^2 \right)$ at which it reaches a perfect classifier.

To transfer this conclusion to GD, we show that the soft perceptron~\eqref{eq:multiclass-eta-system} \emph{shadows} the hard perceptron~\eqref{eq:multiclass-infinity-system} trajectory up to time $T$ if the step size is large enough.
The required step size depends on the smallest positive gap $\delta$ between a maximal and a nonmaximal score encountered along the~\eqref{eq:multiclass-infinity-system} trajectory (defined in Eq.~\eqref{eq:delta-def}).
This gap is positive because only finitely many iterates and class scores are involved.
An induction on $t$ shows that, up to time $T$, the~\eqref{eq:multiclass-eta-system} scores remain close to the~\eqref{eq:multiclass-infinity-system} ones, which in particular implies that GD also produces a perfect classifier at time $T$.
Additionally, the same argument yields a bound on the loss that decays exponentially in $\eta$, giving both the bound on the transition time $\tau \leq T$ and, after increasing $\eta$ if necessary, $f(\WWeta{T}) < \varepsilon$ for any prescribed $\varepsilon > 0$.

\section{Related work}

\paragraph{Optimization with large step sizes.}
Classical analyses of gradient descent typically impose a condition on the step size that guarantees a decrease in the objective at every iteration.
Nevertheless, empirical studies have shown that GD can still converge despite this nonmonotonic behavior~\citep{smith2019super,lewkowycz2020large,cohen2021gradient}.
More recent theoretical work has established convergence in several such unstable regions and has proposed mechanisms including progressive sharpening, self-stabilization, and the interaction between successive gradient descent steps~\citep{ahn2022understanding,arora2022understanding,damian2022self,chen2023beyond}.

A distinct line of work considers non-constant step size schedules that contain occasional long steps.
Although such steps need not decrease the objective individually, their cumulative effect can accelerate gradient descent on smooth convex problems.
Examples include fractal step size schedules~\citep{agarwal2021acceleration}, long-step schedules~\citep{grimmer2024provably,grimmer2025accelerated}, the silver step size schedule of~\citet{altschuler2025acceleration} and schedules involving restarts~\citep{clement2026gradient}.

\paragraph{Binary logistic regression with large step sizes.}
For linearly separable logistic regression,~\citet{wu2023implicit} proved that, for any constant step size, the loss converges to zero and the iterates converge in direction to the maximum-margin separator.
Subsequently,~\citet{wu2024large} showed that large step sizes can improve the convergence rate: after an initial nonmonotone phase of at most $\mathcal O( \eta )$ iterations, gradient descent enters a region in which the loss decreases at each step.
In two dimensions,~\citet{crawshaw2026tight} showed that, for any $\eta$ large enough, the length of this initial unstable phase is in fact independent of $\eta$.
Concurrent works have further refined this picture: \citet{gong2026improved} showed that the transition time is upper bounded by an expression polylogarithmic in $\eta$, while \citet{wen2026tight} showed that a similar polylogarithmic dependence in $\eta$ is tight in the worst case.

The connection with perceptron dynamics was studied by~\citet{tyurin2025logistic}, who showed that, for binary logistic regression, rescaled GD converges to a batch perceptron as the step size tends to infinity, and analyzed the resulting iteration complexity for finding a separating classifier.
Extensions to this phenomenon include regularized logistic regression~\citep{wu2026large}, local gradient descent on heterogeneous data~\citep{crawshaw2025constant}, and more general Fenchel--Young losses~\citep{bao2026any}.
For nonseparable data, however, GD with large step sizes on the logistic regression problem can converge to stable periodic orbits~\citep{meng2024gradient}.
A separate line of work considers adaptive or increasing step sizes for separable logistic regression~\citep{axiotis2023gradient,zhang2025gradient,kale2026stretched}.

\paragraph{Multiclass logistic regression with large step sizes.}
For separable multiclass data, \citet{soudry2018implicit} and \citet{ravi2024implicit} showed that, for a sufficiently small fixed step size, the GD iterates converge in direction to the multiclass maximum-margin solution.
\citet{emmanouilidis2026sgd} study multiclass cross-entropy with arbitrary constant step sizes for minibatch SGD.
They prove an expected best-iterate convergence guarantee of order $\widetilde{O}((1+\eta^2)/(\gamma^2\eta t))$, extending the corresponding binary result for SGD from~\citep[Thm.~4]{wu2024large}.
Additionally, they develop a stochastic notion of stability under which the iterates enter a stable loss sublevel with high probability and subsequently return to it after stochastic excursions.
Their bound on the time to first enter the stable region grows linearly with the step size.
Our results instead concern deterministic full-batch GD: in the first part, we obtain an explicit $\mathcal O(\eta/\gamma^2)$ bound on the transition time, and subsequently show that, for sufficiently large step sizes, there is no dependence on $\eta$.

\paragraph{Perceptrons.}
The perceptron was introduced by~\citet{rosenblatt1958perceptron}, and its finite convergence on linearly separable data was proved by~\citet{novikoff1962convergence}.
The standard algorithm is online, while batch variants aggregate the corrections from all currently misclassified examples~\citep[\S1.6]{haykin2009neural}.
For multiclass classification, \citet{crammer2003ultraconservative} analyzed online perceptron-type algorithms and proved margin-dependent mistake bounds under linear separability.

A connection between large-step GD and batch perceptron dynamics was established by~\citet{tyurin2025logistic} for binary logistic regression: as the step size tends to infinity, the rescaled GD iterates converge to those of a batch perceptron.
Our $\eta \to \infty$ dynamics~\eqref{eq:multiclass-infinity-system} extends this connection to multiclass cross-entropy. 
In our case, the limiting dynamics is a full-batch multiclass perceptron that sums the corrections over all training examples, with the correction for each example divided equally among all classes with the largest score. 
Additionally, we make this connection quantitative for finite step sizes: for sufficiently large $\eta$, the GD iterates shadow the multiclass perceptron dynamics up to its termination time.
Under suitable conditions on the step size, at this same iteration GD has either entered the stable region or achieved loss below any prescribed $\varepsilon > 0$.

In their work showing convergence for GD on Fenchel-Young losses with any step size, \cite{bao2026any} also use a perceptron-type inequality, but for the original GD iterates $w_t$ rather than for rescaled iterates.
Their lower bound on $\langle w_t, w^*\rangle$ is obtained by recursively expanding the GD update (similar to our Lemma~\ref{lemma:good-direction-progress}), and they use the \emph{split optimization technique} from~\citep{wu2024large} for upper bounding $\| w_t \|$.
In contrast, our proofs do not use this technique: see Lemma~\ref{lemma:iterate-squared-norm}.

\section{A general perceptron-like dynamical system} \label{section:general-system}

The proofs of our main results build on perceptron-type arguments.
To isolate the parts of these arguments that we will reuse later, we first introduce a more general dynamical system.

Let $X \in \reals^{n\times d}$ be the matrix with rows $x_i^\top$, and $Y \in \reals^{n \times K}$ the one-hot label matrix as in~\eqref{eq:one-hot-labels}. 
Consider a map $\Pi \colon \reals^{K \times d} \to \reals^{n \times K}$ (which may depend on $X$) such that for all $i \in [n]$ and $k \in [K]$,
\begin{align*}
    \Pi(Z)_{ik} \geq 0 
    \quad\quad \textrm{and} \quad\quad
    \sum_{k=1}^K \Pi(Z)_{ik} = 1.
\end{align*}
That is, each row $\Pi(Z)_i$ is a probability vector over the $K$ classes: $\Pi(Z)_{iy_i}$ is the mass assigned to the correct class, while $\sum_{k \neq y_i} \Pi(Z)_{ik} = 1 - \Pi(Z)_{iy_i}$ is the total mass assigned to the other (incorrect) classes.
Next, we define the map $H \colon \reals^{K \times d} \to \reals^{n \times K}$ as 
\begin{equation*}
    H(Z) \coloneqq \frac{1}{n} \big( Y - \Pi(Z) \big).
\end{equation*}
The $i$-th row of $H(Z)$ compares the true label of the $i$-th data point with the probability vector $\Pi(Z)_i$.
Throughout this section, we study the following \emph{general batched perceptron dynamics}:
\begin{align} \label{eq:general-system} \tag{GBPD}
    \ZZ{t+1} =
    \ZZ{t} + H(\ZZ{t})^\top X,
    \qquad
    \ZZ{0} = 0 \in \reals^{K \times d}.
\end{align}
Each update is a linear combination of the data points, weighted by the difference between the true labels $Y$ and the predicted probability vectors $\Pi(\ZZ{t})$, hence the connection to the perceptron.
Unlike the original online perceptron, which updates using one data point at a time,~\eqref{eq:general-system} aggregates the contributions of all data points in each iteration.

For any $Z \in \reals^{K \times d}$, define the \emph{average mistake mass} as
\begin{align} \label{eq:avg-mistake-def}
    S(Z) \coloneqq 
    \frac{1}{nK} \sum_{i=1}^n \sum_{k \in [K], k \neq y_i} \Pi(Z)_{ik} = 
    \frac{1}{nK} \sum_{i=1}^n (1 - \Pi(Z)_{i y_i}).
\end{align}
Along the trajectory of~\eqref{eq:general-system}, we write $\SSS{t} \coloneqq S(\ZZ{t})$ when convenient.

The next lemma gives the \emph{perceptron inequality} for this system.
We will later apply it for two particular choices of $\Pi$.
In both cases, after further bounding the second term on the left-hand side, we obtain an upper bound on the cumulative average mistake mass $\sum_{t=0}^{T-1} S(\ZZ{t})$.
\begin{lemma} \label{lemma:main-perceptron}
    Let $(\ZZ{t})_{t \geq 0}$ be the trajectory of~\eqref{eq:general-system}.
    Under~\ref{ass:multiclass1} and~\ref{ass:multiclass2}, we have
    \begin{align*}
        2 K \sum_{t=0}^{T-1} S(\ZZ{t}) + 2 \sum_{t=0}^{T-1} \langle \ZZ{t}, H(\ZZ{t})^\top X \rangle_\mathrm{F} \geq \gamma^2 K^2 \left( \sum_{t=0}^{T-1} S(\ZZ{t}) \right)^2
        \quad \textrm{ for all } T \geq 1.
    \end{align*}
\end{lemma}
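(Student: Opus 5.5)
The plan is the classical Novikoff argument: bound the alignment $\langle \ZZ{T}, W^* \rangle_\mathrm{F}$ from below and $\| \ZZ{T} \|_\mathrm{F}^2$ from above, then combine the two with Cauchy--Schwarz. Throughout, write $\Delta^{(t)} \coloneqq H(\ZZ{t})^\top X = \frac{1}{n}\sum_{i=1}^n (e_{y_i} - \Pi(\ZZ{t})_i)\, x_i^\top$, where $e_{y_i} \in \reals^K$ is the one-hot vector and $\Pi(\ZZ{t})_i$ is viewed as a column vector. Write also $m_i^{(t)} \coloneqq 1 - \Pi(\ZZ{t})_{iy_i}$, so that $\SSS{t} = \frac{1}{nK}\sum_i m_i^{(t)}$.

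\textbf{Alignment.} I will show that each step makes progress of at least $\gamma K \SSS{t}$ along $W^*$. Since $\sum_k \Pi(\ZZ{t})_{ik} = 1$, we can write
\begin{align*}
\langle \Delta^{(t)}, W^* \rangle_\mathrm{F} = \frac{1}{n}\sum_{i=1}^n \Bigl( \langle w^*_{y_i}, x_i\rangle - \sum_{k} \Pi(\ZZ{t})_{ik} \langle w^*_k, x_i\rangle \Bigr) = \frac{1}{n}\sum_{i=1}^n \sum_{k\neq y_i} \Pi(\ZZ{t})_{ik}\, \langle w^*_{y_i} - w^*_k, x_i\rangle.
\end{align*}
The weights $\Pi(\ZZ{t})_{ik}$ are nonnegative, so \ref{ass:multiclass2} bounds this from below by $\frac{\gamma}{n}\sum_i m_i^{(t)} = \gamma K \SSS{t}$. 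Telescoping from $\ZZ{0} = 0$ then gives $\langle \ZZ{T}, W^*\rangle_\mathrm{F} \geq \gamma K \sum_{t<T} \SSS{t}$. In particular, this quantity is nonnegative, which is needed later when squaring.

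\textbf{Norm.} Expanding the update gives $\|\ZZ{t+1}\|_\mathrm{F}^2 = \|\ZZ{t}\|_\mathrm{F}^2 + 2\langle \ZZ{t}, \Delta^{(t)}\rangle_\mathrm{F} + \|\Delta^{(t)}\|_\mathrm{F}^2$. The cross term is kept as is, since it is exactly the second term on the left-hand side of the claim. The step I expect to need the most care is the bound $\|\Delta^{(t)}\|_\mathrm{F}^2 \leq 2K\SSS{t}$; it must be linear in the mistake mass, not merely $O(1)$. I would argue in three steps.
\begin{enumerate}
\item By the triangle inequality and \ref{ass:multiclass1}, $\|\Delta^{(t)}\|_\mathrm{F} \leq \frac{1}{n}\sum_i \|e_{y_i} - \Pi(\ZZ{t})_i\|$.
\item For a probability vector $p$ and label $y$, $\|e_y - p\|^2 = (1-p_y)^2 + \sum_{k\neq y} p_k^2 \leq (1-p_y)^2 + \bigl(\sum_{k\neq y}p_k\bigr)^2 = 2(1-p_y)^2 \leq 2(1-p_y)$.
\item Jensen's inequality (convexity of the square) then gives $\|\Delta^{(t)}\|_\mathrm{F}^2 \leq \frac{1}{n}\sum_i 2 m_i^{(t)} = 2K \SSS{t}$.
\end{enumerate}
Summing over $t < T$ with $\ZZ{0} = 0$ yields $\|\ZZ{T}\|_\mathrm{F}^2 \leq 2K\sum_{t<T}\SSS{t} + 2\sum_{t<T}\langle \ZZ{t}, \Delta^{(t)}\rangle_\mathrm{F}$.

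\textbf{Conclusion.} By Cauchy--Schwarz and $\|W^*\|_\mathrm{F}\leq 1$, we have $0 \leq \langle \ZZ{T}, W^*\rangle_\mathrm{F} \leq \|\ZZ{T}\|_\mathrm{F}$. Squaring the alignment lower bound and chaining it with the norm upper bound gives
\begin{align*}
\gamma^2K^2\Bigl(\sum_{t<T}\SSS{t}\Bigr)^2 \leq \langle \ZZ{T}, W^*\rangle_\mathrm{F}^2 \leq \|\ZZ{T}\|_\mathrm{F}^2 \leq 2K\sum_{t<T}\SSS{t} + 2\sum_{t<T}\langle \ZZ{t}, \Delta^{(t)}\rangle_\mathrm{F},
\end{align*}
which is the claimed inequality for every $T \geq 1$.
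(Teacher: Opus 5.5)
Your proposal is correct and follows the paper's proof essentially step by step. It uses the same per-step alignment bound $\langle W^*, H(Z)^\top X\rangle_\mathrm{F} \geq \gamma K S(Z)$, the same expansion of $\|Z^{(t+1)}\|_\mathrm{F}^2$ with the cross term kept intact, the same estimate $\|e_{y}-p\|^2 \leq 2(1-p_y)^2 \leq 2(1-p_y)$, and the same Cauchy--Schwarz combination. The only cosmetic difference is how you bound $\|H(Z^{(t)})^\top X\|_\mathrm{F}^2$: you use the triangle inequality over the rank-one terms followed by scalar Jensen, whereas the paper applies Jensen row by row to $\frac{1}{n}\sum_i (Y_{ik}-\Pi_{ik})x_i$. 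Both give $2K S(Z^{(t)})$.
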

The rest of this section is devoted to the proof of Lemma~\ref{lemma:main-perceptron}.
The argument follows the standard perceptron strategy~\citep{novikoff1962convergence}: by recursively unrolling the~\eqref{eq:general-system} update rule, we first lower bound the inner product of the iterate with the separating matrix $W^*$ (Lemma~\ref{lemma:good-direction-progress}), and then upper bound the growth of $\| \ZZ{T} \|_\mathrm{F}$ (Lemma~\ref{lemma:iterate-squared-norm}).
The perceptron inequality then follows by combining these two estimates with the Cauchy--Schwarz inequality.

The next lemma shows that each update makes some progress in the direction of $W^*$.
\begin{lemma} \label{lemma:update-term-good-direction-progress}
    Under~\ref{ass:multiclass2}, the following holds:
    \begin{align*}
        \langle W^*, H(Z)^\top X \rangle_\mathrm{F} \geq
        \gamma K S(Z).
    \end{align*}
\end{lemma}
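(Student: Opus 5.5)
The proof is a direct computation: expand the Frobenius inner product row by row, use that each row of $\Pi(Z)$ is a probability vector to rewrite each term as a combination of score differences, and then apply the margin condition~\ref{ass:multiclass2}. No earlier lemma is needed beyond the definitions of $H$ and $S$ in~\eqref{eq:avg-mistake-def}.

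\textbf{Step 1: expand the inner product.} Since $H(Z)^\top X = \sum_{i=1}^n H(Z)_{i}^\top x_i^\top$, with $H(Z)_i$ the $i$th row viewed as a vector in $\reals^K$, the $k$th row of $H(Z)^\top X$ is $\sum_i H(Z)_{ik} x_i$. Therefore
\begin{align*}
    \langle W^*, H(Z)^\top X \rangle_\mathrm{F}
    = \sum_{k=1}^K \Bigl\langle w_k^*, \sum_{i=1}^n H(Z)_{ik} x_i \Bigr\rangle
    = \frac{1}{n} \sum_{i=1}^n \sum_{k=1}^K \bigl( Y_{ik} - \Pi(Z)_{ik} \bigr) \langle w_k^*, x_i \rangle .
\end{align*}

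\textbf{Step 2: rewrite using the simplex constraint.} For each $i$, the one-hot structure gives $\sum_k Y_{ik} \langle w_k^*, x_i\rangle = \langle w_{y_i}^*, x_i\rangle$. Since $\sum_k \Pi(Z)_{ik} = 1$, we may write $\langle w_{y_i}^*, x_i\rangle = \sum_k \Pi(Z)_{ik} \langle w_{y_i}^*, x_i\rangle$. The inner sum then becomes
\begin{align*}
    \sum_{k=1}^K \Pi(Z)_{ik} \bigl( \langle w_{y_i}^*, x_i\rangle - \langle w_k^*, x_i\rangle \bigr)
    = \sum_{k \neq y_i} \Pi(Z)_{ik} \bigl( \langle w_{y_i}^*, x_i\rangle - \langle w_k^*, x_i\rangle \bigr),
\end{align*}
because the $k = y_i$ term vanishes.

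\textbf{Step 3: apply the margin.} By~\ref{ass:multiclass2}, each bracket is at least $\gamma$. Since $\Pi(Z)_{ik} \geq 0$, the inner sum is at least $\gamma \sum_{k\neq y_i} \Pi(Z)_{ik} = \gamma (1 - \Pi(Z)_{iy_i})$. Summing over $i$ and dividing by $n$ gives
\begin{align*}
    \langle W^*, H(Z)^\top X \rangle_\mathrm{F} \geq \frac{\gamma}{n}\sum_{i=1}^n \bigl(1 - \Pi(Z)_{iy_i}\bigr) = \gamma K S(Z),
\end{align*}
by the definition~\eqref{eq:avg-mistake-def}. The factor $K$ simply undoes the $1/(nK)$ normalization in $S$.

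There is no real obstacle. The only step requiring care is Step~2: the simplex constraint is exactly what turns $Y - \Pi(Z)$ into nonnegative weights on the score gaps, so that the margin can be applied term by term. Note that assumption~\ref{ass:multiclass1} is not used here.
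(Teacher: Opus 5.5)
Your proof is correct and follows the paper's argument essentially step for step. You expand the Frobenius inner product, use the one-hot rows of $Y$ and the simplex constraint on $\Pi(Z)$ to rewrite it as $\frac{1}{n}\sum_i\sum_{k\neq y_i}\Pi(Z)_{ik}\langle w^*_{y_i}-w^*_k,x_i\rangle$, and then apply the margin from~\ref{ass:multiclass2} term by term. (The expression $H(Z)_i^\top x_i^\top$ in Step~1 only has the right shape if $H(Z)_i$ is a row vector, but your row-wise formula for $H(Z)^\top X$ is right, so nothing downstream is affected.)
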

\begin{proof}
    Using $H(Z) = \frac{1}{n}(Y - \Pi(Z))$, we rewrite
    \begin{align} \label{eq:W_Z_inner_extra_term}
        \langle W^*, H(Z)^\top X\rangle_\mathrm{F} = 
        \sum_{k=1}^K \sum_{i=1}^n \langle w_k^*, x_i \rangle (H(Z)^\top)_{ki} = 
        \frac{1}{n} \sum_{k=1}^K \sum_{i=1}^n \langle w_k^*, x_i \rangle \; (Y_{ik} - \Pi_{ik}),
    \end{align}
    where we used the notation $\Pi = \Pi(Z)$, since there is no risk of confusion.
    Recall from Eq.~\eqref{eq:one-hot-labels} that $Y_{ik} = 1$ if $k = y_i$ and $Y_{ik} = 0$ otherwise.
    Using $\sum_{k=1}^K \Pi_{ik} = 1$, we can continue:
    \begin{align*}
        \langle W^*, H(Z)^\top X\rangle_\mathrm{F}
        &= \frac{1}{n}  \sum_{i=1}^n \Bigg( \sum_{k=1}^K \langle w_k^*, x_i \rangle \; Y_{ik} -
        \sum_{k=1}^K \langle w_k^*, x_i \rangle \; \Pi_{ik} \Bigg) \\
        &= \frac{1}{n} \sum_{i=1}^n \Bigg( \langle w_{y_i}^*, x_i \rangle - \sum_{k=1}^K \langle w_k^*, x_i \rangle \; \Pi_{ik} \Bigg) \\ 
        &=  \frac{1}{n} \sum_{i=1}^n \Bigg( \sum_{k=1}^K \langle w_{y_i}^*, x_i \rangle \; \Pi_{ik} - \sum_{k=1}^K \langle w_k^*, x_i \rangle \; \Pi_{ik} \Bigg) \\ 
        &= \frac{1}{n} \sum_{i=1}^n \sum_{k \in [K], k \neq y_i} \Pi_{ik} \langle w_{y_i}^* - w_k^*, x_i \rangle \\
        &\geq \frac{\gamma}{n} \sum_{i=1}^n \sum_{k \in [K], k \neq y_i} \Pi_{ik} 
        = \gamma K S(Z),
    \end{align*}
    where in the last step we used~\ref{ass:multiclass2} and the definition of the average mistake mass from Eq.~\eqref{eq:avg-mistake-def}.
\end{proof}

Summing the progress along the entire trajectory yields the next result.
\begin{lemma} \label{lemma:good-direction-progress}
    Under~\ref{ass:multiclass2}, the following inequality holds for all $T \geq 1$:
    \begin{align*}
        \langle W^*, \ZZ{T} \rangle_\mathrm{F} \geq \gamma K \sum_{t=0}^{T-1} S(\ZZ{t}).
    \end{align*}
\end{lemma}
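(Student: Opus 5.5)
The plan is to telescope the (GBPD) update and then apply Lemma~\ref{lemma:update-term-good-direction-progress} to each increment. Since $\ZZ{0} = 0$ and $\ZZ{t+1} - \ZZ{t} = H(\ZZ{t})^\top X$ for every $t \geq 0$, unrolling the recursion gives
\begin{align*}
    \ZZ{T} = \sum_{t=0}^{T-1} H(\ZZ{t})^\top X \qquad \text{for all } T \geq 1.
\end{align*}

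Next, I take the Frobenius inner product of both sides with $W^*$. Bilinearity of $\langle \cdot, \cdot \rangle_\mathrm{F}$ then yields
\begin{align*}
    \langle W^*, \ZZ{T} \rangle_\mathrm{F} = \sum_{t=0}^{T-1} \langle W^*, H(\ZZ{t})^\top X \rangle_\mathrm{F}.
\end{align*}
Each summand is bounded below by $\gamma K S(\ZZ{t})$, by Lemma~\ref{lemma:update-term-good-direction-progress} applied with $Z = \ZZ{t}$. That lemma holds for every $Z \in \reals^{K \times d}$ under~\ref{ass:multiclass2}, since its proof only uses that $\Pi(Z)$ has nonnegative rows summing to one. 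Summing these bounds over $t = 0, \dots, T-1$ gives the claim.

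An equivalent route is induction on $T$. The base case $T = 1$ follows from $\langle W^*, \ZZ{1} \rangle_\mathrm{F} = \langle W^*, H(\ZZ{0})^\top X \rangle_\mathrm{F}$. The inductive step adds the one-step bound of Lemma~\ref{lemma:update-term-good-direction-progress} to the induction hypothesis.

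There is no real obstacle here. The only point needing care is that the one-step lemma is applied at the trajectory points $\ZZ{t}$, which is legitimate because it holds for arbitrary $Z$. Assumption~\ref{ass:multiclass1} is not needed.
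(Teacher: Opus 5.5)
Your proposal is correct and is essentially the paper's proof. The paper writes the one-step identity $\langle W^*, \ZZ{t+1} \rangle_\mathrm{F} = \langle W^*, \ZZ{t} \rangle_\mathrm{F} + \langle W^*, H(\ZZ{t})^\top X \rangle_\mathrm{F}$, bounds each increment by Lemma~\ref{lemma:update-term-good-direction-progress}, and sums over $t = 0, \dots, T-1$ using $\ZZ{0} = 0$, which is the same telescoping argument as yours.
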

\begin{proof}
    Using the matrix update of~\eqref{eq:general-system}, we obtain
    \begin{align} \label{eq:W_Z_inner_bound}
        \langle W^*, \ZZ{t+1}\rangle_\mathrm{F} = \langle W^*, \ZZ{t} \rangle_\mathrm{F} + \langle W^*, H(\ZZ{t})^\top X \rangle_\mathrm{F}.
    \end{align}
    Plugging in the bound from Lemma~\ref{lemma:update-term-good-direction-progress}, this becomes
    \begin{align*}
        \langle W^*, \ZZ{t+1} \rangle_\mathrm{F} \geq 
        \langle W^*, \ZZ{t} \rangle_\mathrm{F} + \gamma K \; S(\ZZ{t}).
    \end{align*}
    Summing this inequality over $t=0,\dots,T-1$, and using that $\ZZ{0} = 0$, gives the result.
\end{proof}

We next bound the growth of the squared norm of the iterate.
\begin{lemma} \label{lemma:iterate-squared-norm}
    Under~\ref{ass:multiclass1}, the squared norm of each iterate of~\eqref{eq:general-system} satisfies
    \begin{align*}
        \| \ZZ{T} \|_\mathrm{F}^2 \leq 
        2 K \sum_{t=0}^{T-1} S(\ZZ{t}) + 2 \sum_{t=0}^{T-1} \langle \ZZ{t}, H(\ZZ{t})^\top X \rangle_\mathrm{F}
        \quad\quad \textrm{ for all } T \geq 1 .
    \end{align*}
\end{lemma}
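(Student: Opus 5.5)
Expand the squared norm along the recursion: by the update rule of~\eqref{eq:general-system},
\begin{align*}
    \| \ZZ{t+1} \|_\mathrm{F}^2 = \| \ZZ{t} \|_\mathrm{F}^2 + 2 \langle \ZZ{t}, H(\ZZ{t})^\top X \rangle_\mathrm{F} + \| H(\ZZ{t})^\top X \|_\mathrm{F}^2 .
\end{align*}
Summing over $t = 0, \dots, T-1$ and using $\ZZ{0} = 0$, the cross terms give exactly the second sum in the claim. It therefore suffices to prove the per-step bound
\begin{align*}
    \| H(Z)^\top X \|_\mathrm{F}^2 \leq 2 K S(Z) \qquad \text{for all } Z \in \reals^{K \times d}.
\end{align*}

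To prove this bound, write the $k$th row of $H(Z)^\top X$ as $\frac{1}{n}\sum_{i=1}^n (Y_{ik} - \Pi_{ik}) x_i$, with $\Pi = \Pi(Z)$. By the triangle inequality and~\ref{ass:multiclass1}, its norm is at most $\frac{1}{n}\sum_i |Y_{ik} - \Pi_{ik}|$. By Jensen (convexity of the square),
\begin{align*}
    \| H(Z)^\top X \|_\mathrm{F}^2 \leq \frac{1}{n} \sum_{i=1}^n \sum_{k=1}^K (Y_{ik} - \Pi_{ik})^2 .
\end{align*}
For each $i$, let $m_i = 1 - \Pi_{iy_i} \in [0,1]$. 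Then
\begin{align*}
    \sum_k (Y_{ik} - \Pi_{ik})^2 = m_i^2 + \sum_{k \neq y_i} \Pi_{ik}^2 \leq m_i^2 + \Bigl(\sum_{k\neq y_i} \Pi_{ik}\Bigr)^2 = 2 m_i^2 \leq 2 m_i,
\end{align*}
using nonnegativity of the $\Pi_{ik}$ and $m_i \le 1$. Hence
\begin{align*}
    \| H(Z)^\top X \|_\mathrm{F}^2 \leq \frac{2}{n}\sum_i m_i = 2K S(Z)
\end{align*}
by~\eqref{eq:avg-mistake-def}. This completes the argument.

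The only delicate point is the order of the operations: the Jensen step over $i$ must be applied to each row separately, before summing over $k$, so that the normalization $\frac{1}{n}$ survives and yields $S(Z)$ rather than $nS(Z)$. The rest is direct bookkeeping.
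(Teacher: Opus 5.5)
Your proof is correct and follows the paper's argument: expand $\|Z^{(t+1)}\|_\mathrm{F}^2$, bound $\|H(Z)^\top X\|_\mathrm{F}^2 \le \frac{1}{n}\sum_{i,k}(Y_{ik}-\Pi_{ik})^2 \le 2KS(Z)$ via the same per-row estimate $2m_i^2 \le 2m_i$, and telescope from $Z^{(0)}=0$. The only cosmetic difference is that you use the triangle inequality followed by scalar Jensen, whereas the paper applies Jensen directly to $v \mapsto \|v\|^2$; also, the ``delicate'' ordering you flag is not actually needed, since Jensen applied to the whole matrix $\frac{1}{n}\sum_i (Y_{i:}-\Pi_{i:})^\top x_i^\top$ gives the same bound.
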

\begin{proof}
    Taking squared norms in~\eqref{eq:general-system}, we obtain
    \begin{align} \label{eq:Z_square_bound}
        \| \ZZ{t+1} \|_\mathrm{F}^2 = 
        \| \ZZ{t} \|_\mathrm{F}^2 + \| H(\ZZ{t})^\top X \|_\mathrm{F}^2 + 2 \langle \ZZ{t}, H(\ZZ{t})^\top X  \rangle_\mathrm{F}. 
    \end{align}
    Let us focus on the term $\| H(\ZZ{t})^\top X \|_\mathrm{F}^2$ first.
    Since $H(\ZZ{t}) = \frac{1}{n} (Y - \Pi(\ZZ{t}))$, we have 
    \begin{align} \label{eq:H_X_sq_norm_bound}
        \| H(\ZZ{t})^\top X \|_\mathrm{F}^2
        &= \sum_{k=1}^K \| (H(\ZZ{t})^\top X)_{k, :} \|^2 \notag \\ 
        &= \sum_{k=1}^K \left\| \frac{1}{n} \sum_{i=1}^n (Y_{ik} - \Pi(\ZZ{t})_{ik}) \; x_i \right\|^2 \notag \\ 
        &\leq \frac{1}{n} \sum_{k=1}^K \sum_{i=1}^n (Y_{ik} - \Pi(\ZZ{t})_{ik})^2 \; \|x_i\|^2 \notag \\ 
        &\leq \frac{1}{n} \sum_{i=1}^n \sum_{k=1}^K (Y_{ik} - \Pi(\ZZ{t})_{ik})^2,
    \end{align}
    where in the third step we applied Jensen's inequality to the function $v \mapsto \| v \|^2$, and in the last one we used~\ref{ass:multiclass1}.
    We now relate this quantity to the average mistake mass $S(\ZZ{t})$ from Eq.~\eqref{eq:avg-mistake-def}.
    To this end, first use that the rows of $Y$ are one-hot encodings for the labels $y_i$, then the fact that the rows of $\Pi(\ZZ{t})$ are probability vectors, so that $1 - \Pi(\ZZ{t})_{i y_i} = \sum_{k \in [K], k\neq y_i} \Pi(\ZZ{t})_{ik}$.
    Therefore,
    \begin{align*}
        \sum_{k=1}^K (Y_{ik} - \Pi(\ZZ{t})_{ik})^2
        &= (1 - \Pi(\ZZ{t})_{i y_i})^2 + \sum_{k \in [K], k \neq y_i} (\Pi(\ZZ{t})_{ik})^2 \\ 
        &= \left( \sum_{k \in [K], k \neq y_i} \Pi(\ZZ{t})_{ik} \right)^2 + \sum_{k \in [K], k \neq y_i} (\Pi(\ZZ{t})_{ik})^2 \\ 
        &\leq 2 \left( \sum_{k \in [K], k \neq y_i} \Pi(\ZZ{t})_{ik} \right)^2 \\ 
        &\leq 2 \sum_{k \in [K], k \neq y_i} \Pi(\ZZ{t})_{ik},
    \end{align*}
    where in the last step we used that $\sum_{k \in [K], k \neq y_i} \Pi(\ZZ{t})_{ik} \leq 1$.
    Plugging this in Eq.~\eqref{eq:H_X_sq_norm_bound} yields
    \begin{align*}
        \| H(\ZZ{t})^\top X \|_\mathrm{F}^2
        \leq \frac{2}{n} \sum_{i=1}^n \sum_{k \in [K], k \neq y_i} \Pi(\ZZ{t})_{ik}
        = 2 K S(\ZZ{t}),
    \end{align*}
    and with this, Eq.~\eqref{eq:Z_square_bound} becomes
    \begin{align*}
        \| \ZZ{t+1} \|_\mathrm{F}^2 \leq
        \| \ZZ{t} \|_\mathrm{F}^2 + 2 K S(\ZZ{t}) + 2 \langle \ZZ{t}, H(\ZZ{t})^\top X \rangle_\mathrm{F}.
    \end{align*}
    To conclude, unroll this from $t=T-1$ to $t=0$ and use $\ZZ{0} = 0$.
\end{proof}

The proof of Lemma~\ref{lemma:main-perceptron} is now immediate.
\begin{proof}[Proof of Lemma~\ref{lemma:main-perceptron}]
    By Lemma~\ref{lemma:good-direction-progress}, for all $T \geq 1$, we have
    \begin{align*}
        \langle W^*, \ZZ{T} \rangle_\mathrm{F} \geq \gamma K \sum_{t=0}^{T-1} S(\ZZ{t}).
    \end{align*}
    Since both sides of the inequality are nonnegative, we can square them.
    Via Cauchy--Schwarz and using $\| W^* \|_\mathrm{F} \leq 1$ (from~\ref{ass:multiclass2}), we conclude with Lemma~\ref{lemma:iterate-squared-norm} that
    \begin{align*}
        \gamma^2 K^2 \left( \sum_{t=0}^{T-1} S(\ZZ{t}) \right)^2 \leq
        \langle W^*, \ZZ{T} \rangle_\mathrm{F}^2 \leq 
        \| \ZZ{T} \|_\mathrm{F}^2 \leq 
        2 K \sum_{t=0}^{T-1} S(\ZZ{t}) + 2 \sum_{t=0}^{T-1} \langle \ZZ{t}, H(\ZZ{t})^\top X \rangle_\mathrm{F},
    \end{align*}
    which is the claimed inequality.
\end{proof}

\section{Two-phase convergence of large-step GD} \label{section:two-phase}

In this section, we prove a quantitative version of Theorem~\ref{thm:multiclass-two-phase-informal}, with explicit bounds on both the transition time to the stable region, and the value of the loss therein.

\begin{theorem} \label{thm:multiclass-two-phase}
    Suppose~\ref{ass:multiclass1} and~\ref{ass:multiclass2} hold.
    If we optimize $f$ as in~\eqref{eq:multiclass-loss} using GD initialized at $\WWeta{0} = 0$ with constant step size $\eta \geq \frac{n}{2}$, then:
    \begin{enumerate}[label=(\roman*)]
        \item (\textbf{Transition time.})
        The transition time to the stable region $\tau$~\eqref{eq:tau-def} satisfies
        \begin{align*}
            \tau \leq
            \frac{8}{\gamma^2} \left( \eta + \log\bigl( 4 \eta e (K-1) \bigr) \right).
        \end{align*}

        \item (\textbf{Loss decrease in the stable region.})
        For all $t \geq \tau$, the iterates remain in the stable region and the loss satisfies
        \begin{align*}
            f(\WWeta{t})
            \leq
            \frac{1}{ \frac{1}{f(\WWeta{\tau})} + \frac{\eta \gamma^2}{8}(t - \tau) } .
        \end{align*}
    \end{enumerate}
\end{theorem}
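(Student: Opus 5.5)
The plan is to recast GD as an instance of~\eqref{eq:general-system} and then invoke Lemma~\ref{lemma:main-perceptron}. Since $\nabla f(W) = \frac1n (P(W)-Y)^\top X$, the rescaled iterates $\ZZeta{t} = \WWeta{t}/\eta$ satisfy $\ZZeta{t+1} = \ZZeta{t} + H(\ZZeta{t})^\top X$. Here $\Pi(Z) = P(\eta Z)$ is the row-wise softmax of $\eta Z X^\top$, which is a valid choice of $\Pi$. Everything then reduces to two tasks: controlling the cross term $\langle \ZZeta{t}, H(\ZZeta{t})^\top X\rangle_\mathrm{F}$ by the mistake mass, and relating the mistake mass to the loss.

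\emph{Cross term.} Write $s_{ik} = \langle w_k, x_i\rangle$ for the GD scores. The same algebra as in Lemma~\ref{lemma:update-term-good-direction-progress} gives
\begin{align*}
    \langle Z, H(Z)^\top X\rangle_\mathrm{F} = \frac{1}{\eta n}\sum_{i=1}^n \sum_{k\neq y_i} P_{ik}\,(s_{iy_i}-s_{ik}).
\end{align*}
Since $P_{ik} \le \exp(s_{ik}-s_{iy_i})$, each positive summand obeys $P_{ik}(s_{iy_i}-s_{ik}) \le P_{ik}\log(1/P_{ik})$, and the negative summands can be dropped. Write $m_i = 1-P_{iy_i}$ for the mistake mass of point $i$. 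Concavity of $p\mapsto p\log(1/p)$ over the $K-1$ wrong classes then gives $\sum_{k\ne y_i} P_{ik}\log(1/P_{ik}) \le m_i \log\bigl((K-1)/m_i\bigr)$. Summing, and using concavity again in $m_i$, yields a bound of the form
\begin{align*}
    \langle \ZZeta{t}, H^\top X\rangle_\mathrm{F} \le \frac{K \SSS{t}}{\eta}\,\log\frac{e(K-1)}{K\SSS{t}}.
\end{align*}

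\emph{Transition time (part (i)).} For $t<\tau$ we have $f(\WWeta{t}) \ge 1/(2\eta)$. I will turn this into a lower bound on the mistake mass, $K\SSS{t} \ge c/\eta$ for an absolute constant $c$ (roughly $1/4$). The tool is $1-e^{-\ell}\ge \min(\ell,1)(1-1/e)$ with $\ell_i = -\log P_{iy_i}$, split into two cases: some single $\ell_i\ge 1$, or all $\ell_i<1$. The hypothesis $\eta\ge n/2$ is used in the first case: a single bad point then already contributes $\gtrsim 1/n \ge 1/(2\eta)$.

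This lower bound on $\SSS{t}$ caps the logarithm above at $\log(4\eta e(K-1))$ or similar. Plugging into Lemma~\ref{lemma:main-perceptron} with $T=\tau$ and dividing by $\sum_{t<\tau} \SSS{t}$ gives
\begin{align*}
    \gamma^2 K \sum_{t<\tau} \SSS{t} \le 2 + \frac{2}{\eta}\log\bigl(4\eta e(K-1)\bigr).
\end{align*}
Combining with $\sum_{t<\tau}\SSS{t} \ge \tau c/(\eta K)$ yields $\tau \le \frac{8}{\gamma^2}\bigl(\eta + \log(4\eta e(K-1))\bigr)$ after tracking constants. I expect this constant bookkeeping, especially the lower bound on $S$ in terms of $f$ and the handling of the logarithm, to be the main obstacle. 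If $\tau=\infty$, the same inequality applied for every $T$ gives a contradiction.

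\emph{Stable phase (part (ii)).} By Lemma~\ref{lemma:descent-lemma-if-stable}, once $f<1/(2\eta)$ the iterates stay in the stable region and satisfy $f(\WWeta{t+1}) \le f(\WWeta{t}) - \frac{\eta}{2}\|\nabla f(\WWeta{t})\|_\mathrm{F}^2$ (or a comparable constant). For the gradient lower bound, Cauchy--Schwarz with $W^*$ and Lemma~\ref{lemma:update-term-good-direction-progress} give
\begin{align*}
    \|\nabla f\|_\mathrm{F} \ge \langle W^*, -\nabla f\rangle_\mathrm{F} \ge \gamma K S = \frac{\gamma}{n}\sum_i (1-e^{-\ell_i}).
\end{align*}
In the stable region $f<1/(2\eta)\le 1/n$, so every $\ell_i<1$, hence $1-e^{-\ell_i}\ge \ell_i/2$ and $\|\nabla f\|_\mathrm{F}\ge \gamma f/2$. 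This gives $f_{t+1}\le f_t - \frac{\eta\gamma^2}{8}f_t^2$, which implies $1/f_{t+1} \ge 1/f_t + \eta\gamma^2/8$. Telescoping from $\tau$ gives the stated bound.
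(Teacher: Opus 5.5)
Your proposal is correct and follows essentially the same route as the paper. It recasts rescaled GD as \eqref{eq:general-system}, bounds the cross term by an entropy-type quantity, and uses $K S^{(t)} \geq 1/(4\eta)$ before $\tau$ to cap the logarithm and rearrange the perceptron inequality. It then combines the descent lemma with the $W^*$-based gradient lower bound $\|\nabla f\|_\mathrm{F} \geq \gamma f/2$ and telescopes reciprocals. Note that the descent lemma's hypothesis $\nabla^2 f \preceq 2fI$ is supplied by Lemma~\ref{lemma:cross-entropy-hessian-self-bound}, which you should cite.

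Your local variants are valid. Keeping only the wrong-class part of the entropy, in place of the full entropy of Lemmas~\ref{lemma:soft-perceptron-cross-term}--\ref{lemma:soft-perceptron-type-bound-refined}, even removes the factor $e$ inside the logarithm. Your case split on whether some $\ell_i \geq 1$ replaces the Weierstrass product inequality of Lemma~\ref{lemma:mistake-mass} and gives $K S^{(t)} \geq (1-e^{-1})/(2\eta)$, slightly better than the paper's $1/(4\eta)$.
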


Choosing the step size in terms of the iteration budget yields an accelerated $\mathcal O(1 / T^2)$ rate.
\begin{corollary} \label{cor:multiclass-accelerated-rate}
    Consider GD with step size $\eta > 0$ for the multiclass cross-entropy loss~\eqref{eq:multiclass-loss} under Assumptions~\ref{ass:multiclass1} and~\ref{ass:multiclass2} and take $\WWeta{0} = 0$.
    Assume the iteration budget satisfies
    \begin{align*}
        T \geq \frac{1}{\gamma^2} \max\left\{ 16n, 64 \log(4e(K-1)) \right\},
    \end{align*}
    and take $\eta = \frac{\gamma^2 T}{32}$.
    Then, the transition time (Eq.~\eqref{eq:tau-def}) satisfies $\tau \leq \frac{T}{2}$ and $f(\WWeta{T}) \leq \frac{512}{\gamma^4 T^2}$.
\end{corollary}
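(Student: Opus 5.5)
This follows directly from Theorem~\ref{thm:multiclass-two-phase} by substituting $\eta = \gamma^2 T/32$. The plan has three steps: check the step-size hypothesis, bound $\tau$ by part~(i), then bound $f(\WWeta{T})$ by part~(ii) using that at least $T/2$ iterations remain in the stable region.

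\textbf{Step 1 (hypothesis).} Theorem~\ref{thm:multiclass-two-phase} requires $\eta \geq n/2$. The budget assumption gives $T \geq 16n/\gamma^2$, so
\begin{align*}
    \eta = \frac{\gamma^2 T}{32} \geq \frac{16n}{32} = \frac{n}{2}.
\end{align*}

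\textbf{Step 2 (transition time).} By part~(i),
\begin{align*}
    \tau \leq \frac{8}{\gamma^2}\bigl(\eta + \log(4e(K-1)) + \log\eta\bigr),
\end{align*}
and we want this to be at most $T/2$. We split the right-hand side into three pieces and bound each by a fraction of $T$:
\begin{itemize}
    \item The $\eta$ term gives $\frac{8\eta}{\gamma^2} = \frac{T}{4}$.
    \item The constant term uses $T \geq 64\log(4e(K-1))/\gamma^2$, which gives $\frac{8\log(4e(K-1))}{\gamma^2} \leq \frac{T}{8}$.
    \item The remaining $\frac{8\log\eta}{\gamma^2}$ must be shown to be at most $\frac{T}{8}$, i.e.\ $\log\eta \leq \gamma^2 T/64 = \eta/2$. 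This holds for every $\eta>0$, since $\log x \leq x/e < x/2$.
\end{itemize}
Summing gives $\tau \leq T/4 + T/8 + T/8 = T/2$.

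\textbf{Step 3 (final loss).} Since $\tau \leq T/2 < T$, part~(ii) applies at $t = T$. Dropping the nonnegative term $1/f(\WWeta{\tau})$ and using $T - \tau \geq T/2$,
\begin{align*}
    f(\WWeta{T}) \leq \frac{8}{\eta\gamma^2 (T-\tau)} \leq \frac{16}{\eta\gamma^2 T} = \frac{16\cdot 32}{\gamma^4 T^2} = \frac{512}{\gamma^4 T^2}.
\end{align*}

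The only step that needs care is the $\log\eta$ term in Step~2, which the explicit budget condition does not cover directly. The inequality $\log\eta \leq \eta/2$ absorbs it, and its budget of $T/8$ is exactly what remains after the other two pieces.
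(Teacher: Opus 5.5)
Your proof is correct and follows the paper's argument essentially step for step. The paper likewise checks $\eta \geq n/2$, bounds both $\log(4e(K-1))$ and $\log\eta$ by $\eta/2$ (your two $T/8$ pieces are exactly these two bounds multiplied by $8/\gamma^2$) to get $\tau \leq 16\eta/\gamma^2 = T/2$, and then applies part (ii) with $T-\tau \geq T/2$.
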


To prove these results, we first derive the GD dynamics for the multiclass cross-entropy loss $f$.
In Section~\ref{subsection:soft-perceptron}, we show that, after a change of variables, the resulting dynamical system is a general batched perceptron~\eqref{eq:general-system}, and so we can apply the general results established in the previous section.
Combining this with a lower bound on the average mistake mass $\SSS{t}$ in Subsection~\ref{subsection:transition-time} yields the result about the transition time.
Then, in Section~\ref{subsection:loss-decay}, we use a separate descent argument in the stable region to prove the loss rate from the second part of Theorem~\ref{thm:multiclass-two-phase}, as well as the corollary.

\subsection{Rescaled dynamics: the soft perceptron} \label{subsection:soft-perceptron}

Fix a step size $\eta > 0$ and consider the GD iterates $\WWeta{t}$ optimizing the loss $f$ as in~\eqref{eq:multiclass-loss}, initialized at $\WWeta{0} = 0$.
A direct computation (Appendix~\ref{appendix:gd-dynamics-lemma-proof}) shows that the GD update takes the matrix form
\begin{align} \label{eq:gd-dynamics}
    \WWeta{t+1} = \WWeta{t} + \frac{\eta}{n} \big(Y - P(\WWeta{t}) \big)^\top X.
\end{align}
To apply the general perceptron argument from Section~\ref{section:general-system}, we define the \emph{rescaled iterates}
\begin{align*}
    \ZZeta{t} \coloneqq \frac{1}{\eta} \WWeta{t}.
\end{align*}
Dividing the GD update~\eqref{eq:gd-dynamics} by $\eta$ yields the following update rule for $\ZZeta{t}$:
\begin{align} \label{eq:multiclass-eta-system} \tag{$\mathrm{SP}_\eta$}
    \ZZeta{t+1} =
    \ZZeta{t} + \frac{1}{n} \big(Y - P(\eta \ZZeta{t})\big)^\top X,
    \qquad
    \ZZeta{0} = 0.
\end{align}
We call~\eqref{eq:multiclass-eta-system} the \emph{soft perceptron dynamics}.
Observe that this corresponds to~\eqref{eq:general-system} with
\begin{align*}
    \Pi(Z) = P(\eta Z),
    \qquad \textrm{and} \qquad
    H(Z) = \frac{1}{n}\bigl(Y-P(\eta Z)\bigr).
\end{align*}
For this choice of $\Pi$, the \emph{average mistake mass} at iteration $t$ is the total softmax probability assigned to the incorrect classes, which we denote by
\begin{align} \label{eq:avg-mistake-gd}
    \SSS{t} \coloneqq
    S(\ZZeta{t}) = 
    \frac{1}{nK} \sum_{i=1}^n \sum_{k \in [K], k \neq y_i} P(\eta \ZZeta{t} )_{ik}
\end{align}

\subsection{Specializing the perceptron bound to the~\eqref{eq:multiclass-eta-system} dynamics}

In this section, we specialize the perceptron-type bound in Lemma~\ref{lemma:main-perceptron} to the system~\eqref{eq:multiclass-eta-system}.
Recall that this bound controls the cumulative average mistake mass $\SSS{t}$ (defined in Eq.~\eqref{eq:avg-mistake-def}).
By further controlling the cross term $\langle \ZZeta{t}, H(\ZZeta{t})^\top X \rangle_\mathrm{F}$ through $\SSS{t}$, we obtain the following result.
\begin{lemma} \label{lemma:finite-eta-perceptron-inequality}
    Let $(\ZZeta{t})_{t \geq 0}$ be the trajectory of the soft perceptron dynamics~\eqref{eq:multiclass-eta-system}.
    Under Assumptions~\ref{ass:multiclass1} and~\ref{ass:multiclass2}, for every $T \geq 1$, we have
    \begin{align*}
        2 \sum_{t=0}^{T-1} \SSS{t}
        + \frac{2}{\eta} \sum_{t=0}^{T-1} \SSS{t} \log\left(\frac{e(K-1)}{K \SSS{t}} \right)
        \geq
        \gamma^2 K \left( \sum_{t=0}^{T-1} \SSS{t} \right)^2.
    \end{align*}
\end{lemma}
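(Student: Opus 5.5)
The plan is to start from Lemma~\ref{lemma:main-perceptron}, applied to~\eqref{eq:multiclass-eta-system} with $\Pi(Z)=P(\eta Z)$, and divide it by $K$. The claim then reduces to the pointwise estimate
\begin{align*}
    \langle \ZZeta{t}, H(\ZZeta{t})^\top X \rangle_\mathrm{F}
    \leq \frac{K}{\eta}\, \SSS{t} \log\left( \frac{e(K-1)}{K\SSS{t}} \right)
    \qquad \text{for every } t .
\end{align*}
Summing this over $t$ and substituting into Lemma~\ref{lemma:main-perceptron} gives exactly the stated inequality. So the entire work is in bounding this cross term. I expect the main obstacle to be identifying the right entropy structure in it.

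\textbf{Rewriting the cross term in softmax coordinates.} Fix $Z=\ZZeta{t}$ and write the scores as $s_{ik}=\eta\langle z_k,x_i\rangle$, with $p_{ik}=P(\eta Z)_{ik}$. Expanding as in the proof of Lemma~\ref{lemma:update-term-good-direction-progress} gives
\begin{align*}
    \langle Z, H(Z)^\top X\rangle_\mathrm{F}
    = \frac{1}{n\eta}\sum_{i=1}^n \Bigl( s_{iy_i} - \sum_{k} p_{ik} s_{ik} \Bigr).
\end{align*}
For the softmax, $s_{ik}=\log p_{ik}+\log\bigl(\sum_j e^{s_{ij}}\bigr)$. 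The log-partition term cancels because $\sum_k p_{ik}=1$. Hence each summand equals
\begin{align*}
    \log p_{iy_i}-\sum_k p_{ik}\log p_{ik}
    = (1-p_{iy_i})\log p_{iy_i} \;-\; \sum_{k\neq y_i} p_{ik}\log p_{ik}.
\end{align*}

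\textbf{Bounding each summand.} The first term is nonpositive, so I drop it. Let $m_i=1-p_{iy_i}=\sum_{k\neq y_i}p_{ik}$. Then $-\sum_{k\neq y_i}p_{ik}\log p_{ik}$ is $m_i$ times the entropy of the normalized vector $(p_{ik}/m_i)_{k\neq y_i}$, plus $-m_i\log m_i$. Since that vector has $K-1$ entries, its entropy is at most $\log(K-1)$. This gives the per-point bound
\begin{align*}
    s_{iy_i}-\sum_k p_{ik}s_{ik} \leq \varphi(m_i),
    \qquad \varphi(m)\coloneqq m\log\frac{K-1}{m},
\end{align*}
with $\varphi(0)=0$.

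\textbf{Averaging over the data.} The function $\varphi$ is concave on $[0,1]$. By Jensen's inequality,
\begin{align*}
    \frac1n\sum_{i=1}^n\varphi(m_i)\leq \varphi\Bigl(\frac1n\sum_{i=1}^n m_i\Bigr)=\varphi(K\SSS{t}),
\end{align*}
using the definition~\eqref{eq:avg-mistake-gd}. This yields
\begin{align*}
    \langle Z,H(Z)^\top X\rangle_\mathrm{F}\leq \frac{1}{\eta}\,K\SSS{t}\log\frac{K-1}{K\SSS{t}}
    \leq \frac{K}{\eta}\,\SSS{t}\log\frac{e(K-1)}{K\SSS{t}},
\end{align*}
where the extra factor $e$ inside the logarithm only adds slack. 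When $\SSS{t}=0$ both sides vanish by the convention $0\log(\cdot/0)=0$; for softmax with finite scores this case does not actually occur. Plugging this into the reduction in the first paragraph concludes the proof.
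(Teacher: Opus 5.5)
Your proof is correct and follows the paper's route: apply Lemma~\ref{lemma:main-perceptron} with $\Pi(Z)=P(\eta Z)$, rewrite the cross term through the softmax log-identity, split off the wrong-class mass $m_i$ and a conditional entropy over $K-1$ classes, and use Jensen. The only difference is that you keep $p_{iy_i}\log p_{iy_i}$ to cancel part of $\Ent(p_i)$ and drop only $(1-p_{iy_i})\log p_{iy_i}\leq 0$, whereas the paper discards all of $\log p_{iy_i}$ and then uses $-(1-m)\log(1-m)\leq m$, which is where its factor $e$ comes from; your sharper per-point bound $m_i\log\frac{K-1}{m_i}$ shows that this factor is pure slack.
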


To obtain this inequality, we combine Lemma~\ref{lemma:main-perceptron} with the next two results.
The first one bounds the cross term from Lemma~\ref{lemma:main-perceptron} by the average entropy of the softmax probabilities, and the second one further bounds this average entropy in terms of the average mistake mass $\SSS{t}$.
Substituting these bounds into the general perceptron-type inequality yields Lemma~\ref{lemma:finite-eta-perceptron-inequality}.

For a probability vector $p = (p_1,\ldots,p_K)$, we denote its entropy by $\Ent(p)$, defined as
\begin{align*}
    \Ent(p) \coloneqq -\sum_{k=1}^K p_k \log p_k.
\end{align*}
\begin{lemma} \label{lemma:soft-perceptron-cross-term}
    Let $(\ZZeta{t})_{t \geq 0}$ be the iterates generated by the soft perceptron dynamics~\eqref{eq:multiclass-eta-system}.
    Then,
    \begin{align*}
        \langle \ZZeta{t}, H(\ZZeta{t})^\top X \rangle_\mathrm{F}
        \leq \frac{1}{n\eta} \sum_{i=1}^n \Ent(P(\eta \ZZeta{t})_i)
        \quad\quad \textrm{ for all } t \geq 0,
    \end{align*}
    where $P(\eta \ZZeta{t})_i$ denotes the $i$-th row of $P(\eta \ZZeta{t})$.
\end{lemma}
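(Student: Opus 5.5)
We propose to expand the cross term row by row and recognize a Gibbs-type variational inequality. Write $Z = \ZZeta{t}$, $p_i = P(\eta Z)_i$, and let $s_i \in \reals^K$ be the score vector with entries $s_{ik} = \langle z_k, x_i\rangle$, where $z_k$ is the $k$-th row of $Z$. Exactly as in the computation in the proof of Lemma~\ref{lemma:update-term-good-direction-progress}, but with $W^*$ replaced by $Z$, we get
\begin{align*}
    \langle Z, H(Z)^\top X\rangle_\mathrm{F}
    = \frac{1}{n}\sum_{i=1}^n \sum_{k=1}^K s_{ik}\,(Y_{ik} - p_{ik})
    = \frac{1}{n}\sum_{i=1}^n \Bigl( s_{i y_i} - \langle p_i, s_i\rangle \Bigr).
\end{align*}
It therefore suffices to show, for each $i$, that $\eta\bigl(s_{iy_i} - \langle p_i, s_i\rangle\bigr) \leq \Ent(p_i)$.

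The key step uses the explicit softmax form $p_{ik} = \exp(\eta s_{ik})/\sum_j \exp(\eta s_{ij})$, which gives $\log p_{ik} = \eta s_{ik} - \log\sum_j e^{\eta s_{ij}}$. Averaging against $p_i$ yields
\begin{align*}
    -\Ent(p_i) = \eta\langle p_i, s_i\rangle - \log\sum_j e^{\eta s_{ij}}.
\end{align*}
Hence
\begin{align*}
    \Ent(p_i) + \eta\langle p_i, s_i\rangle = \log\sum_j e^{\eta s_{ij}} \geq \eta\, s_{iy_i},
\end{align*}
where the inequality holds because the log-sum-exp dominates each of its terms, in particular the $j = y_i$ term. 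Equivalently, $\log p_{iy_i} \leq 0$. Rearranging gives the per-row bound, and summing over $i$ and dividing by $n\eta$ gives the lemma.

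We do not expect a genuine obstacle. The only points requiring care are the sign and indexing conventions in the Frobenius-product expansion, which mirror the earlier computation. It is also worth noting that the bound is equivalent to the elementary fact $-\log p_{iy_i} \geq 0$: the identity above says precisely that $\eta\bigl(s_{iy_i} - \langle p_i, s_i\rangle\bigr) = \Ent(p_i) + \log p_{iy_i}$. This identity also shows the slack in the bound is exactly $-\log p_{iy_i}$, which is small precisely when point $i$ is confidently classified.
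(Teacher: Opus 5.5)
Your proof is correct and is essentially the paper's argument. The paper also expands the cross term as $\frac{1}{n}\sum_{i}\sum_k P_{ik}\langle z_{y_i}-z_k, x_i\rangle$ and bounds each $\eta\langle z_{y_i}-z_k,x_i\rangle$ by $-\log P_{ik}$, discarding the nonnegative term $\log\bigl(1+\sum_{l\neq y_i} e^{\eta\langle z_l - z_{y_i}, x_i\rangle}\bigr) = -\log P_{iy_i}$, which is exactly your slack; the only cosmetic difference is that you average against $p_i$ before using $\log p_{iy_i}\le 0$, whereas the paper applies it termwise in $k$ and then averages.
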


\begin{proof}
    Letting $P = P(\eta \ZZeta{t})$, we have $H(\ZZeta{t}) = \frac{1}{n}(Y-P)$, and so
    \begin{align} \label{eq:cross-term}
        \langle \ZZeta{t}, H(\ZZeta{t})^\top X \rangle_\mathrm{F} =
        \frac{1}{n} \langle \ZZeta{t} X^\top, (Y - P)^\top \rangle_\mathrm{F} = 
        \frac{1}{n} \sum_{i=1}^n \sum_{k=1}^K \langle \Zzeta{t}_k, x_i \rangle (Y - P)_{ik}.
    \end{align}
    Using that for all $i \in [n]$ the one-hot matrix satisfies $Y_{i y_i} = 1$ and $Y_{ik} = 0$ for all $k \neq y_i$, and also using that each row of $P$ is a probability vector, we develop this further
    \begin{align} \label{eq:Z_HX_inner_soft}
        \sum_{i=1}^n \sum_{k=1}^K \langle \Zzeta{t}_k, x_i \rangle (Y - P)_{ik}
        &= \sum_{i=1}^n \sum_{k=1}^K \langle \Zzeta{t}_k, x_i \rangle Y_{ik} - 
            \sum_{i=1}^n \sum_{k=1}^K \langle \Zzeta{t}_k, x_i \rangle P_{ik} \notag \\ 
        &= \sum_{i=1}^n \langle \Zzeta{t}_{y_i}, x_i \rangle - 
        \sum_{i=1}^n \sum_{k=1}^K \langle \Zzeta{t}_k, x_i \rangle P_{ik} \notag \\
        &= \sum_{i=1}^n \sum_{k=1}^K \langle \Zzeta{t}_{y_i}, x_i \rangle P_{ik} - 
        \sum_{i=1}^n \sum_{k=1}^K \langle \Zzeta{t}_k, x_i \rangle P_{ik} \notag \\
        &= \sum_{i=1}^n \sum_{k=1}^K \langle \Zzeta{t}_{y_i} - \Zzeta{t}_k, x_i \rangle P_{ik}.
    \end{align}
    Recall from the definition of $P_{ik} = P(\eta \ZZeta{t})_{ik}$ (Eq.~\eqref{eq:multiclass-loss}) that
    \begin{align*}
        P_{ik}
        = \frac{\exp(\eta \langle \Zzeta{t}_k, x_i \rangle)}{\sum_{l=1}^K \exp(\eta \langle \Zzeta{t}_l, x_i \rangle)}
        = \frac{\exp(\eta \langle \Zzeta{t}_k - \Zzeta{t}_{y_i}, x_i \rangle)}{\sum_{l=1}^K \exp(\eta \langle \Zzeta{t}_l - \Zzeta{t}_{y_i}, x_i \rangle)}.
    \end{align*}
    Taking logs and rearranging, we obtain 
    \begin{align*}
        \eta \langle \Zzeta{t}_{y_i} - \Zzeta{t}_k, x_i \rangle 
        &= - \log(P_{ik}) - \log\left( \sum_{l=1}^K \exp(\eta \langle \Zzeta{t}_l - \Zzeta{t}_{y_i}, x_i \rangle) \right) \\
        &= - \log(P_{ik}) - \log\left( 1 + \sum_{l \neq y_i} \exp(\eta \langle \Zzeta{t}_l - \Zzeta{t}_{y_i}, x_i \rangle) \right) \\
        &\leq - \log(P_{ik}).
    \end{align*}
    Combining this with Eqs.~\eqref{eq:cross-term} and~\eqref{eq:Z_HX_inner_soft}, we obtain
    \begin{align*}
        \langle \ZZeta{t}, H(\ZZeta{t})^\top X \rangle_\mathrm{F}
        &= \frac{1}{n \eta} \sum_{i=1}^n \sum_{k=1}^K
        P_{ik} \, \eta \langle \Zzeta{t}_{y_i} - \Zzeta{t}_k, x_i \rangle \\
        &\leq - \frac{1}{n \eta} \sum_{i=1}^n \sum_{k=1}^K P_{ik} \log(P_{ik}) 
        = \frac{1}{n \eta} \sum_{i=1}^n \Ent( P(\eta \ZZeta{t})_i),
    \end{align*}
    as claimed.
\end{proof}

The previous lemma bounds the cross term by the average entropy of the softmax probabilities.
We next control this entropy in terms of the average mistake mass $\SSS{t}$.

\begin{lemma} \label{lemma:soft-perceptron-type-bound-refined}
    For every $t \geq 0$, the average entropy of the softmax probabilities satisfies
    \begin{align*}
        \frac{1}{n} \sum_{i=1}^n \Ent(P(\eta \ZZeta{t})_i) \leq
        K \SSS{t} \, \log\left( \frac{e (K-1)}{K \SSS{t}} \right).
    \end{align*}
\end{lemma}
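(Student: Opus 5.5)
Fix $t$ and write $p^{(i)} = P(\eta \ZZeta{t})_i$ and $m_i = 1 - p^{(i)}_{y_i} \in [0,1]$ for the mistake mass of point $i$. By Eq.~\eqref{eq:avg-mistake-gd}, $\frac{1}{n}\sum_i m_i = K\SSS{t}$. The plan has two steps. First, bound each row entropy by a function of $m_i$ alone. Second, use concavity of that function to pass from the average of $m_i$ to the claimed expression.

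\textbf{Per-row bound.} Use the grouping (chain) rule for entropy, splitting $p^{(i)}$ into the correct class and the remaining $K-1$ classes:
\begin{align*}
    \Ent(p^{(i)}) = h(m_i) + m_i \, \Ent(q^{(i)}),
\end{align*}
where $h(m) = -m\log m - (1-m)\log(1-m)$ is the binary entropy and $q^{(i)}$ is the conditional distribution on the $K-1$ wrong classes. If $m_i = 0$, the second term is absent. Since $\Ent(q^{(i)}) \le \log(K-1)$, and using the elementary bound $-(1-m)\log(1-m) \le m$ (from $\log(1-m)\ge -m/(1-m)$), we get
\begin{align*}
    \Ent(p^{(i)}) \le \phi(m_i) \coloneqq m_i \log\frac{e(K-1)}{m_i},
\end{align*}
with the convention $\phi(0)=0$.

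\textbf{Averaging.} The function $\phi(m) = m\log(e(K-1)) - m\log m$ is concave on $[0,\infty)$, being a linear term plus $-m\log m$. By Jensen's inequality,
\begin{align*}
    \frac{1}{n}\sum_i \phi(m_i) \le \phi\Big(\frac1n\sum_i m_i\Big) = \phi(K\SSS{t}) = K\SSS{t}\log\frac{e(K-1)}{K\SSS{t}},
\end{align*}
which is the claim. In the edge case $\SSS{t}=0$, both sides are zero, consistent with the convention.

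\textbf{Main obstacle.} The main care point is the per-row bound. One must check the inequality $-(1-m)\log(1-m)\le m$ on all of $[0,1]$, including the limits at $m = 0$ and $m = 1$, and handle $K=2$, where $\log(K-1)=0$ and only the binary-entropy part survives. Concavity of $\phi$ and Jensen's inequality are routine once this is in place.
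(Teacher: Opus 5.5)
Your proof is correct and is essentially the paper's argument. It uses the same decomposition $\Ent(P_i)=h(m_i)+m_i\Ent(\tilde P_i)$, the same elementary bounds $\Ent(\tilde P_i)\le\log(K-1)$ and $-(1-m)\log(1-m)\le m$, and Jensen's inequality. The only difference is the order of steps: the paper applies Jensen to $h$ and then bounds $h(p)\le p\log(e/p)$ at the averaged point, while you bound each row first and apply Jensen to the concave function $\phi(m)=m\log\bigl(e(K-1)/m\bigr)$, which works equally well.
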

\begin{proof}
    Let $P = P(\eta \ZZeta{t})$.
    In what follows, we drop the superscript $t$ from all notation.
    Define
    \begin{align*}
        m_i \coloneqq
        1 - P_{i y_i} =
        \sum_{k \neq y_i} P_{ik} \in (0, 1)
        \quad \textrm{ and } \quad
        \tilde P_{ik} \coloneqq \frac{P_{ik}}{m_{i}} \textrm{ for all } k \neq y_i.
    \end{align*}
    Observe that $\tilde P_i$ is a probability vector, since its elements are non-negative and $\sum_{k \neq y_i} \tilde P_{ik} = 1$.
    Then, we decompose the entropy as
    \begin{align*}
        \Ent(P_i)
        &= - P_{i y_i} \log(P_{i y_i}) - \sum_{k \neq y_i} P_{ik} \log(P_{ik}) \\
        &= - P_{i y_i} \log(P_{i y_i}) - \sum_{k \neq y_i} m_i \tilde P_{ik} \log(m_i \tilde P_{ik}) \\
        &= -(1-m_i) \log(1-m_i) - m_i \left( \sum_{k \neq y_i} \tilde P_{ik} \log(m_i) + \sum_{k \neq y_i} \tilde P_{ik} \log(\tilde P_{ik})\right) \\
        &= -(1-m_i) \log(1-m_i) - m_i \log(m_i) + m_i \Ent(\tilde P_i) \\
        &= h(m_i) + m_i \Ent(\tilde P_i)
    \end{align*}
    where we defined the binary entropy $h \colon [0, 1] \to \reals$ as $h(p) = -p\log(p) - (1-p)\log(1-p)$.
    This gives
    \begin{align} \label{eq:entropy-sum}
        \frac{1}{n} \sum_{i=1}^n \Ent(P_i) = 
        \frac{1}{n} \sum_{i=1}^n h(m_i) + \frac{1}{n} \sum_{i=1}^n m_i \Ent(\tilde P_i).
    \end{align}
    Recall from Eq.~\eqref{eq:avg-mistake-gd} that $S(Z) = \frac{1}{nK} \sum_{i=1}^n \sum_{k \neq y_i} P_{ik} = \frac{1}{nK} \sum_{i=1}^n m_i$.
    Since $h$ is concave, applying Jensen's inequality we obtain
    \begin{align*}
        \frac{1}{n} \sum_{i=1}^n h(m_i) \leq 
        h \left( \frac{1}{n} \sum_{i=1}^n m_i \right) = 
        h \left( K S(Z) \right).
    \end{align*}
    Thus, plugging this back in Eq.~\eqref{eq:entropy-sum} we have shown that
    \begin{align*}
        \frac{1}{n} \sum_{i=1}^n \Ent(P_i) \leq
        h(K S(Z)) + \frac{1}{n} \sum_{i=1}^n m_i \Ent(\tilde P_i).
    \end{align*}
    As $\tilde P_i$ is a discrete random variable with $K-1$ possible outcomes, its entropy is bounded above as $\Ent(\tilde P_i) \leq \log(K-1)$, and using again that $\frac{1}{n} \sum_{i=1}^n m_i = K S(Z)$, we get
    \begin{align} \label{eq:entropy-sum-refined}
        \frac{1}{n} \sum_{i=1}^n \Ent(P_i)
        &\leq h(K S(Z)) + \log(K-1) \frac{1}{n} \sum_{i=1}^n m_i \notag \\
        &= h(K S(Z)) + \log(K-1) K S(Z).
    \end{align}
    To obtain the final result, it remains to bound $h(K S(Z))$.
    Using that $-(1-p) \log(1-p) \leq p$ for all $p \in [0, 1]$, we obtain
    \begin{align*}
        h(p)
        = -p\log(p) - (1-p)\log(1-p) 
        \leq -p\log(p) + p 
        = p \log\left( \frac{e}{p} \right).
    \end{align*}
    Taking $p = K S(Z) \in [0, 1]$ and plugging the result back in Eq.~\eqref{eq:entropy-sum-refined}, we conclude
    \begin{align*}
        \frac{1}{n} \sum_{i=1}^n \Ent(P_i) \leq 
        K S(Z) \log\left( \frac{e(K-1)}{K S(Z)} \right).
    \end{align*}
    This proves Lemma~\ref{lemma:soft-perceptron-type-bound-refined}.
\end{proof}

We can now combine these results to prove Lemma~\ref{lemma:finite-eta-perceptron-inequality}.
\begin{proof}[Proof of Lemma~\ref{lemma:finite-eta-perceptron-inequality}]
    Applying Lemma~\ref{lemma:main-perceptron} with $\Pi(Z) = P(\eta Z)$ gives
    \begin{align*}
        2 K \sum_{t=0}^{T-1}\SSS{t} + 2 \sum_{t=0}^{T-1} \langle \ZZeta{t},H(\ZZeta{t})^\top X\rangle_\mathrm{F} \geq
        \gamma^2 K^2 \left(\sum_{t=0}^{T-1} \SSS{t} \right)^2.
    \end{align*}
    By Lemmas~\ref{lemma:soft-perceptron-cross-term} and~\ref{lemma:soft-perceptron-type-bound-refined},
    \begin{align*}
        \langle \ZZeta{t}, H(\ZZeta{t})^\top X \rangle_\mathrm{F} \leq
        \frac{K}{\eta} \SSS{t} \log\left( \frac{e(K-1)}{K\SSS{t}} \right).
    \end{align*}
    Substituting this bound into the previous inequality and dividing by $K$ gives the result.
\end{proof}

\subsection{Transition to the stable region} \label{subsection:transition-time}

We now prove the first part of Theorem~\ref{thm:multiclass-two-phase}.
The key ingredient is a lower bound on the average mistake mass $\SSS{t}$ while the iterates are in the unstable region.
Combined with Lemma~\ref{lemma:finite-eta-perceptron-inequality}, this yields an upper bound on the length of the unstable phase.

We begin with a general lower bound on the average mistake mass in terms of the loss.
The result is the multiclass analogue of Lemma~C.1 in~\citep{crawshaw2026tight}, although our proof uses a different argument.
\begin{lemma} \label{lemma:mistake-mass}
    For every $Z \in \reals^{K \times d}$, the average mistake mass (Eq.~\eqref{eq:avg-mistake-gd}) satisfies
    \begin{align*}
        S(Z) \geq
        \frac{1-\exp(-nf(\eta Z))}{nK}.
    \end{align*}
\end{lemma}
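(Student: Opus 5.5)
We need to show $nK\,S(Z) \ge 1-\exp(-n f(\eta Z))$. Write $p_i \coloneqq P(\eta Z)_{iy_i}\in(0,1)$. The definition \eqref{eq:avg-mistake-gd} gives
\[
nK\,S(Z)=\sum_{i=1}^n (1-p_i).
\]
The loss~\eqref{eq:multiclass-loss} evaluated at $\eta Z$ gives
\[
n f(\eta Z) = -\sum_{i=1}^n \log p_i .
\]
Hence $\exp(-nf(\eta Z))=\prod_{i=1}^n p_i$, and the claim is equivalent to the elementary inequality
\begin{align*}
    \sum_{i=1}^n (1-p_i) \;\geq\; 1-\prod_{i=1}^n p_i
    \qquad \text{for all } p_1,\dots,p_n\in[0,1].
\end{align*}
So the plan is to perform this rewriting and then prove the product inequality.

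For the product inequality I would use induction on $n$. The base case $n=1$ is an equality. For the inductive step, set $Q\coloneqq\prod_{i=1}^{n-1}p_i\in[0,1]$. Then
\[
1-Qp_n = (1-Q) + Q(1-p_n) \leq (1-Q) + (1-p_n),
\]
because $Q\le 1$ and $1-p_n\ge 0$. Applying the induction hypothesis to $1-Q$ then yields the claim.

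An equivalent probabilistic reading is also available. Take independent events $A_i$ with $\Pr(A_i)=p_i$. Then $1-\prod_i p_i=\Pr\bigl(\bigcup_i A_i^c\bigr)$, and the union bound gives $\Pr\bigl(\bigcup_i A_i^c\bigr)\le\sum_i(1-p_i)$.

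There is no real obstacle here. The only points requiring care are the correct identification of $f(\eta Z)$ with $-\frac1n\sum_i\log p_i$ (using the one-hot structure of $Y$), and the observation that each $p_i$ lies in $[0,1]$, which is needed in the inductive step.
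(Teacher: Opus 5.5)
Your proposal is correct and matches the paper's proof: both rewrite $\exp(-nf(\eta Z))$ as $\prod_i P(\eta Z)_{iy_i}$ and $nK\,S(Z)$ as $\sum_i (1-P(\eta Z)_{iy_i})$, then apply the Weierstrass product inequality $\prod_i (1-m_i) \geq 1-\sum_i m_i$. The only difference is that you prove this inequality (by induction, or via the union bound) rather than citing it.
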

\begin{proof}
    Let $P = P(\eta Z)$, and define $l_i \coloneqq - \log(P_{iy_i})$ and $m_i \coloneqq \sum_{k \neq y_i} P_{ik}$.
    Recall that $f(\eta Z) = \frac{1}{n} \sum_{i=1}^n l_i$, and so
    \begin{align} \label{eq:exp_nF}
        \exp(-n f(\eta Z)) = 
        \prod_{i=1}^n \exp(-l_i) = 
        \prod_{i=1}^n P_{i y_i} = 
        \prod_{i=1}^n (1 - m_i).
    \end{align}
    Since $S(Z) = \frac{1}{nK} \sum_{i=1}^n \sum_{k \neq y_i} P_{ik} = \frac{1}{nK} \sum_{i=1}^n m_i$, the goal is to relate the above product to the sum $\sum_{i=1}^n m_i$.
    To do this, we use the Weierstrass product inequality, which gives that since $m_i \in [0, 1]$, $\prod_{i=1}^n (1-m_i) \geq 1 - \sum_{i=1}^n m_i$.
    Thus, we obtain
    \begin{align*}
        S(Z)
        = \frac{1}{nK} \sum_{i=1}^n m_i
        \geq \frac{1}{nK} \left( 1 - \prod_{i=1}^n (1 - m_i) \right)
        = \frac{1}{nK} \left( 1 - \exp(-n f(\eta Z)) \right),
    \end{align*}
    where the last step follows from Eq.~\eqref{eq:exp_nF}
\end{proof}

When the iterate is in the unstable region, $\SSS{t}$ can be lower bounded further, as follows.
\begin{lemma} \label{lemma:large-f-implies-mistake-mass}
    Assume that $f(\eta Z) \geq \frac{1}{2 \eta}$.
    Then, $S(Z) \geq \frac{1}{nK} (1 - \exp(-\frac{n}{2\eta})) $.
    Additionally, if $\eta \geq n / 2$, then $ S(Z) \geq \frac{1}{4K \eta}$.
\end{lemma}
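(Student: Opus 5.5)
The plan is to combine Lemma~\ref{lemma:mistake-mass} with the monotonicity of $u \mapsto 1-\exp(-u)$, and then use an elementary lower bound on $1-e^{-x}$ for $x \in [0,1]$.

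For the first claim, Lemma~\ref{lemma:mistake-mass} gives
\begin{align*}
    S(Z) \geq \frac{1-\exp(-nf(\eta Z))}{nK}.
\end{align*}
The map $u \mapsto 1-\exp(-u)$ is nondecreasing. By assumption $nf(\eta Z) \geq \frac{n}{2\eta}$, so the right-hand side is at least $\frac{1}{nK}\bigl(1-\exp(-\frac{n}{2\eta})\bigr)$, which is the first inequality.

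For the second claim, set $x \coloneqq \frac{n}{2\eta}$. The hypothesis $\eta \geq n/2$ means exactly that $x \in (0,1]$. On $[0,1]$ the function $x\mapsto 1-e^{-x}$ is concave, so it lies above its chord from $0$ to $1$:
\begin{align*}
    1-e^{-x} \geq (1-e^{-1})\,x \geq \frac{x}{2}.
\end{align*}
The last step uses $1-e^{-1}\approx 0.632 \geq 1/2$. Alternatively, $e^{-x}\leq 1-x+x^2/2$ gives $1-e^{-x}\geq x(1-x/2)\geq x/2$ for $x\leq 1$.

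Substituting into the first claim yields
\begin{align*}
    S(Z) \geq \frac{1}{nK}\cdot\frac{n}{4\eta} = \frac{1}{4K\eta},
\end{align*}
as claimed.

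The only step that needs any care is the elementary inequality $1-e^{-x}\geq x/2$ on $[0,1]$. This is precisely where the condition $\eta\geq n/2$ enters: it keeps $x \leq 1$, so the linear lower bound holds with constant $1/2$. Everything else is direct substitution into Lemma~\ref{lemma:mistake-mass}.
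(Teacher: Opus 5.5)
Your proposal is correct and follows the paper's proof essentially verbatim: you plug the hypothesis $f(\eta Z)\geq \frac{1}{2\eta}$ into Lemma~\ref{lemma:mistake-mass} using monotonicity of $u\mapsto 1-e^{-u}$, then apply $1-e^{-x}\geq x/2$ on $[0,1]$ with $x=\frac{n}{2\eta}$. Your justification of that elementary inequality, via the chord of a concave function or the bound $e^{-x}\leq 1-x+x^2/2$, is valid and simply fills in a step the paper states without proof.
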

\begin{proof}
    First, plugging $f(\eta Z) \geq \frac{1}{2\eta}$ into Lemma~\ref{lemma:mistake-mass} gives
    \begin{align*}
        S(Z) \geq
        \frac{1}{nK} \left( 1 - \exp\left( -\frac{n}{2\eta} \right) \right).
    \end{align*}
    Next, assume $\eta \geq n/2$, so that $n / (2\eta) \in [0, 1]$.
    Using $1 - e^{-x} \geq x/2$ for $x \in [0, 1]$, we conclude that $S(Z) \geq \frac{1}{4 K \eta}$.
\end{proof}

Combining the preceding lemmas yields the bound on the time required to enter the stable region.
\begin{proof}[Proof of part (i) of Theorem~\ref{thm:multiclass-two-phase}.]
    Let
    \begin{align*}
        \tau_{\mathrm{ub}}
        \coloneqq
        \frac{8 \left( \eta + \log\left( 4e(K-1)\eta \right) \right)}{\gamma^2}.
    \end{align*}
    Suppose, for contradiction, that $\tau > \tau_{\mathrm{ub}}$, and let $ T \coloneqq \lfloor \tau_{\mathrm{ub}} \rfloor + 1 $.
    Then $T \leq \tau$.
    We apply Lemma~\ref{lemma:finite-eta-perceptron-inequality} with this $T$, which gives
    \begin{align} \label{eq:soft-perceptron-bound-nice}
        2 \sum_{t=0}^{T-1} \SSS{t}
        + \frac{2}{\eta}
        \sum_{t=0}^{T-1}  \SSS{t} \log\left( \frac{e (K-1)}{K \SSS{t}}\right)
        \geq
        \gamma^2 K
        \left(
            \sum_{t=0}^{T-1} \SSS{t}
        \right)^2.
    \end{align}
    For all $t \in \{0, \dots, T-1\}$, since $T \leq \tau$, from the definition of $\tau$ we know that $f(\WWeta{t}) \geq \frac{1}{2\eta}$.
    For each such $t$, Lemma~\ref{lemma:large-f-implies-mistake-mass} gives $\SSS{t} \geq \frac{1}{4K\eta}$.
    This implies that $ \log\left( \frac{e (K-1)}{K \SSS{t}}\right) \leq \log\left( 4e(K-1)\eta \right)$, so plugging this in Eq.~\eqref{eq:soft-perceptron-bound-nice} we obtain
    \begin{align*}
        2 \sum_{t=0}^{T-1} \SSS{t} + \frac{2}{\eta} \log\left( 4e(K-1)\eta \right) \sum_{t=0}^{T-1}  \SSS{t} \geq
        \gamma^2 K \left( \sum_{t=0}^{T-1} \SSS{t} \right)^2 .
    \end{align*}
    After rearranging and using again $\SSS{t} \geq \frac{1}{4 K \eta}$ for all $0 \leq t \leq T-1$, we further obtain
    \begin{align*}
        \frac{1}{\gamma^2 K} \left(2 + \frac{2}{\eta} \log\left( 4e(K-1)\eta \right) \right)
        \geq \sum_{t=0}^{T-1} \SSS{t}
        \geq \frac{T}{4 K \eta}.
    \end{align*}
    This shows that $T$ satisfies 
    \begin{align*}
        T \leq 
        \frac{4 \eta}{\gamma^2} \left(2 + \frac{2}{\eta} \log\left( 4e(K-1)\eta \right) \right) = 
        \frac{8 \left( \eta + \log\left( 4e(K-1)\eta \right) \right)}{\gamma^2}
        = \tau_{\mathrm{ub}},
    \end{align*}
    which contradicts $T = \lfloor \tau_{\mathrm{ub}} \rfloor + 1 > \tau_{\mathrm{ub}}$.
    Therefore, $\tau \leq \tau_{\mathrm{ub}}$, as claimed in Theorem~\ref{thm:multiclass-two-phase}, part (i).   
\end{proof}

\subsection{Loss decay in the stable region} \label{subsection:loss-decay}

This section proves part (ii) of Theorem~\ref{thm:multiclass-two-phase}, following the argument from the proof of Theorem~2.1 in~\citet{crawshaw2026tight}.

By definition, $\WWeta{\tau}$ is in the stable region.
By Lemma~\ref{lemma:cross-entropy-hessian-self-bound}, it holds that $\nabla^2 f(W) \preceq 2 f(W) I$ for all $W$.
Then, Lemma~\ref{lemma:descent-lemma-if-stable} gives that the stable region is forward invariant and that, for all $t \geq \tau$,
\begin{align*}
    f(\WWeta{t+1}) - f(\WWeta{t}) \leq
    -\frac{\eta}{2} \| \nabla f(\WWeta{t}) \|_\mathrm{F}^2.
\end{align*}
Combining this with Lemma~\ref{lemma:gradF-lower-bounded-by-F-if-stable}, which gives $\| \nabla f(\WWeta{t}) \|_\mathrm{F} \geq \frac{\gamma}{2} f(\WWeta{t})$ in the stable region, gives
\begin{align*}
    f(\WWeta{t+1}) - f(\WWeta{t}) \leq
    -\frac{\eta}{8} \gamma^2 f(\WWeta{t})^2
    \qquad \textrm{ for all } t \geq \tau.
\end{align*}
Dividing this by $f(\WWeta{t}) f(\WWeta{t+1})$ and using $f(\WWeta{t+1}) \leq f(\WWeta{t})$, we obtain
\begin{align*}
    \frac{1}{f(\WWeta{t+1})} - \frac{1}{f(\WWeta{t})} \geq
    \frac{\eta}{8} \gamma^2 \frac{f(\WWeta{t})}{f(\WWeta{t+1})} \geq
    \frac{\eta}{8} \gamma^2.
\end{align*}
Unrolling this from $\tau$ to $t-1$ yields
\begin{align*}
    \frac{1}{f(\WWeta{t})} - \frac{1}{f(\WWeta{\tau})}
    \geq
    (t-\tau) \frac{\eta}{8} \gamma^2.
\end{align*}
The second part of Theorem~\ref{thm:multiclass-two-phase} follows by rearranging.

Finally, Corollary~\ref{cor:multiclass-accelerated-rate} follows by taking $\eta = \frac{\gamma^2 T}{32}$.
Under the stated assumptions on $T$, part (i) gives $\tau \leq T/2$, and part (ii) then yields $f(\WWeta{T}) \leq \frac{512}{\gamma^4 T^2}$ (see Appendix~\ref{appendix:corollary-proof} for details).

\section{Step-size-independent transition time} \label{section:independent-step-size}

The goal of this section is to prove a quantitative version of Theorem~\ref{thm:multiclass-main-up-informal}.
We do so by comparing the soft perceptron dynamics~\eqref{eq:multiclass-eta-system} with a dynamical system obtained by taking $\eta \to \infty$.
In Section~\ref{subsection:hard-perceptron-limit}, we show that this limit is indeed well defined and is given by the following \emph{hard perceptron} dynamics:
\begin{align} \label{eq:multiclass-infinity-system} \tag{HP}
    \ZZinf{t+1} =
    \ZZinf{t} + \frac{1}{n} \bigl(Y - Q(\ZZinf{t})\bigr)^\top X,
    \qquad
    \ZZinf{0}=0,
\end{align}
where $Q \colon \reals^{K \times d} \to \reals^{n \times K}$ is a \emph{row-hardmax} operation given by
\begin{align} \label{eq:multiclass-Q-formula}
    Q(Z)_{ik} = \big(\hardmax(Z x_i)\big)_k,
    \quad \textrm{ with } \quad 
    \big(\hardmax(s)\big)_k \coloneqq
    \frac{\mathbbm{1}_{\{k \in \argmax{\ell \in \{1,\dots,K\}} s_\ell \}}}{\left| \argmax{\ell \in \{1,\dots,K\}} s_\ell \, \right|}.
\end{align}

We call a matrix $Z \in \reals^{K \times d}$ a \emph{perfect classifier} if, for every training example, the correct class has a strictly larger inner product with the input than every incorrect class:
\begin{align} \label{eq:perfect-classifier-def}
    \langle z_{y_i}, x_i \rangle > \langle z_k, x_i \rangle
    \qquad\qquad \textrm{ for all } i \in [n] \textrm{ and all } k \neq y_i.
\end{align}
If an iterate $Z$ is a perfect classifier, then $Q(Z) = Y$ and so the hard perceptron dynamics terminate.
In Lemma~\ref{lemma:multiclass-perceptron-finite-time}, we show that the hard perceptron dynamics~\eqref{eq:multiclass-infinity-system} reach a perfect classifier after 
\begin{align} \label{eq:T-def}
    T \leq \left\lfloor \frac{4 n}{\gamma^2} \right\rfloor + 1
\end{align}
iterations.
This is a batch analogue of the classical convergence guarantees for the online multiclass perceptron \citep{crammer2003ultraconservative}.
In Section~\ref{section:shadowing-prop}, we then show that for sufficiently large $\eta$, the soft perceptron trajectory ``shadows'' the limiting trajectory up to this time.
More specifically, the \emph{finite-$\eta$ scores} $ \Seta{t}_{ik} \coloneqq \langle \Zzeta{t}_k, x_i \rangle$ remain close enough to the \emph{limiting scores} $ \Sinf{t}_{ik} \coloneqq \langle \Zzinf{t}_k, x_i \rangle$.
Define the set of ``winner'' classes (attaining the largest \emph{limiting} score) by
\begin{align*}
    \winner{t}_i \coloneqq
    \argmax{k\in[K]} \; \Sinf{t}_{ik}.
\end{align*}
We then define the smallest positive gap between a maximizing and a nonmaximizing class along the first $T$ limiting iterates as
\begin{align} \label{eq:delta-def}
    \delta \coloneqq
    \min
    \left\{
        \Sinf{t}_{i\ell}-\Sinf{t}_{ik} \;
        \middle| \;
        0 \leq t \leq T, \;
        i\in[n],\;
        \ell\in\winner{t}_i,\;
        k\notin\winner{t}_i
    \right\}.
\end{align}
The minimum is taken over a finite, nonempty set of positive values, and therefore $\delta>0$.
Moreover, observe that $\delta$ is \emph{determined entirely by the~\eqref{eq:multiclass-infinity-system} trajectory}, and therefore depends only on the data $X$ and labels $Y$.
In particular, it is independent of the GD step size $\eta$.

Although the shadowing result controls the soft perceptron dynamics only up to time $T$, choosing a sufficiently large $\eta$ ensures that, by this time, the iterates have already entered the stable region, despite the fact that the stable sublevel set shrinks as $\eta$ grows.
Alternatively, for any prescribed $\varepsilon > 0$, the step size can be chosen so that the loss at time $T$ is at most $\varepsilon$.
The following theorem makes these statements precise.
\begin{theorem} \label{thm:multiclass-main}
    Fix a dataset $\{ (x_i, y_i) \}_{i = 1}^n$ satisfying~\ref{ass:multiclass1} and~\ref{ass:multiclass2}.
    Let $T$ be the termination time of the hard perceptron~\eqref{eq:multiclass-infinity-system} on this dataset (well defined by Lemma~\ref{lemma:multiclass-perceptron-finite-time}), and let $\delta > 0$ be the gap defined in~\eqref{eq:delta-def}.
    Define $r \coloneqq \max\{1, T/\delta\}$.
    If we optimize the loss $f$ as in~\eqref{eq:multiclass-loss} using GD initialized at $\WWeta{0} = 0$ with constant step size $\eta > 0$, then:
    \begin{enumerate}[label=(\roman*)]
        \item (\textbf{Transition time.})
        If 
        \begin{align*}
            \eta \geq
            \max\left\{
                2 r \log\left( 2 r \log(2 r) \right),
                \frac{8 (K-1)}{\delta},
                \frac{4}{\delta}
                \log\left(\frac{4 (K-1)}{\delta}\right)
            \right\},
        \end{align*}
        then the transition time $\tau$ to the stable region~\eqref{eq:tau-def} satisfies
        \begin{align*}
            \tau
            \leq T
            \leq \left\lfloor \frac{4 n}{\gamma^2} \right\rfloor + 1.
        \end{align*}

        \item (\textbf{$\varepsilon$-loss at a fixed iteration $T$ independent of $\eta$ and $\varepsilon$.})
        For any $\varepsilon \in (0,1)$, if
        \begin{align*}
            \eta \geq
            \max\left\{
                2 r \log\left( 2 r \log(2 r) \right),
                \frac{8 (K-1)}{\delta},
                \frac{2}{\delta}
                \log\left(\frac{K-1}{\varepsilon}\right)
            \right\},
        \end{align*}
        then $f(\WWeta{T}) < \varepsilon$.
    \end{enumerate}
\end{theorem}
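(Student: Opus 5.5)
The proof has three stages. First, the hard perceptron \eqref{eq:multiclass-infinity-system} terminates by time $T$. Second, the soft perceptron \eqref{eq:multiclass-eta-system} shadows it up to time $T$. Third, the final scores give a loss bound that decays exponentially in $\eta$. Termination (Lemma~\ref{lemma:multiclass-perceptron-finite-time}) will follow from Lemma~\ref{lemma:main-perceptron} with $\Pi = Q$. The key point is that the cross term is nonpositive for the hardmax. Indeed, by the computation in \eqref{eq:Z_HX_inner_soft}, $\langle Z, H(Z)^\top X\rangle_\mathrm{F} = \frac1n\sum_i\sum_k Q_{ik}\langle z_{y_i}-z_k,x_i\rangle \le 0$, since $Q_{ik}>0$ only for maximizing $k$. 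So $2K\sum S \ge \gamma^2K^2(\sum S)^2$, which gives $\sum_{t<T} S(\ZZinf{t}) \le 2/(\gamma^2 K)$. While $\ZZinf{t}$ is not a perfect classifier, some $i$ has $Q_{iy_i}\le 1/2$ (either a tie with $y_i$ or $y_i$ not maximal), so $S\ge 1/(2nK)$. Counting the non-perfect steps then bounds them by $4n/\gamma^2$, which gives \eqref{eq:T-def}.

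For the shadowing step I would prove by induction on $t\le T$ a bound of the form $\max_{i,k}|\Seta{t}_{ik}-\Sinf{t}_{ik}| \le e_t$ with $e_t < \delta/4$. The increments satisfy $|\Seta{t+1}_{ik}-\Sinf{t+1}_{ik}| \le |\Seta{t}_{ik}-\Sinf{t}_{ik}| + \frac1n\sum_j \|P(\eta\ZZeta{t})_j - Q(\ZZinf{t})_j\|_1$ by \ref{ass:multiclass1}, using that $\langle x_j,x_i\rangle\le1$. Inside the inductive hypothesis, each nonwinner $k\notin\winner{t}_j$ has finite-$\eta$ score at least $\delta - 2e_t\ge\delta/2$ below every winner. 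Within the winner set, scores differ by at most $2e_t$. Hence the softmax puts mass at most $(K-1)e^{-\eta\delta/2}$ outside the winners. Inside the winners, the mass deviates from uniform by $O(\eta e_t)$ or so, which is the delicate part.

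I expect the main obstacle to be that within-winner deviation, because a naive bound blows up with $\eta$. The fix I would try first is an exact symmetry argument. Two winners $\ell,\ell'\in\winner{t}_j$ tie in the limit, and the differences $\Seta{t}_{j\ell}-\Seta{t}_{j\ell'}$ are driven only by the exponentially small errors accumulated at earlier steps. So I would track directly the quantity $\eta\,e_t$, i.e.\ the unrescaled error in the GD scores, and show $\eta e_t \le t\cdot C(K-1)\eta e^{-\eta\delta/2}$. This stays below a constant as soon as $\eta \ge 2r\log(2r\log 2r)$ with $r=\max\{1,T/\delta\}$. With $\eta e_t$ bounded, the softmax over the winners stays within a constant factor of uniform, and the winner-set error is of order the nonwinner leakage. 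The resulting recursion $e_{t+1}\le e_t + c(K-1)e^{-\eta\delta/2}$ gives $e_T \le cT(K-1)e^{-\eta\delta/2}<\delta/4$. I expect the conditions $\eta\ge 8(K-1)/\delta$ and the $\log$-type threshold on $\eta$ to be what closes the induction. If the symmetry route fails, I would fall back on a Lipschitz estimate for the softmax restricted to near-tied coordinates.

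At $t=T$, $\ZZinf{T}$ is a perfect classifier, so $\winner{T}_i=\{y_i\}$ and $\Sinf{T}_{iy_i}-\Sinf{T}_{ik}\ge\delta$. By the shadowing bound the finite-$\eta$ margins exceed $\delta/2$, so every loss term satisfies $-\log P_{iy_i}\le\log(1+(K-1)e^{-\eta\delta/2})\le (K-1)e^{-\eta\delta/2}$. Hence $f(\WWeta{T})\le (K-1)e^{-\eta\delta/2}$. For (ii), this is $<\varepsilon$ once $\eta\ge\frac2\delta\log\frac{K-1}{\varepsilon}$. For (i), I need $(K-1)e^{-\eta\delta/2}<1/(2\eta)$, which the condition $\eta\ge\frac4\delta\log\frac{4(K-1)}{\delta}$ ensures via $\eta e^{-\eta\delta/4}\le 4/\delta$. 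This gives $\tau\le T$.
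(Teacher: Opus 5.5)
Your termination argument and the final transfer from a score margin of $\delta/2$ to $f(W^{(T)})<(K-1)e^{-\eta\delta/2}$ are fine. The shadowing step, however, has a genuine gap, and it sits in the within-winner term. Suppose $\ell,\ell'$ are tied winners for $x_j$ in the limiting dynamics. The finite-$\eta$ softmax splits their mass according to $\eta\,(s^{(t)}_{j\ell}-s^{(t)}_{j\ell'})$, which can be as large as $2\eta e_t$. So the deviation from the uniform split is of order $\eta e_t$, not of order the leakage $(K-1)e^{-\eta\delta/2}$. Knowing that $\eta e_t$ is bounded only tells you the split is within a constant factor of uniform, i.e.\ that this deviation is $O(1)$. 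The claim that ``the winner-set error is of order the nonwinner leakage'' therefore does not follow. With the correct size, the recursion reads $e_{t+1}\le(1+c\eta)\,e_t+c(K-1)e^{-\eta\delta/2}$, and your hypothesis $\eta e_t\le tC(K-1)\eta e^{-\eta\delta/2}$ does not propagate.

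This is not slack in an estimate. If at some step $x_j$ has a two-way tie in the limiting scores, the soft weights on the tied classes differ from $\tfrac12$ by about $\tfrac{\eta}{4}$ times the current error in their score difference. That component of the error is then multiplied by a factor of order $\eta\|x_j\|^2/n$. Ties inside winner sets are allowed at any $t\le T$, since $\delta$ only measures winner-versus-nonwinner differences. So the error can be multiplied by a factor of order $\eta$ at each such step, and no bound linear in $t$ is available in general.

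The missing idea is to accept this amplification and beat it with the exponential smallness of the leakage. Your fallback, a Lipschitz estimate for the softmax on the tied coordinates, is exactly what the paper does. On $\widetilde{\mathcal W}$, both the renormalized $p$ and the hardmax are softmaxes: of $\eta s$ and of $\eta\tilde s$ respectively, the latter being constant there. Hence $\|p-q\|\le\sqrt2\sum_{k\notin\widetilde{\mathcal W}}p_k+\tfrac{\eta}{2}\|s-\tilde s\|$. For $\omega^{(t)}=\|Z^{(t)}-\tilde Z^{(t)}\|_\mathrm{F}$, this yields $\omega^{(t+1)}\le(1+\tfrac{\eta}{2})\,\omega^{(t)}+\sqrt2(K-1)e^{-\eta\delta/2}$ as long as $\omega^{(t)}<\delta/(2\sqrt2)$. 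Therefore $\omega^{(T)}\le\tfrac{2\sqrt2(K-1)}{\eta}\exp\bigl(-\tfrac{\eta\delta}{2}+T\log(1+\tfrac{\eta}{2})\bigr)$.

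The threshold $\eta\ge 2r\log(2r\log 2r)$ is there precisely to make this exponent negative. After that, $\eta\ge 8(K-1)/\delta$ closes the induction. The threshold does not serve to keep $\eta e_t$ bounded under a linear recursion, as you suggest.

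One smaller slip in part (i): $\eta e^{-\eta\delta/4}\le 4/\delta$ only gives $2\eta(K-1)e^{-\eta\delta/2}\le 2$. You need $\max_{x>0}xe^{-x\delta/4}=\tfrac{4}{e\delta}$, which gives $2/e<1$.
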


The theorem shows, in particular, that GD can reach $\varepsilon$-loss in a number of iterations that is \emph{independent} of $\varepsilon$ (only the required step size depends on $\varepsilon$).

While the dependence of the step size threshold on $n$ and $\gamma$ is expected \citep{crawshaw2026tight}, a natural question is whether the new dependency on $\delta$ in Theorem~\ref{thm:multiclass-main} is strictly necessary, or if this could be replaced by a uniform threshold depending only on the coarser dataset parameters $n$ and~$\gamma$.
A counterexample shows that this is impossible already in dimension three.
Moreover, the same construction shows that the dependence on $\delta$ in Theorem~\ref{thm:multiclass-main} is essentially sharp.

\begin{theorem} \label{thm:counterexample}
    For every $N \geq 1$, there exists $\eta_N > 0$ such that, for every $\eta \geq \eta_N$, there exists a linearly separable dataset $X_\eta$ with $K=2$ consisting of three points in $\reals^3$, each of norm at most $1$ and with margin at least $1 / \sqrt{26} $, for which GD initialized at zero with step size $\eta$ requires more than $N$ iterations to enter the stable region.
    
    Moreover, let $\delta_\eta$ be the gap~\eqref{eq:delta-def} of $X_\eta$.
    On the family of datasets $X_\eta$:
    \begin{enumerate}
        \item The hard perceptron dynamics~\eqref{eq:multiclass-infinity-system} terminates after $T=1$ iteration for all $\eta$ large enough.
        \item For every fixed $c \in (0, 1)$ and large enough $\eta$, GD initialized at zero with step size $c \, \frac{1}{\delta_\eta} \log\left( \frac{2}{\delta_\eta} \right)$ does not enter the stable region within $T=1$ iteration.
        \item For every $\varepsilon \in (0, \frac{1}{6})$ and large enough $\eta$, GD initialized at zero with any step size smaller than $\frac{1}{\delta_\eta} \log\left( \frac{1}{6 \varepsilon }\right)$ does not achieve loss below $\varepsilon$ at iteration $T=1$.
    \end{enumerate}
\end{theorem}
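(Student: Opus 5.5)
The idea is to work in the binary reduction of the Remark (with $K=2$, write $w = w_1 - w_2$ and labels $\pm 1$). I would build $X_\eta$ from two ``bulk'' points $x_1, x_2$ and one ``delicate'' point $x_3$, using the third dimension to decouple $x_3$ from the others. At $Z=0$ the hardmax splits mass equally, so the first step of both~\eqref{eq:multiclass-infinity-system} and~\eqref{eq:multiclass-eta-system} is $\frac1n (Y-\tfrac12\mathbf 1\mathbf 1^\top)^\top X$. This is independent of $\eta$, and is proportional to $\sum_i y_i x_i$ in the binary variable. I would choose:
\begin{itemize}
\item $x_1, x_2$ so that this first update classifies them with an $\eta$-independent margin (of order one);
\item $x_3 = a e_3 + b_\eta u$, where $u$ is a unit vector in $\mathrm{span}(x_1,x_2)$ and $e_3$ is orthogonal to that span.
\end{itemize}
The parameters are tuned so that the score of $x_3$ after one step is a tiny positive number $\delta_\eta$. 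Since all other gaps are of order one, $\delta_\eta$ is exactly the gap~\eqref{eq:delta-def}, and $T=1$ for all large $\eta$; this is item 1. The margin bound $1/\sqrt{26}$ I would certify by exhibiting an explicit unit-norm $W^*$ that separates all three points for every $\eta$, using the $e_3$ direction for $x_3$.

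The core is a one-dimensional recursion for the rescaled score $s_t$ of $x_3$ under~\eqref{eq:multiclass-eta-system}.
\begin{itemize}
\item For $t \ge 1$ the bulk points have softmax mistake mass $O(e^{-c\eta})$. Their contributions to the update, and to $s_t$, are therefore exponentially small.
\item The only non-negligible update comes from $x_3$ itself. It increases $s_t$ by roughly $\frac1n \|x_3\|^2 \,\sigma(-\eta s_t)$, where $\sigma$ is the sigmoid.
\item In the variable $u_t = \eta s_t$, the recursion is therefore approximately $u_{t+1} = u_t + \frac{\eta}{n}\|x_3\|^2 e^{-u_t}$.
\item The loss satisfies $f(\WWeta{t}) \ge \frac1n \log(1+e^{-u_t})$.
\end{itemize}
Staying outside the stable region means $f(\WWeta{t}) \ge 1/(2\eta)$, which holds as long as $u_t \le \log \eta - C_0$. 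I would then set $\delta_\eta = (\log\eta - C_N)/\eta$. In the variable $v_t = u_t - \log\eta$, the recursion becomes $v_{t+1} \approx v_t + c\,e^{-v_t}$, and it starts at $v_1 = -C_N$. Because this recursion is $\eta$-free, choosing $C_N$ large enough keeps $v_t \le -C_0$ for all $t \le N$. Then $\eta_N$ is chosen so that the exponentially small bulk errors are dominated throughout.

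Items 2 and 3 follow from the same formula at $t=1$, now with the step size $\eta'$ decoupled from the dataset parameter $\eta$; only the explicit $\delta_\eta$ matters. At step size $\eta'$ we have $f(W^{(1)}) \approx \frac13\log(1+e^{-\eta'\delta_\eta})$ up to exponentially small bulk terms.
\begin{itemize}
\item Item 3: the bound $f(W^{(1)}) > \varepsilon$ holds precisely when $\eta'\delta_\eta < \log(1/(6\varepsilon))$, using $\log(1+x) \ge x/2$ for small $x$.
\item Item 2: with $\eta' = \frac{c}{\delta_\eta}\log\frac{2}{\delta_\eta}$ we get $e^{-\eta'\delta_\eta} = (\delta_\eta/2)^c$. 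Since $\delta_\eta \approx \log\eta/\eta$ and $c<1$, this dominates $1/\eta'$, so the stable-region condition fails.
\end{itemize}

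The main obstacle is controlling the cross-talk rigorously. Over $N$ steps, the contributions of $x_3$ to the update also perturb the bulk scores, and the bulk contributions perturb $s_t$. The approximations above must therefore hold with explicit error terms, uniformly for $t \le N$. I would handle this by choosing $b_\eta$ (the component of $x_3$ in the bulk span) and the bulk margins so that:
\begin{itemize}
\item the bulk scores stay at least a fixed constant for $t\le N$, since the $x_3$ updates have size $O(1/\eta)$ per step;
\item the bulk errors are $O(e^{-c\eta})$, which is $\ll 1/\eta$ for large $\eta$.
\end{itemize}
An induction on $t \le N$ then closes the argument.
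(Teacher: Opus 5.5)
Your one-step arguments for items 1--3 are fine and essentially match the paper: an explicit gap $\delta_\eta$ of order $\log\eta/\eta$, a lower bound on $f(W^{(1)})$ by the single term of the worst point, and $\log(1+x)\ge x/2$. You only need that one-sided bound there, not an approximation.

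The gap is in the main claim (more than $N$ unstable iterations with margin bounded below uniformly in $N$), and it is not the cross-talk. The step ``because this recursion is $\eta$-free, choosing $C_N$ large enough keeps $v_t\le -C_0$ for all $t\le N$'' is false.

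First, the map $v\mapsto v+c\,e^{-v}$ has minimum value $1+\log c$. So $v_2\ge 1+\log c$ however negative $v_1=-C_N$ is. A larger $C_N$ in fact produces the huge overshoot $v_2=-C_N+c\,e^{C_N}$.

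Second, $e^{v_{t+1}}=e^{v_t}\exp(c\,e^{-v_t})\ge e^{v_t}+c$, hence $e^{v_t}> c\,(t-1)$. So $v_t\le -C_0$ forces $t<1+e^{-C_0}/c$. Here $c$ is proportional to $\|x_3\|^2$, and the margin requirement forces $\|x_3\|\ge\gamma/\sqrt2\ge 1/\sqrt{52}$. Hence your dataset leaves the unstable region after a number of steps bounded by an absolute constant, independent of $N$.

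This failure is structural. Once the bulk points are resolved, the loss is that of a single point whose logit increases monotonically. Such one-dimensional large-step dynamics self-correct at the $1/\eta$ scale: the rescaled loss $\rho_t=\eta f(W^{(t)})$ obeys $\rho_{t+1}\approx\rho_t e^{-\kappa\rho_t}$ with $\kappa\propto\|x_3\|^2$, so $1/\rho_t$ grows at least linearly.

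What is missing is a mechanism that keeps $\eta f(W^{(t)})$ bounded below for arbitrarily many steps. The paper gets it from two points $x_\pm=(a,\pm b,0)$ with $b>a$, both labelled $+1$. Both keep loss at scale $1/\eta$, and they push the iterate back and forth.

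Concretely, use the coordinates $\xi=a\langle w,e_1\rangle-\log\eta$ and $\zeta=b\langle w,e_2\rangle$ of the binary iterate $w$, and set $\rho=\tfrac23e^{-\xi}\cosh(\zeta)$. The $\eta\to\infty$ limit of GD is then $\xi_+=\xi+a^2\rho$ and $\zeta_+=\zeta-b^2\rho\tanh(\zeta)$. Thus $\zeta$ overshoots, flips sign and grows in magnitude, and the growth of $\cosh(\zeta)$ beats the factor $e^{-a^2\rho}$. As a result, the region $\{\rho\ge72,\ \log2\le|\zeta|\le\tfrac{7}{320}\rho\}$ is forward invariant.

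The third, $\eta$-dependent point serves only to land, after one step from $W^{(0)}=0$, at $(\xi,\zeta)=(-\log96,\log2)$, i.e.\ $\rho=80$, inside that region. Afterwards its score is $\eta/24+O(\log\eta)$ and it drops out. Convergence of each fixed iterate as $\eta\to\infty$ then keeps the rescaled loss above $36$ for $1\le t\le N$.
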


Stated differently: the conditions on $\eta$ in Theorem~\ref{thm:multiclass-main} must involve some property of the dataset $X$ beyond $\gamma$ and $T$, already with $d=3$, $n=3$ and $K=2$.
We identify $\delta$ as such a property, and the second part of Theorem~\ref{thm:counterexample} shows that if we require GD to enter the stable region by the time the hard perceptron dynamics~\eqref{eq:multiclass-infinity-system} terminates, the dependency on $\delta$ in Theorem~\ref{thm:multiclass-main} is of the correct order.
The proof of Theorem~\ref{thm:counterexample} is postponed to Appendix~\ref{appendix:counterexample}.

\begin{remark}
    The counterexample in Theorem~\ref{thm:counterexample} can be modified so that all datasets in the family have the same maximum margin.
    Let $\gamma_\eta$ be the maximum margin of $X_\eta$, and fix a target margin $\gamma_0 \leq \gamma_\eta$.
    Consider the rescaled dataset $\widetilde{X}_\eta = s_\eta X_\eta$ with $s_\eta = \gamma_0/\gamma_\eta \leq 1$, which has maximum margin $\gamma_0$ and preserves the unit norm bound.
    Under the rescaled step size $\widetilde{\eta} = \eta/s_\eta^2 \geq \eta$, the corresponding GD iterates satisfy $\widetilde{W}^{(t)} = \WWeta{t}/s_\eta$, leaving all class scores and loss values along the trajectory unchanged.
    Since $1 / (2\widetilde{\eta}) = s_\eta^2 / (2\eta) \leq 1 / (2\eta)$, the rescaled trajectory remains outside its stable region for at least as long as the original trajectory, so the same lower bound on the transition time holds.
\end{remark}

\subsection{The hard perceptron dynamics} \label{subsection:hard-perceptron-limit}

In the first part of this section, we show that the hard perceptron dynamics~\eqref{eq:multiclass-infinity-system} is obtained by taking the limit $\eta \to \infty$ in the soft perceptron dynamics~\eqref{eq:multiclass-eta-system}.
\begin{lemma} \label{lemma:eta-infinity-limit}
    Consider the~\eqref{eq:multiclass-eta-system} and ~\eqref{eq:multiclass-infinity-system} iteration maps given by
    \begin{align*}
        G_\eta(Z) = Z + \frac{1}{n} \big(Y - P(\eta Z) \big)^\top X
        \qquad \textrm{and} \qquad 
        G_\infty(Z) = Z + \frac{1}{n} \big( Y - Q(Z) \big)^\top X.
    \end{align*}
    For every $Z \in \reals^{K \times d}$, it holds that $G_\infty(Z) = \lim_{\eta \to \infty} G_\eta(Z) $.
\end{lemma}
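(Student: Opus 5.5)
Since $G_\eta(Z) - G_\infty(Z) = \frac{1}{n}\bigl(Q(Z) - P(\eta Z)\bigr)^\top X$ and $X$ is fixed, it suffices to show that $P(\eta Z) \to Q(Z)$ entrywise as $\eta \to \infty$. The matrices have finitely many entries, so this reduces to a row-by-row statement. Fix $i \in [n]$ and write $s = Z x_i \in \reals^K$, so that $s_k = \langle z_k, x_i\rangle$. Then $P(\eta Z)_{ik} = \exp(\eta s_k)/\sum_{l} \exp(\eta s_l)$ and $Q(Z)_{ik} = (\hardmax(s))_k$. The claim becomes the elementary fact that the softmax at inverse temperature $\eta$ converges to the uniform distribution on the argmax set.

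To prove this fact, let $M = \max_\ell s_\ell$ and $A = \argmax{\ell} s_\ell$, a nonempty set with $|A| \geq 1$. Multiply numerator and denominator by $\exp(-\eta M)$ to get
\begin{align*}
    P(\eta Z)_{ik} = \frac{\exp(\eta (s_k - M))}{|A| + \sum_{l \notin A} \exp(\eta(s_l - M))}.
\end{align*}
For every $l \notin A$ we have $s_l - M < 0$, so $\exp(\eta(s_l - M)) \to 0$ as $\eta \to \infty$, and the denominator tends to $|A|$. The numerator equals $1$ if $k \in A$ and tends to $0$ if $k \notin A$. Hence $P(\eta Z)_{ik} \to \mathbbm{1}_{\{k\in A\}}/|A| = Q(Z)_{ik}$.

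Combining the rows, $P(\eta Z) \to Q(Z)$ in $\reals^{n\times K}$. Since the map $\Pi \mapsto Z + \frac{1}{n}(Y-\Pi)^\top X$ is continuous (it is affine), $G_\eta(Z) \to G_\infty(Z)$. There is no real obstacle here. The only care needed is with ties: the limit is the uniform split over $A$, not a single index, and this is why $Q$ in~\eqref{eq:multiclass-Q-formula} is defined with the normalization by $|A|$. One could also record the quantitative bound $|P(\eta Z)_{ik} - Q(Z)_{ik}| \leq (K-1)e^{-\eta g}$, where $g$ is the gap between the top score and the largest nonmaximal score. That bound is what the later shadowing argument would use, but it is not needed for the pointwise limit.
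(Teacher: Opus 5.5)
Your proposal is correct and follows essentially the same route as the paper: reduce to the entrywise convergence $P(\eta Z) \to Q(Z)$, then compute the limit row by row. The only cosmetic difference is that you divide by $\exp(\eta M)$ with $M$ the maximal score, whereas the paper writes $P(\eta Z)_{ik} = 1/\sum_j \exp(\eta(s_{ij}-s_{ik}))$ and separates the cases where $s_{ik}$ is maximal or not.
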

\noindent This follows directly from the fact that each row of $P(\eta Z)$ converges as $\eta \to \infty$ to the uniform distribution over the maximizers of each score vector, which is $Q(Z)$.
We include the proof in Appendix~\ref{appendix:shadowing}.

The next lemma shows that the hard perceptron dynamics reaches a perfect classifier in finite time, with an explicit bound on the number of iterations.
This is a full-batch multiclass analogue of the classical perceptron convergence theorem~\citep{novikoff1962convergence}; related multiclass mistake bounds were established by~\citet{crammer2003ultraconservative}.
\begin{lemma} \label{lemma:multiclass-perceptron-finite-time}
    Under~\ref{ass:multiclass1} and~\ref{ass:multiclass2},~\eqref{eq:multiclass-infinity-system} reaches a perfect classifier~\eqref{eq:perfect-classifier-def} in $T \leq \left\lfloor \frac{4 n}{\gamma^2} \right\rfloor + 1$ iterations.
\end{lemma}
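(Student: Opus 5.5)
We apply Lemma~\ref{lemma:main-perceptron} with $\Pi = Q$, the row-hardmax of Eq.~\eqref{eq:multiclass-Q-formula}. Since each row of $Q(Z)$ is a probability vector, \eqref{eq:multiclass-infinity-system} is an instance of \eqref{eq:general-system}. Write $\SSS{t} = S(\ZZinf{t})$. The plan has three steps: bound the cross term by zero, lower bound $\SSS{t}$ at every non-perfect iterate, and then count iterations.

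First, the cross term $\langle \ZZinf{t}, H(\ZZinf{t})^\top X\rangle_\mathrm{F}$ is nonpositive. Repeating the computation in Eq.~\eqref{eq:Z_HX_inner_soft} with $P$ replaced by $Q$ gives
\begin{align*}
\langle \ZZinf{t}, H(\ZZinf{t})^\top X\rangle_\mathrm{F} = \frac{1}{n}\sum_{i=1}^n\sum_{k=1}^K Q_{ik}\,\langle \Zzinf{t}_{y_i}-\Zzinf{t}_k, x_i\rangle .
\end{align*}
Here $Q_{ik}>0$ only when $k$ maximizes the score $\langle \Zzinf{t}_k, x_i\rangle$, and for such $k$ we have $\langle \Zzinf{t}_{y_i}-\Zzinf{t}_k,x_i\rangle \le 0$. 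So every term is nonpositive. Lemma~\ref{lemma:main-perceptron} then reduces to
\begin{align*}
2K\sum_{t=0}^{T'-1}\SSS{t} \ge \gamma^2K^2\Bigl(\sum_{t=0}^{T'-1}\SSS{t}\Bigr)^2,
\qquad\text{that is,}\qquad
\sum_{t=0}^{T'-1}\SSS{t}\le \frac{2}{\gamma^2 K},
\end{align*}
for every $T'\ge1$. The division is allowed when the sum is positive; when it is zero the bound holds trivially.

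Second, if $\ZZinf{t}$ is not a perfect classifier, then some $i$ and some $k\ne y_i$ satisfy $\langle \Zzinf{t}_k,x_i\rangle\ge\langle \Zzinf{t}_{y_i},x_i\rangle$. This gives a lower bound on the mistake mass, with two cases.
\begin{enumerate}[label=(\alph*)]
\item If $y_i$ is not a maximizer, then $Q_{iy_i}=0$ and $1-Q_{iy_i}=1$.
\item If $y_i$ ties with $k$ at the maximum, then $Q_{iy_i}\le 1/2$ and $1-Q_{iy_i}\ge1/2$.
\end{enumerate}
In both cases $\SSS{t}\ge \frac{1}{2nK}$.

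Third, suppose the first $T'$ iterates $\ZZinf{0},\dots,\ZZinf{T'-1}$ are all non-perfect. Then
\begin{align*}
\frac{T'}{2nK}\le \frac{2}{\gamma^2K},
\qquad\text{so}\qquad
T'\le \frac{4n}{\gamma^2}.
\end{align*}
Taking $T' = \lfloor 4n/\gamma^2\rfloor+1 > 4n/\gamma^2$ gives a contradiction. Hence some iterate $\ZZinf{t}$ with $t\le \lfloor 4n/\gamma^2\rfloor$ is a perfect classifier. At that point $Q(\ZZinf{t})=Y$, so the dynamics stay fixed from then on. The first such time is therefore at most $\lfloor 4n/\gamma^2\rfloor+1$, under either indexing convention for "reaching" a perfect classifier.

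The only delicate point is the tie case (b), where the hardmax splits mass among the tied classes. That case is why the lower bound on $\SSS{t}$ carries the factor $\frac12$, which produces the constant $4$ rather than $2$. The rest is a direct specialization of Lemma~\ref{lemma:main-perceptron}.
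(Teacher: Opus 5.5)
Your proposal is correct and follows essentially the same route as the paper. Both specialize Lemma~\ref{lemma:main-perceptron} to $\Pi = Q$, show the cross term is nonpositive because $Q$ puts mass only on maximizing classes, lower bound the mistake mass by $\frac{1}{2nK}$ at every non-perfect iterate (handling ties via $Q_{iy_i} \le 1/2$), and then count iterations. As you note, the count actually places the first perfect iterate at index at most $\lfloor 4n/\gamma^2 \rfloor$, so the stated $+1$ is slack in both your argument and the paper's.
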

\begin{proof}
    Observe that the hard perceptron dynamics~\eqref{eq:multiclass-infinity-system} is an instance of~\eqref{eq:general-system} with $H(Z) = \frac{1}{n} (Y - Q(Z))$, so we can use the results from Section~\ref{section:general-system}.
    For this system, define the \emph{average mistake mass} at iteration $t$ as 
    \begin{align}
        \SSSinf{t} \coloneqq
        S(\ZZinf{t}) = 
        \frac{1}{nK} \sum_{i=1}^n \sum_{k \in [K], k \neq y_i} Q(\ZZinf{t})_{ik}.
    \end{align}
    By Lemma~\ref{lemma:main-perceptron} applied to the hard perceptron dynamics~\eqref{eq:multiclass-infinity-system}, for all $T \geq 1$ we obtain
    \begin{align} \label{eq:perceptron-inequality-for-HP}
        2 K \sum_{t=0}^{T-1} \SSSinf{t} + 2 \sum_{t=0}^{T-1} \langle \ZZinf{t}, H(\ZZinf{t})^\top X \rangle_\mathrm{F} \geq \gamma^2 K^2 \left( \sum_{t=0}^{T-1} \SSSinf{t} \right)^2.
    \end{align}
    We will show that the inner product term is non-positive.
    We have 
    \begin{align} \label{eq:Z_HX_inner}
        \langle \ZZinf{t}, H(\ZZinf{t})^\top X \rangle_\mathrm{F} = 
        \frac{1}{n} \langle \ZZinf{t}, (Y - Q(\ZZinf{t}))^\top X \rangle_\mathrm{F} = 
        \frac{1}{n} \sum_{i=1}^n \sum_{k=1}^K \langle \Zzinf{t}_k, x_i \rangle (Y_{ik} - Q_{ik}),
    \end{align}
    where we use $Q = Q(\ZZinf{t})$ here.
    We argue that the contribution of each data point to the sum is non-positive.
    Recall that $Y_{ik} = 1$ if $k = y_i$ and $Y_{ik} = 0$ otherwise.
    Also, from Eq.~\eqref{eq:multiclass-Q-formula},
    \begin{align} \label{eq:Q-specific}
        Q_{ik} = \frac{1}{\left| \winner{t}_i \right|} \textrm{ if } k \in \winner{t}_i,
        \qquad \textrm{ and } \qquad
        Q_{ik} = 0 \textrm{ otherwise}.
    \end{align}
    By the definition of $\winner{t}_i$, for every data point $x_i$ and every $k \in \winner{t}_i$, it holds that
    \begin{align*}
        \langle \Zzinf{t}_k, x_i \rangle \geq \langle \Zzinf{t}_{y_i}, x_i \rangle.
    \end{align*}
    Accordingly, for every data point $x_i$, it holds that
    \begin{align*}
        \sum_{k=1}^K \langle \Zzinf{t}_k, x_i \rangle (Y_{ik} - Q_{ik})
        &= \langle \Zzinf{t}_{y_i}, x_i \rangle - \sum_{k=1}^K \langle \Zzinf{t}_k, x_i \rangle Q_{ik} \\
        &= \langle \Zzinf{t}_{y_i}, x_i \rangle - \frac{1}{\left| \winner{t}_i \right|} \sum_{k \in \winner{t}_i} \langle \Zzinf{t}_k, x_i \rangle \\
        &= \frac{1}{\left| \winner{t}_i \right|} \sum_{k \in \winner{t}_i}  \langle \Zzinf{t}_{y_i}, x_i \rangle
        - \frac{1}{\left| \winner{t}_i \right|} \sum_{k \in \winner{t}_i} \langle \Zzinf{t}_k, x_i \rangle \\
        &= \frac{1}{\left| \winner{t}_i \right|} \sum_{k \in \winner{t}_i} \langle \Zzinf{t}_{y_i} - \Zzinf{t}_k, x_i \rangle
        \leq 0.
    \end{align*}
    This shows that every term in the sum in Eq.~\eqref{eq:Z_HX_inner} is non-positive, and thus
    \begin{align*}
    \langle \ZZinf{t}, H(\ZZinf{t})^\top X \rangle_\mathrm{F} \leq 0.
    \end{align*}

    Plugging this back in the perceptron-type inequality from Eq.~\eqref{eq:perceptron-inequality-for-HP} and rearranging, we obtain
    \begin{align*}
         \sum_{t=0}^{T-1} \SSSinf{t} \leq \frac{2}{\gamma^2 K},
         \quad \textrm{ for all } T \geq 1.
    \end{align*}

    We claim that if iterate $t$ is not a perfect classifier, then $\SSSinf{t} \geq \frac{1}{2nK}$.
    Combined with the above bound, this shows that the dynamics must terminate in finite time.
    Let $T$ be the index of the first iterate which is a perfect classifier.
    Then, the same bound also gives $\frac{T}{2 n K} \leq \frac{2}{\gamma^2 K}$, and so $T \leq \left\lfloor 4n/\gamma^2 \right\rfloor + 1$, as claimed.

    It remains to show that $\SSSinf{t} \geq \frac{1}{2nK}$ for all iterations $t$ such that there is at least one misclassified point $x_{i_0}$ (that is, $\winner{t}_{i_0} \neq \{ y_{i_0} \}$).
    Then
    \begin{align*}
        \SSSinf{t}
        = \frac{1}{nK} \sum_{i=1}^n \sum_{\substack{k \in [K] \\ k \neq y_i}} Q_{i k} (\ZZinf{t})
        \geq \frac{1}{nK} \sum_{\substack{k \in [K] \\ k \neq y_{i_0}}} Q_{i_0 k} (\ZZinf{t}) 
        = \frac{1}{nK} \left(1 - Q_{i_0 y_{i_0}} (\ZZinf{t}) \right).
    \end{align*}
    If $y_{i_0}\notin\winner{t}_{i_0}$, then $Q_{i_0 y_{i_0}} (\ZZinf{t}) = 0$.
    Otherwise, $\left|\winner{t}_{i_0}\right|\geq 2$, and hence
    \begin{align*}
        Q_{i_0y_{i_0}}(\ZZinf{t}) =
        \frac{1}{\left| \winner{t}_{i_0} \right|} \leq
        \frac{1}{2}.
    \end{align*}
    In either case, we obtain $\SSSinf{t}\geq \frac{1}{2nK}$.
    This concludes the proof.
\end{proof}

\subsection{A shadowing property} \label{section:shadowing-prop}

In the next lemma, we show that when $\eta$ is sufficiently large, the finite-$\eta$ trajectory of~\eqref{eq:multiclass-eta-system} remains close to the limiting trajectory~\eqref{eq:multiclass-infinity-system} up to time $T$.
By definition of the termination time, the~\eqref{eq:multiclass-infinity-system} dynamics reaches a perfect classifier at time $T$, with margin at least $\delta$~\eqref{eq:delta-def}.
The upcoming shadowing bound implies that the~\eqref{eq:multiclass-eta-system} dynamics also classifies every data point correctly at time $T$, with margin at least $\delta/2$.

Recall that the finite-$\eta$ and limiting scores are given by
\begin{align*}
    \Seta{t}_{ik} \coloneqq \langle \Zzeta{t}_k, x_i \rangle,
    \qquad\qquad
    \Sinf{t}_{ik} \coloneqq \langle \Zzinf{t}_k, x_i \rangle.
\end{align*}

\begin{lemma}[Shadowing lemma] \label{lemma:multiclass-shadowing}
    Suppose~\ref{ass:multiclass1} and~\ref{ass:multiclass2} hold.
    Let $T$ be the termination time of the~\eqref{eq:multiclass-infinity-system} dynamics (Lemma~\ref{lemma:multiclass-perceptron-finite-time}), and let $\delta > 0$ be the score gap as defined in Eq.~\eqref{eq:delta-def}.
    If the step size satisfies $\eta \geq \max\left\{ 2r \log\left( 2r \log\left( 2r \right) \right), \frac{8 (K-1)}{\delta} \right\}$, where $r = \max\{1, T/\delta\}$, then
    \begin{align*}
        \langle \Zzeta{T}_{y_i}, x_i \rangle \geq 
        \langle \Zzeta{T}_k, x_i \rangle + \frac{\delta}{2} 
        \qquad \textrm{ for all } i \in [n] \textrm{ and all } k \neq y_i.
    \end{align*}
\end{lemma}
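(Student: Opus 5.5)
The plan is to track the worst-case score discrepancy
\begin{align*}
    e_t \coloneqq \max_{i \in [n],\, k \in [K]} \bigl| \Seta{t}_{ik} - \Sinf{t}_{ik} \bigr|,
    \qquad e_0 = 0,
\end{align*}
and to prove a linear recursion for it by induction on $t \leq T$. Subtracting~\eqref{eq:multiclass-infinity-system} from~\eqref{eq:multiclass-eta-system} gives
\begin{align*}
    \ZZeta{t+1} - \ZZinf{t+1} = \ZZeta{t} - \ZZinf{t} - \frac{1}{n}\bigl(P(\eta \ZZeta{t}) - Q(\ZZinf{t})\bigr)^\top X .
\end{align*}
Taking the inner product of row $k$ with $x_i$, and using $|\langle x_j, x_i\rangle| \leq 1$ from~\ref{ass:multiclass1}, yields
\begin{align*}
    e_{t+1} \leq e_t + \max_{j \in [n]} \bigl\| P(\eta \ZZeta{t})_j - Q(\ZZinf{t})_j \bigr\|_1 .
\end{align*}
I split the row error through the intermediate quantity $P(\eta \ZZinf{t})_j$, that is, the softmax of the \emph{limiting} scores.

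The first piece, $\|P(\eta \ZZeta{t})_j - P(\eta \ZZinf{t})_j\|_1$, is controlled by Lipschitz continuity of the softmax. The Jacobian of $s \mapsto \mathrm{softmax}(\eta s)$ is $\eta(\mathrm{diag}(p) - pp^\top)$, and its $\ell_\infty \to \ell_1$ operator norm is at most $2\eta$. Hence this piece is at most $2\eta e_t$.

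The second piece, $\|P(\eta \ZZinf{t})_j - Q(\ZZinf{t})_j\|_1$, is where $\delta$ enters. By~\eqref{eq:delta-def}, every class $k \notin \winner{t}_j$ has limiting score at least $\delta$ below the common winner score. The winners are exactly tied, so $P(\eta \ZZinf{t})_j$ restricted to $\winner{t}_j$ is proportional to the uniform vector $Q(\ZZinf{t})_j$. The total non-winner mass is then at most $(K-1)e^{-\eta\delta}$, and the $\ell_1$ distance is at most $2(K-1)e^{-\eta\delta}$.

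Routing through the limiting scores, rather than bounding the softmax of the finite-$\eta$ scores directly, is the key design choice. Tied winners in the limit may differ by $O(e_t)$ at finite $\eta$, and that difference gets amplified by $\eta$. Comparing against $P(\eta \ZZinf{t})$ keeps the ties exact and pushes all amplification into the Lipschitz factor. Combining the two pieces gives
\begin{align*}
    e_{t+1} \leq (1+2\eta)\, e_t + 2(K-1)e^{-\eta\delta},
\end{align*}
and therefore
\begin{align*}
    e_T \leq 2(K-1)\, T\, (1+2\eta)^{T}\, e^{-\eta\delta}
    \leq 2(K-1)\, T \exp\bigl(T \log(3\eta) - \eta\delta\bigr),
\end{align*}
using $\eta \geq 1$, which holds since $\eta \geq 8(K-1)/\delta$ and $\delta \leq 2$.

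To conclude, $\ZZinf{T}$ is a perfect classifier, so $\winner{T}_i = \{y_i\}$ and $\Sinf{T}_{iy_i} - \Sinf{T}_{ik} \geq \delta$ for all $k \neq y_i$. It therefore suffices to show $e_T \leq \delta/4$, since then $\Seta{T}_{iy_i} - \Seta{T}_{ik} \geq \delta - 2e_T \geq \delta/2$.

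The main obstacle is checking that the stated step-size thresholds force $2(K-1)T(3\eta)^T e^{-\eta\delta} \leq \delta/4$, equivalently
\begin{align*}
    \eta\delta \geq T\log(3\eta) + \log\bigl(8(K-1)T/\delta\bigr).
\end{align*}
I would split the exponent as $e^{-\eta\delta/2}\cdot e^{-\eta\delta/2}$. One factor absorbs the polynomial growth: we need $\eta \geq 2(T/\delta)\log(3\eta)$, which is of the form $\eta \geq 2r\log\eta$ up to constants. This is exactly what $\eta \geq 2r\log(2r\log(2r))$ delivers, by the standard lemma that $x \geq 2a\log(2a\log 2a)$ implies $x \geq a\log x$ (with $r \geq 1$ covering the case $T < \delta$). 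The other factor absorbs the prefactor $(K-1)/\delta$, using $\eta \geq 8(K-1)/\delta$ and $e^{-x} \leq 1/x$. I expect some constant juggling here, for instance $\log(3\eta)$ versus $\log\eta$, possibly absorbed by slightly sharpening the Lipschitz constant to $\eta$ (the $\ell_\infty \to \ell_1$ norm of $\mathrm{diag}(p) - pp^\top$ is actually at most $1$). The structure of the argument, however, should not change.
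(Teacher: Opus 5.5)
Your route is correct and genuinely different from the paper's, except for the closing verification, which fails as sketched but is easy to repair.

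\textbf{Comparison with the paper.} The paper tracks $\Zomega{t}=\|\ZZeta{t}-\ZZinf{t}\|_\mathrm{F}$ and compares $\softmax(\eta\Seta{t}_i)$ with $\hardmax(\Sinf{t}_i)$ directly. It does this through an auxiliary lemma that splits the softmax into its mass on and off $\winner{t}_i$, and applies the $\tfrac12$-Lipschitz bound only on $\winner{t}_i$. The off-winner mass there belongs to the \emph{finite-$\eta$} scores. So the induction must carry $\Zomega{t}<\delta/(2\sqrt2)$ as a hypothesis, and it only yields $e^{-\eta\delta/2}$.

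Your detour through $\softmax(\eta\Sinf{t}_j)$ keeps the ties exact. Your recursion $e_{t+1}\le(1+2\eta)e_t+2(K-1)e^{-\eta\delta}$ therefore holds unconditionally, with the better exponent and no auxiliary lemma. The sharper version $(1+\eta)e_t+\dots$ is also correct, since $\sum_k p_k|v_k-p^\top v|\le 1$ whenever $\|v\|_\infty\le 1$. This is a cleaner argument than the paper's.

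\textbf{Where the closing step fails.} Two things go wrong.
\begin{enumerate}
\item Bounding $\sum_{j=0}^{T-1}(1+2\eta)^j$ by $T(1+2\eta)^T$ discards the factor $1/\eta$ needed to absorb $(K-1)/\delta$. After your split, the second half gives only $2(K-1)T\cdot\frac{2}{\eta\delta}\le T/2$, not $\delta/4$.
\item The first half needs $T\log(3\eta)\le\eta\delta/2$, i.e.\ $\eta\ge 2r\log(3\eta)$. The threshold does not imply this when the first term of the max is the binding one. At $\eta=2r\log(2r\log(2r))$ the inequality reduces to $\log(2r)\ge 3\log(2r\log(2r))$, which is false for every $r\ge 1$. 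The lemma you quote only gives $\eta\ge r\log\eta$.
\end{enumerate}

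\textbf{The fix.} Keep the exponent whole and sum the geometric series. With constant $1$ and $\eta\ge 8(K-1)/\delta$,
\begin{align*}
e_T \le 2(K-1)e^{-\eta\delta}\sum_{j=0}^{T-1}(1+\eta)^j < \frac{2(K-1)}{\eta}\,e^{T\log(1+\eta)-\eta\delta} \le \frac{\delta}{4}\,e^{T\log(1+\eta)-\eta\delta}.
\end{align*}
Since $1+\eta\le(1+\eta/2)^2$, we get $T\log(1+\eta)\le 2T\log(1+\eta/2)<\eta\delta$ by Lemma~\ref{lemma:eta-condition}. This is exactly what the threshold $2r\log(2r\log(2r))$ is designed to give. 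Hence $e_T<\delta/4$, and your final step goes through.

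This version also removes any need for $\eta\ge 1$, and with it your claim $\delta\le 2$. That claim is true, since a gap of at most $1$ already appears at $t=1$, but you did not justify it.
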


In the proof of the shadowing lemma, we compare the iterates of~\eqref{eq:multiclass-eta-system} and~\eqref{eq:multiclass-infinity-system}.
Their iteration maps only differ through the softmax and hardmax terms: the next lemma bounds this difference.

\begin{lemma} \label{eq:softmax-hardmax-bound}
    Let $\eta > 0$ and $s, \tilde s \in \reals^K$ and consider the vectors
    $p = \softmax(\eta s)$ and $q = \hardmax(\tilde s)$.
    Define the set of indices $\winnernoarg = \argmax{l \in \{1, \dots, K\}} \tilde s_l$.
    Then, 
    \begin{align*}
        \| p - q \| \leq 
        \sqrt{2} \sum_{k \notin \winnernoarg} p_k 
        + \frac{\eta}{2} \| s - \tilde s \|.
    \end{align*}
\end{lemma}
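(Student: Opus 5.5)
We insert the intermediate vector $\tilde p \coloneqq \softmax(\eta \tilde s)$ and apply the triangle inequality:
\begin{align*}
    \| p - q \| \leq \| \tilde p - q \| + \| p - \tilde p \|.
\end{align*}
The first term measures how far the softmax at the limiting scores is from the hardmax. The second term is a Lipschitz estimate for the softmax map.

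For the Lipschitz term, we use that $\softmax$ is the gradient of the log-sum-exp function $\mathrm{lse}(u) = \log \sum_k e^{u_k}$. Its Jacobian at $u$ is $\mathrm{diag}(\pi) - \pi \pi^\top$ with $\pi = \softmax(u)$. This matrix is positive semidefinite, and its largest eigenvalue is at most $\max_k \pi_k(1-\pi_k) \cdot 2 \le 1/2$. To see this, bound the operator norm by the maximal absolute row sum, which equals $2\pi_k(1-\pi_k) \le 1/2$. Hence $\softmax$ is $\tfrac12$-Lipschitz in the Euclidean norm, and the mean value theorem along the segment from $\eta \tilde s$ to $\eta s$ gives $\| p - \tilde p \| \le \tfrac{\eta}{2} \| s - \tilde s\|$.

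For the first term, the issue is that the claimed bound involves $p_k$ (softmax at $s$), not $\tilde p_k$. So it is more convenient to split differently and compare $p$ with $q$ directly, keeping only the Lipschitz error for the winner-internal part. Write $m \coloneqq \sum_{k\notin \winnernoarg} p_k$. Let $\bar p$ be the restriction of $p$ to $\winnernoarg$, renormalized. Coordinates outside $\winnernoarg$ contribute $\big(\sum_{k\notin\winnernoarg} p_k^2\big)^{1/2} \le m$ to $\|p-q\|$. On $\winnernoarg$, we compare $p$ with the uniform vector $q$. For this, use that $\tilde s$ is constant on $\winnernoarg$, so $q|_{\winnernoarg}$ is proportional to $\softmax(\eta\tilde s)|_{\winnernoarg}$. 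The deviation of $p|_{\winnernoarg}$ from being uniform is therefore controlled by the variation of $\eta s$ on $\winnernoarg$, which is at most the variation of $\eta(s-\tilde s)$.

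Concretely, I would decompose $p|_{\winnernoarg} - q|_{\winnernoarg} = (p|_{\winnernoarg} - (1-m) u) + (1-m)u - u$, where $u$ is uniform on $\winnernoarg$. The second piece has norm $m/\sqrt{|\winnernoarg|} \le m$, which together with the outside part gives the $\sqrt 2\, m$ after combining orthogonal pieces. Here the outside part has norm at most $m$ and lives on disjoint coordinates, so $\sqrt{m^2+m^2} = \sqrt2\, m$. The first piece is $(1-m)(\bar p - u)$. Since $\bar p = \softmax(\eta s|_{\winnernoarg})$ and $u = \softmax(\eta \tilde s|_{\winnernoarg})$, the Lipschitz bound gives $\|\bar p - u\| \le \tfrac{\eta}{2}\|s-\tilde s\|$.

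The main obstacle is arranging the pieces so that the constants are exactly $\sqrt2$ and $\eta/2$. I would group the outside coordinates with the scalar piece $(1-m)u - u$ via orthogonality-type estimates, and put the Lipschitz piece separately via the triangle inequality. The careful step is checking that $\|(p|_{\winnernoarg}-q|_{\winnernoarg}, p|_{\winnernoarg^c})\|$ splits as claimed, with the factor $(1-m)\le 1$ absorbed.
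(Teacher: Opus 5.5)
Your proposal is correct and is essentially the paper's argument. You split off the non-winner coordinates, recognize the renormalized winner part of $p$ and the uniform vector $q|_{\widetilde{\mathcal W}}$ as $\mathrm{softmax}$ of $\eta s$ and $\eta\tilde s$ restricted to $\widetilde{\mathcal W}$, apply the $\tfrac12$-Lipschitz bound there, and get the $\sqrt2$ from two disjointly supported pieces of norm at most $\sum_{k\notin\widetilde{\mathcal W}}p_k$.

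The only difference is bookkeeping. The paper writes $p-q=(1-m)(\sigma-\pi)+(\pi-q)$, with $m$ the winner mass, and attaches the mass correction to the renormalized winner vector. You attach it to the uniform vector and keep $(1-m)(\bar p-u)$ for the Lipschitz step. Your version only divides by the always-positive winner mass, so it avoids the paper's separate case $\widetilde{\mathcal W}=[K]$.
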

\begin{proof}
    Since $q = \hardmax(\tilde s)$, by definition of $\winnernoarg$ we have
    \begin{align*}
        q_k = 
        \left\{
        \begin{array}{ll}
              \frac{1}{\left|\winnernoarg \right|} & k \in \winnernoarg \\
              0 & k \notin \winnernoarg.
        \end{array} 
        \right.
    \end{align*}
    If $\winnernoarg = [K]$, then $q = \softmax(\eta \tilde s)$, and the result follows directly since $\mathrm{Lip}(\softmax) \leq \frac{1}{2}$ (Lemma~\ref{lemma:softmax-lipschitz}).
    We therefore assume in the remainder that $\winnernoarg \neq [K]$.
    Define $m = \sum_{k \in \winnernoarg } \, p_k$, and also the following probability vectors
    \begin{align*}
        \pi_k = 
        \left\{
        \begin{array}{ll}
              \frac{p_k}{m} & k \in \winnernoarg \\
              0 & k \notin \winnernoarg
        \end{array} 
        \right.
        \qquad\textrm{ and }\qquad 
        \sigma_k = 
        \left\{
        \begin{array}{ll}
              0 & k \in \winnernoarg \\
              \frac{p_k}{1-m} & k \notin \winnernoarg .
        \end{array} 
        \right.
    \end{align*}
    With these definitions, we have $p = m \pi + (1-m) \sigma$, and so
    \begin{align} \label{eq:p-q-bound}
        \| p - q \|
        = \| (1-m) (\sigma-\pi) + \pi - q \| 
        \leq (1-m) \|\sigma-\pi\| + \| \pi - q \| 
        \leq \sqrt{2}(1-m) + \| \pi - q \|,
    \end{align}
    where in the last step we used that $\pi$ and $\sigma$ have disjoint supports, and thus
    \begin{align*}
        \|\sigma-\pi\|^2 =
        \|\sigma\|^2+\|\pi\|^2 \leq
        \|\sigma\|_1^2+\|\pi\|_1^2 = 2.
    \end{align*}

    To bound the term $\| \pi - q \|$, observe that both probability vectors are supported only on $\winnernoarg$.
    For a vector $v \in \reals^K$, let $v_{\winnernoarg} \in \reals^{\left| \winnernoarg \right|}$ be the restriction of $v$ to $\winnernoarg$, indexed by $\winnernoarg$ (that is, $\left( v_{\winnernoarg} \right)_k = v_k$ for $k \in \winnernoarg $).
    With this notation, we have
    \begin{align*}
        \| \pi - q \| = 
        \| \pi_{\winnernoarg} - q_{\winnernoarg} \|.
    \end{align*}
    Finally, observe that for $k \in \winnernoarg$,
    \begin{align*}
        (\pi_{\winnernoarg})_k =
        \frac{p_k}{m} =
        \frac{\softmax(\eta s)_k}{\sum_{l \in \winnernoarg} \softmax(\eta s)_l} = 
        \frac{\exp(\eta s_k)}{\sum_{l \in \winnernoarg} \exp(\eta s_l)} = 
        \softmax(\eta s_{\winnernoarg})_k.
    \end{align*}
    Additionally, letting $\tilde s_{\max} \coloneqq \max_{l \in \{1, \dots, K\}} \tilde s_l$, again for $k \in \winnernoarg$, we also have
    \begin{align*}
        (q_{\winnernoarg})_k = 
        \frac{1}{\big| \winnernoarg \big|} = 
        \frac{\exp(\eta \tilde s_{\max})}{\big| \winnernoarg \big| \exp(\eta \tilde s_{\max})} = 
        \softmax(\eta \tilde s_{\winnernoarg})_k.
    \end{align*}
    Therefore, using $\mathrm{Lip}(\softmax) \leq \frac{1}{2}$ (by Lemma~\ref{lemma:softmax-lipschitz}),
    \begin{align*}
        \| \pi_{\winnernoarg} - q_{\winnernoarg} \|
        = \| \softmax(\eta s_{\winnernoarg}) - \softmax(\eta \tilde s_{\winnernoarg}) \| 
        \leq \eta \; \mathrm{Lip}(\softmax) \; \| s_{\winnernoarg} - \tilde s_{\winnernoarg} \| 
        \leq \frac{\eta}{2} \; \| s - \tilde s \|,
    \end{align*}
    Plugging this back in Eq.~\eqref{eq:p-q-bound}, we obtain
    \begin{align} \label{eq:softmax-hardmax-almost-conclusion}
        \| p - q \| \leq 
        \sqrt{2}(1-m) + \frac{\eta}{2} \| s - \tilde s \| = 
        \sqrt{2} \sum_{k \notin \winnernoarg} p_k + \frac{\eta}{2} \| s - \tilde s \|,
    \end{align}
    as claimed.
\end{proof}

We now turn to the proof of the shadowing lemma.
The argument proceeds by induction over the first $T$ iterations.

\begin{proof}[Proof of Lemma~\ref{lemma:multiclass-shadowing}]
    Define $\Zomega{t} = \| \ZZeta{t} - \ZZinf{t} \|_\mathrm{F}$ and recall that 
    \begin{align*}
        \seta{t}_{ik} = \langle \Zzeta{t}_k, x_i \rangle, 
        \qquad
        \sinf{t}_{ik} = \langle \Zzinf{t}_k, x_i \rangle,
        \qquad \textrm{ and } \qquad 
        \winner{t}_i = \argmax{k \in [K]} \, \sinf{t}_{ik}.
    \end{align*}
    For all $t \in \{0, \dots, T\}$, all $i\in[n]$, all $l \in \winner{t}_i$ and all $k \notin \winner{t}_i$, define 
    \begin{align*}
        \nueta{t}_{ikl} \coloneqq \Seta{t}_{i l} - \Seta{t}_{i k}
        \qquad \textrm{ and } \qquad
        \nuinf{t}_{ikl} = \Sinf{t}_{il} - \Sinf{t}_{ik},
    \end{align*}
    and observe that by the choice of indices, $\nuinf{t}_{ikl} > 0$.
    Now, observe that using~\ref{ass:multiclass1},
    \begin{align} \label{eq:score-diff-bounded-by-omega}
        \| \Seta{t}_i - \Sinf{t}_i \| = 
        \| (\ZZeta{t} - \ZZinf{t}) x_i \| \leq
        \| \ZZeta{t} - \ZZinf{t} \|_\mathrm{F} = \Zomega{t},
    \end{align}
    which in particular implies that $| \Seta{t}_{ik} - \Sinf{t}_{ik} | \leq \Zomega{t}$ for all $k \in [K]$.
    Then, using the inequality $\| a-b \| \leq \|a\| + \|b\| \leq \sqrt{2} \left( \|a\|^2 + \|b\|^2 \right)^{1/2} $, we also have
    \begin{align*}
        \left| \nuinf{t}_{ikl} - \nueta{t}_{ikl} \right|
        &= \left| \Seta{t}_{ik} - \Sinf{t}_{ik} -  (\Seta{t}_{il} - \Sinf{t}_{il}) \right| \\ 
        &= \left| \left \langle (\Zzeta{t}_k - \Zzinf{t}_k) - (\Zzeta{t}_l - \Zzinf{t}_l), x_i \right \rangle \right| \\
        &\leq \left\| \left( \Zzeta{t}_k - \Zzinf{t}_k \right) -  \left( \Zzeta{t}_l - \Zzinf{t}_l \right) \right\| \; \|x_i\| \\
        &\leq \sqrt{2} \left( \| \Zzeta{t}_k - \Zzinf{t}_k \|^2 + \| \Zzeta{t}_l - \Zzinf{t}_l \|^2 \right)^{1/2} \\
        &\leq \sqrt{2} \| \ZZeta{t} - \ZZinf{t} \|_\mathrm{F} 
        = \sqrt{2} \Zomega{t},
    \end{align*}
    where in the last bound we used that $k \neq l$.
    Thus, if we assume that $\Zomega{t} < \frac{\delta}{2\sqrt{2}}$, then it follows that 
    \begin{align*}
        \nuinf{t}_{ikl} - \frac{\delta}{2}
        < \nuinf{t}_{ikl} - \sqrt{2} \Zomega{t}
        \leq \nueta{t}_{ikl} .
    \end{align*}
    Then, since $\nuinf{t}_{ikl} > 0$, from the definition of $\delta$ we have $\nuinf{t}_{ikl} \geq \delta$ and so also $\nueta{t}_{ikl} > \frac{\delta}{2}$.
    This shows that
    \begin{align} \label{eq:small-omega-implication}
        \Zomega{t} < \frac{\delta}{2 \sqrt{2}} 
        \quad\implies\quad
        \nueta{t}_{ikl} > \frac{\delta}{2}
    \end{align}
    In particular, if $\Zomega{T} < \frac{\delta}{2 \sqrt{2}}$, then since $\Sinf{T}_{i y_i} - \Sinf{T}_{ik} \geq \delta$ for all $i \in [n]$ and $k \neq y_i$ (by definition of $\delta$~\eqref{eq:delta-def}), we obtain immediately that $\winner{T}_i = \{ y_i \}$ and hence that $\Seta{T}_{i y_i} - \Seta{T}_{ik} > \frac{\delta}{2}$ for all $k \neq y_i$.
    Therefore, it suffices to show $\Zomega{T} < \frac{\delta}{2\sqrt{2}}$.
    
    \bigskip
    From the definition of the dynamical systems, we have
    \begin{align} \label{eq:multiclass-omega-bound}
        \Zomega{t+1}
        \leq \Zomega{t} + \frac{1}{n} \left\| P(\eta \ZZeta{t}) - Q(\ZZinf{t}) \right\|_\mathrm{F} \, \| X \|_\mathrm{F} 
        \leq \Zomega{t} + \frac{1}{\sqrt{n}} \left\| P(\eta \ZZeta{t}) - Q(\ZZinf{t}) \right\|_\mathrm{F},
    \end{align}
    where in the last step we used~\ref{ass:multiclass1}.

    Let $\pp{t}_i = \softmax(\eta \Seta{t}_i)$ be the $i$-th row of $P(\eta \ZZeta{t})$ and $\qq{t}_i = \hardmax(\Sinf{t}_i)$ be the $i$-th row of $Q(\ZZinf{t})$.
    By Lemma~\ref{eq:softmax-hardmax-bound}, and using Eq.~\eqref{eq:score-diff-bounded-by-omega}, we have
    \begin{align} \label{eq:softmax-hardmax-row-bound}
        \| \pp{t}_i - \qq{t}_i \|_2
        &\leq \sqrt{2} \sum_{k \notin \winner{t}_i} \pp{t}_{ik} + \frac{\eta}{2} \| \Seta{t}_i - \Sinf{t}_i \| \notag \\
        &\leq \sqrt{2} \sum_{k \notin \winner{t}_i} \pp{t}_{ik} + \frac{\eta}{2} \Zomega{t}.
    \end{align}
    Using this, Eq.~\eqref{eq:multiclass-omega-bound} and the condition on the step size, 
    we will show inductively that for all $1 \leq t \leq T$,
    \begin{align} \label{eq:multiclass-induction}
        \Zomega{t} \leq
        \sqrt{2} (K-1) \; \exp\left(-\eta \frac{\delta}{2}\right) \; \sum_{j=0}^{t-1} \left( 1 + \frac{\eta}{2} \right)^j < \frac{\delta}{2\sqrt{2}}.
    \end{align}

    As a first step, observe that if $\Zomega{t} < \frac{\delta}{2\sqrt{2}}$, then from Eq.~\eqref{eq:small-omega-implication} we know that for all $i \in [n]$, any $k \notin \winner{t}_i$ and any $l \in \winner{t}_i$, it holds that
    $\Seta{t}_{il} - \Seta{t}_{ik} > \frac{\delta}{2}$.
    Therefore, as we can always choose some $l \in \winner{t}_i$, for all $k \notin \winner{t}_i$ we have that
    \begin{align} \label{eq:non-winner-prob-small}
        P(\eta \ZZeta{t})_{ik} =
        \frac{\exp(\eta \Seta{t}_{ik})}{\sum_{j=1}^K \exp(\eta \Seta{t}_{ij})} \leq
        \frac{1}{\exp(\eta (\Seta{t}_{il} - \Seta{t}_{ik}))} \leq
        \exp \left( -\eta \frac{\delta}{2} \right).
    \end{align}
    Thus, if $\Zomega{t} < \frac{\delta}{2\sqrt{2}}$, Eq.~\eqref{eq:softmax-hardmax-row-bound} becomes
    \begin{align*}
        \| \pp{t}_i - \qq{t}_i \|_2
        &\leq \sqrt{2} \Big( K - \left| \winner{t}_i \right| \Big) \exp\left(-\eta \frac{\delta}{2} \right) + \frac{\eta}{2} \Zomega{t} \\
        &\leq \sqrt{2} (K-1) \exp\left(-\eta \frac{\delta}{2} \right) + \frac{\eta}{2} \Zomega{t}.
    \end{align*}
    Plugging this back in Eq.~\eqref{eq:multiclass-omega-bound}, we obtain that if $\Zomega{t} < \frac{\delta}{2\sqrt{2}}$, then
    \begin{align} \label{eq:multiclass-omega-bound-refined}
        \Zomega{t+1} \leq
        \left( 1 + \frac{\eta}{2} \right) \Zomega{t} + \sqrt{2} (K-1) \exp\left( -\eta \frac{\delta}{2} \right).
    \end{align}
    We exploit this to show the two inequalities from Eq.~\eqref{eq:multiclass-induction}.

    For the base case $t=1$, since $\Zomega{0} = 0$, Eq.~\eqref{eq:multiclass-omega-bound-refined} gives
    \begin{align*}
        \Zomega{1} \leq \sqrt{2} (K-1) \exp\left(-\eta\frac{\delta}{2} \right),
    \end{align*}
    and this is less than $\frac{\delta}{2\sqrt{2}}$ given the condition on the step size (this will become clear momentarily).

    Now, assume that the conditions from Eq.~\eqref{eq:multiclass-induction} hold at time $t$.
    We show that they also hold at $t+1$.
    Since $\Zomega{t} < \frac{\delta}{2\sqrt{2}}$ by assumption, Eq.~\eqref{eq:multiclass-omega-bound-refined} combined with the assumed bound for $\Zomega{t}$ gives
    \begin{align*}
        \Zomega{t+1}
        &\leq \left( 1 + \frac{\eta}{2} \right) \Zomega{t} + \sqrt{2} (K-1) \exp\left( -\eta \frac{\delta}{2} \right) \\
        &\leq \sqrt{2} (K-1) \; \exp\left(-\eta \frac{\delta}{2}\right) \; \sum_{j=1}^{t} \left( 1 + \frac{\eta}{2} \right)^j + \sqrt{2} (K-1) \exp\left( -\eta \frac{\delta}{2} \right) \\
        &= \sqrt{2} (K-1) \; \exp\left(-\eta \frac{\delta}{2}\right) \; \sum_{j=0}^{t} \left( 1 + \frac{\eta}{2} \right)^j,
    \end{align*}
    which shows the first part.
    For the second part, we need to show that this is less than $\frac{\delta}{2\sqrt{2}}$.
    For this, since we want to show the statement for all $1 \leq t \leq T$, it suffices to show that 
    \begin{align*}
        \sqrt{2} (K-1) \; \exp\left(-\eta \frac{\delta}{2}\right) \; \sum_{j=0}^{T-1} \left(1 + \frac{\eta}{2}\right)^j < \frac{\delta}{2\sqrt{2}}.
    \end{align*}
    Using $\sum_{j=0}^{T-1} (1+a)^j \leq \frac{(1+a)^T}{a}$ for $a > 0$, we have
    \begin{align*}
        \sqrt{2} (K-1) \; \exp\left(-\eta \frac{\delta}{2}\right) \; \sum_{j=0}^{T-1} \left(1 + \frac{\eta}{2} \right)^j &\leq 
        \frac{2\sqrt{2} (K-1)}{\eta} \; \exp\left(-\eta \frac{\delta}{2}\right) \left( 1 + \frac{\eta}{2} \right)^T \\ 
        &= \frac{2\sqrt{2} (K-1)}{\eta} \exp\left(-\eta \frac{\delta}{2} + T \log\left(1 + \frac{\eta}{2}\right) \right)
    \end{align*}
    Using $\eta \geq 2r \log\left( 2r \log\left( 2r \right) \right)$, Lemma~\ref{lemma:eta-condition} implies that $-\eta \frac{\delta}{2} + T \log\left(1 + \frac{\eta}{2}\right) < 0$, so
    \begin{align*}
        \sqrt{2} (K-1) \; \exp\left(-\eta \frac{\delta}{2}\right) \; \sum_{j=0}^{T-1} \left( 1 + \frac{\eta}{2} \right)^j < \frac{2\sqrt{2} (K-1)}{\eta}.
    \end{align*}
    Since $\eta \geq \frac{8 (K-1)}{\delta}$, this quantity is indeed less than $\frac{\delta}{2\sqrt{2}}$, as claimed.
    This concludes the proof.
\end{proof}

\subsection{Transferring the guarantees back to the finite-$\eta$ dynamics}

The shadowing lemma immediately gives an exponential bound on the loss at time $T$.
This is the key step in deriving the main theorem from the finite-time termination of the hard perceptron dynamics.

\begin{proposition} \label{prop:loss-decrease-with-shadowing}
    Under the setting of Lemma~\ref{lemma:multiclass-shadowing}, if GD is initialized at $\WWeta{0} = 0$, then
    \begin{align*}
        f(\WWeta{T}) < (K-1) \, e^{ -\frac{\eta\delta}{2} }.
    \end{align*}
\end{proposition}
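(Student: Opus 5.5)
The plan is to combine the margin guarantee of the shadowing lemma (Lemma~\ref{lemma:multiclass-shadowing}) with the explicit form of the cross-entropy loss. Recall that $\WWeta{T} = \eta \ZZeta{T}$. Then, for each $i \in [n]$, the per-sample loss is
\begin{align*}
    -\log P(\WWeta{T})_{iy_i}
    = \log\Bigl( 1 + \sum_{k \neq y_i} \exp\bigl( \eta (\Seta{T}_{ik} - \Seta{T}_{iy_i}) \bigr) \Bigr),
\end{align*}
which we obtain by dividing the numerator and denominator of the softmax by $\exp(\eta \Seta{T}_{iy_i})$.

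First, under the step size condition, Lemma~\ref{lemma:multiclass-shadowing} gives $\Seta{T}_{iy_i} - \Seta{T}_{ik} \geq \delta/2$ for all $i$ and all $k \neq y_i$. Inspecting its proof, we in fact get the strict inequality $> \delta/2$, since there $\Zomega{T} < \delta/(2\sqrt{2})$ yields $\nueta{T}_{ikl} > \delta/2$ via~\eqref{eq:small-omega-implication}, with $\winner{T}_i = \{y_i\}$. Each exponent is therefore strictly less than $-\eta\delta/2$, so the inner sum is strictly below $(K-1)e^{-\eta\delta/2}$.

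Second, apply $\log(1+x) \leq x$ for $x \geq 0$. This gives that each per-sample loss is strictly less than $(K-1)e^{-\eta\delta/2}$. Averaging over $i \in [n]$ preserves the strict inequality, which yields $f(\WWeta{T}) < (K-1)e^{-\eta\delta/2}$.

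The only delicate point is the strictness. If the shadowing lemma is used only as stated (with $\geq \delta/2$), we still get strictness because $\log(1+x) < x$ for $x > 0$, and here $x > 0$ since $K \geq 2$. So either route closes the argument, and the rest is routine.
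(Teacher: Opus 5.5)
Your proposal is correct and follows essentially the same argument as the paper: write $\WWeta{T} = \eta \ZZeta{T}$, use the $\delta/2$ margin from the shadowing lemma to bound $\sum_{k \neq y_i} \exp\bigl(\eta(\Seta{T}_{ik} - \Seta{T}_{iy_i})\bigr)$ by $(K-1)e^{-\eta\delta/2}$, and conclude with $\log(1+x) < x$ for $x > 0$. Your second route to strictness is exactly the paper's, and your first (the strict margin $> \delta/2$ obtained inside the shadowing proof) is also valid.
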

\begin{proof}
    For all $i \in [n]$, Lemma~\ref{lemma:multiclass-shadowing} gives
    \begin{align*}
        \langle \Zzeta{T}_{y_i}, x_i \rangle \geq 
        \langle \Zzeta{T}_k, x_i \rangle + \frac{\delta}{2} 
        \quad \textrm{ for all } k \neq y_i.
    \end{align*}
    Going back to the original variable $\Wweta{T}_k = \eta \Zzeta{T}_k$, we obtain
    \begin{align*}
        \langle \Wweta{T}_{y_i} - \Wweta{T}_k , x_i \rangle \geq \frac{\eta \delta}{2}
        \quad \textrm{ for all } k \neq y_i.
    \end{align*}
    This implies that $\exp(\langle \Wweta{T}_{k} - \Wweta{T}_{y_i}, x_i \rangle) \leq \exp\left(-\frac{\eta \delta}{2}\right)$ for all $k \neq y_i$, and so by Eq.~\eqref{eq:multiclass-loss}, the softmax probabilities associated to the correct class satisfy
    \begin{align*}
        [P(\WWeta{T})]_{i y_i}
        = \frac{\exp(\langle \Wweta{T}_{y_i}, x_i \rangle)}{\sum_{j=1}^K \exp(\langle \Wweta{T}_{j}, x_i \rangle)} 
        = \frac{1}{1 + \sum_{j \in [K], j \neq y_i} \exp(\langle \Wweta{T}_{j} - \Wweta{T}_{y_i}, x_i \rangle)} 
        \geq
        \frac{1}{1 + (K-1) e^{ -\frac{\eta \delta}{2} }}.
    \end{align*}
    Since $-\log(\cdot)$ is decreasing, and using $\log(1+x) < x$ for all $x>0$, this implies that 
    \begin{align*}
        f(\WWeta{T}) = 
        -\frac{1}{n} \sum_{i=1}^n \log\left( P(\WWeta{T})_{i y_i} \right) \leq
        \log\left(1 + (K-1) e^{ -\frac{\eta \delta}{2} } \right) < 
        (K-1) e^{-\frac{\eta \delta}{2} },
    \end{align*}
    which is the claimed bound.
\end{proof}

We now use Proposition~\ref{prop:loss-decrease-with-shadowing} to prove Theorem~\ref{thm:multiclass-main}.
Accordingly, choosing $\eta$ so that this bound is less than $1 / (2\eta)$ gives $\tau \leq T$, while choosing it so that the bound is less than $\varepsilon$ gives $f(\WWeta{T}) \leq \varepsilon$.

\begin{proof}[Proof of Theorem~\ref{thm:multiclass-main}, part (i)]
    The first two conditions on the step size ensure that we can apply Proposition~\ref{prop:loss-decrease-with-shadowing}, and we obtain $f(\WWeta{T}) < (K-1) \exp\left(-\frac{\eta \delta}{2} \right)$.
    Then, to show that $\tau \leq T$, it suffices to show that under the third assumption on the step size it holds that $(K-1) \exp\left(-\frac{\eta \delta}{2} \right) < \frac{1}{2\eta}$, which is equivalent to showing $\log\left(2 \eta (K-1) \right) < \frac{\eta \delta}{2}$.    

    Let $x = \frac{\eta \delta}{2}$ and observe that the condition $\eta \geq \frac{4}{\delta} \log\frac{4(K-1)}{\delta}$ implies that $x \geq 2 \log\frac{4(K-1)}{\delta}$.
    Using this together with $\log(\beta) < \frac{\beta}{2}$ for any $\beta > 0$ we obtain 
    \begin{align*}
        \log(2\eta(K-1)) = 
        \log\left(x \frac{4 (K-1)}{\delta}\right) = 
        \log(x) + \log\left( \frac{4 (K-1)}{\delta} \right) < 
        \frac{x}{2} + \frac{x}{2} = x,
    \end{align*}
    which, as argued before, shows that $f(\WWeta{T}) < \frac{1}{2\eta}$.
\end{proof}

\begin{proof}[Proof of Theorem~\ref{thm:multiclass-main}, part (ii)]
    Again, the first two conditions on the step size ensure that $f(\WWeta{T}) < (K-1) \exp\left(-\frac{\eta \delta}{2} \right)$.
    Then, observe that the condition $\eta \geq \frac{2}{\delta} \log\frac{K-1}{\varepsilon}$ 
    implies immediately that 
    $(K-1) \exp\left(-\frac{\eta \delta}{2} \right) \leq \varepsilon$, 
    which shows indeed that $f(\WWeta{T}) < \varepsilon$.
\end{proof}

\section*{Acknowledgements}
We thank Pierre Marion for early discussions about acceleration by large step sizes for binary logistic regression, and Guillaume Wang and  Christopher Criscitiello for further helpful conversations.
AAM thanks Pontus Giselsson and the organizers for the invitation to the ELLIIT Focus Period on Optimization for Learning at Lund University.
AAM is grateful to Michael Crawshaw for insightful discussions on the step-size-independent transition time of binary logistic regression in Lund, which led to the multiclass setting considered here.

We used ChatGPT (August 2026) to sharpen several estimates in our proofs.
The tool also produced the counterexample in Appendix~\ref{appendix:counterexample}, which we subsequently verified, decomposed, and presented ourselves.

\newpage
\bibliographystyle{abbrvnat}
\bibliography{../bibtex/boumal}

\appendix

\section{Equivalence to logistic regression when $K=2$} \label{appendix:K2-binary-equivalence}
Consider a binary classification dataset $\{ (x_i, \widetilde y_i) \}_{i=1}^n$, where $x_i \in \reals^d$ and $\widetilde y_i \in \{-1, 1\}$.
For $\wbin \in \reals^d$, define the binary logistic loss
\begin{align} \label{eq:binary-logistic-loss}
    f_{\mathrm{bin}}(\wbin) \coloneqq&
    \frac{1}{n}
    \sum_{i=1}^n
    \log\left(
        1+\exp\left(-\widetilde y_i\langle \wbin,x_i\rangle\right)
    \right).
\end{align}
The corresponding maximum margin is $\gamma_{\mathrm{bin}} \coloneqq \max_{\|\wbin\|\leq 1} \min_{i\in[n]} \widetilde y_i\langle \wbin,x_i\rangle$.

For the multiclass problem~\eqref{eq:multiclass-loss} with $K=2$, we associate to each label $y_i\in\{1,2\}$ the binary label
$\widetilde y_i=1$ if $y_i=1$ and $\widetilde y_i=-1$ if $y_i=2$.
We denote by $\gamma$ the multiclass maximum margin from Eq.~\eqref{eq:multiclass-max-margin}.

\begin{proposition} \label{prop:binary-multiclass-equivalence}
    Let $w_1, w_2$ be the rows of $W \in \reals^{2 \times d}$.
    GD on the multiclass cross-entropy loss~\eqref{eq:multiclass-loss} with $K=2$ is equivalent to GD on the binary logistic regression loss~\eqref{eq:binary-logistic-loss} under the identifications
    \begin{align*}
        \wbin=w_1-w_2 
        \qquad
        f(W) &=
        f_{\mathrm{bin}}(\wbin),
        \qquad
        \eta_{\mathrm{bin}} =
        2\eta,
        \qquad
        \gamma_{\mathrm{bin}} =
        \frac{\gamma}{\sqrt{2}}.
    \end{align*}
    Moreover, if multiclass GD is initialized at $\WWeta{0}=0$ and we define
    $\wbin_t=\Wweta{t}_1-\Wweta{t}_2$, then
    \begin{align*}
        \Wweta{t}_1 &=
        \frac{\wbin_t}{2},
        \qquad
        \Wweta{t}_2 =
        -\frac{\wbin_t}{2}
    \end{align*}
    for all $t\geq 0$, and $(\wbin_t)_{t\geq 0}$ is the binary logistic GD trajectory with step size $\eta_{\mathrm{bin}}=2\eta$.
\end{proposition}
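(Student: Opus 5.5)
The plan is to verify the claimed equivalence by direct computation: first at the level of the loss, then the gradient, then the full trajectory by induction. I would close with a separate argument for the margin identity.

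\textbf{Loss identity.} For $K=2$ and $y_i=1$, the softmax gives $P(W)_{i1} = 1/(1+\exp(-\langle w_1-w_2,x_i\rangle))$. Hence the $i$-th loss term is $\log(1+\exp(-\widetilde y_i\langle \wbin,x_i\rangle))$ with $\wbin=w_1-w_2$. The case $y_i=2$ is symmetric with $\widetilde y_i=-1$. Summing over $i$ gives $f(W)=f_{\mathrm{bin}}(w_1-w_2)$.

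\textbf{Gradient and trajectory.} I would write $f = f_{\mathrm{bin}}\circ A$ with the linear map $A(W)=w_1-w_2$. The chain rule gives $\nabla f(W) = A^*\nabla f_{\mathrm{bin}}(\wbin)$, i.e.\ the rows of $\nabla f(W)$ are $(g,-g)$ with $g=\nabla f_{\mathrm{bin}}(\wbin)$. Alternatively this can be read off from Eq.~\eqref{eq:gd-dynamics}, since $Y-P$ has rows summing to zero. A GD step on $W$ then changes $w_1-w_2$ by $-\eta(g-(-g)) = -2\eta g$, which is a binary GD step with $\eta_{\mathrm{bin}}=2\eta$. The same computation shows that $w_1+w_2$ is invariant along GD. Starting from $\WWeta{0}=0$, induction on $t$ gives $\Wweta{t}_1+\Wweta{t}_2=0$, so $\Wweta{t}_1=\wbin_t/2$ and $\Wweta{t}_2=-\wbin_t/2$. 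Here $(\wbin_t)$ obeys the binary recursion with step $2\eta$ from $\wbin_0=0$.

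\textbf{Margin identity (main obstacle).} The multiclass margin constraint for $K=2$ reads $\widetilde y_i\langle w_1-w_2,x_i\rangle\ge\gamma$. So $\gamma_{\max}$ is the maximum of $\min_i \widetilde y_i\langle w_1-w_2,x_i\rangle$ over $\|w_1\|^2+\|w_2\|^2\le 1$. For fixed $v=w_1-w_2$, the identity $\|w_1\|^2+\|w_2\|^2 = \tfrac12\|w_1-w_2\|^2+\tfrac12\|w_1+w_2\|^2$ shows the norm is minimized at $w_1=-w_2=v/2$, with value $\|v\|^2/2$. Thus the feasible set of $v$ is exactly $\{\|v\|\le\sqrt2\}$, and by homogeneity $\gamma_{\max}=\sqrt2\,\gamma_{\mathrm{bin}}$, i.e.\ $\gamma_{\mathrm{bin}}=\gamma/\sqrt2$.

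This step needs the most care, because it is where one must check that the optimization over $W$ really reduces to one over $v$ with the correct radius $\sqrt2$. The remaining steps are routine bookkeeping.
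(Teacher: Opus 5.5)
Your proposal is correct and follows essentially the same route as the paper. Both arguments use the same three steps: the loss identity via $P(W)_{iy_i}$; the chain rule giving row gradients $(g,-g)$, so that $w_1-w_2$ takes a binary step of size $2\eta$ while $w_1+w_2$ stays at zero; and the identity $\|w_1-w_2\|^2 = 2(\|w_1\|^2+\|w_2\|^2)-\|w_1+w_2\|^2$, which shows that $\{w_1-w_2 \mid \|W\|_\mathrm{F}\le 1\}$ is exactly the ball of radius $\sqrt{2}$, hence $\gamma_{\mathrm{bin}}=\gamma/\sqrt{2}$. The only difference is the order in which the steps are presented, and the margin step is no harder than the others.
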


\begin{proof}
    We first show the equivalence of the losses.
    For $K=2$, we have
    \begin{align*}
        P(W)_{iy_i} =
        \frac{e^{\langle w_{y_i},x_i\rangle}}{e^{\langle w_{y_i},x_i\rangle}+e^{\langle w_{3-y_i},x_i\rangle}} =
        \frac{1}{1+e^{-\langle w_{y_i}-w_{3-y_i},x_i\rangle}} =
        \frac{1}{1+e^{-\widetilde y_i\langle w_1-w_2,x_i\rangle}},
    \end{align*}
    where we used
    $w_{y_i}-w_{3-y_i}=\widetilde y_i(w_1-w_2)$.
    Therefore, using the definition of $f$ in Eq.~\eqref{eq:multiclass-loss}, we obtain
    \begin{align*}
        f(W) =
        \frac{1}{n} \sum_{i=1}^n \log\left( 1+\exp\left(-\widetilde y_i\langle w_1-w_2,x_i\rangle \right) \right) =
        f_{\mathrm{bin}}(w_1-w_2).
    \end{align*}

    We next compare the two maximum margins.
    For any $W \in \reals^{2 \times d}$,
    \begin{align*}
        \|w_1-w_2\|^2 =
        2 \left( \|w_1\|^2 + \|w_2\|^2 \right) - \|w_1 + w_2\|^2 \leq
        2 \| W \|_\mathrm{F}^2.
    \end{align*}
    Thus, $\|W\|_\mathrm{F}\leq 1$ implies $\|w_1-w_2\|\leq\sqrt{2}$.
    Conversely, for any $\wbin\in\reals^d$ such that $\|\wbin\|\leq\sqrt{2}$, the choice
    $w_1=\frac{\wbin}{2}$, $w_2=-\frac{\wbin}{2}$ satisfies
    $w_1-w_2=\wbin$ and $\|W\|_\mathrm{F}^2=\frac{1}{2}\|\wbin\|^2\leq 1$, so
    \begin{align*}
        \left\{ w_1-w_2 \,\middle|\, \|W\|_\mathrm{F}\leq 1 \right\} =
        \left\{ \wbin\in\reals^d \,\middle|\, \|\wbin\|\leq\sqrt{2} \right\}.
    \end{align*}

    For $K=2$, the margin of example $i$ is
    $\widetilde y_i\langle w_1-w_2,x_i\rangle$.
    It follows from the previous identity and the definition of $\gamma_{\mathrm{bin}}$ that
    \begin{align*}
        \gamma =
        \max_{\|\wbin\|\leq\sqrt{2}}
        \min_{i\in[n]} \widetilde y_i\langle \wbin,x_i\rangle =
        \sqrt{2}
        \max_{\|\wbin\|\leq 1}
        \min_{i\in[n]} \widetilde y_i\langle \wbin,x_i\rangle =
        \sqrt{2}\gamma_{\mathrm{bin}}.
    \end{align*}
    Therefore, $\gamma_{\mathrm{bin}}=\frac{\gamma}{\sqrt{2}}$.

    Finally, since $f(W)=f_{\mathrm{bin}}(w_1-w_2)$, the chain rule gives
    \begin{align*}
        \nabla_{w_1} f(W) &=
        \nabla f_{\mathrm{bin}}(w_1 - w_2),
        \qquad
        \nabla_{w_2} f(W) =
        -\nabla f_{\mathrm{bin}}(w_1 - w_2).
    \end{align*}
    Consequently, for $\wbin_t = \Wweta{t}_1 - \Wweta{t}_2$,
    \begin{align*}
        \wbin_{t+1} &=
        \wbin_t-2\eta\nabla f_{\mathrm{bin}}(\wbin_t).
    \end{align*}
    Thus, $(\wbin_t)_{t\geq 0}$ is precisely the binary logistic GD trajectory with
    $\eta_{\mathrm{bin}}=2\eta$.

    Moreover, the two row gradients sum to zero, so
    \begin{align*}
        \Wweta{t+1}_1 + \Wweta{t+1}_2 &=
        \Wweta{t}_1 + \Wweta{t}_2.
    \end{align*}
    Since $\WWeta{0} = 0$, it follows that $\Wweta{t}_1 + \Wweta{t}_2 = 0$ for all $t\geq 0$.
    Combining this with $\wbin_t = \Wweta{t}_1 - \Wweta{t}_2$ gives
    \begin{align*}
        \Wweta{t}_1 &=
        \frac{\wbin_t}{2},
        \qquad
        \Wweta{t}_2 =
        -\frac{\wbin_t}{2},
    \end{align*}
    as claimed.
\end{proof}

\section{Auxiliary results about the multiclass cross-entropy loss} \label{appendix:gd-dynamics-lemma-proof}

\begin{lemma} \label{lemma:softmax-grad}
    For $P$ in~\eqref{eq:multiclass-loss}, it holds that 
    \begin{align}
        \nabla_{w_j} P(W)_{ik} = 
        \big( \mathbbm{1}_{\{ k = j\}} - P(W)_{ij} \big) \, P(W)_{ik} \; x_i.
    \end{align}
\end{lemma}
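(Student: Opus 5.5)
This is a direct computation from the definition of the softmax in~\eqref{eq:multiclass-loss}. The plan is to write $P(W)_{ik} = \exp(s_{ik})/Z_i$, where $s_{ij} = \langle w_j, x_i\rangle$ is the score and $Z_i = \sum_{l=1}^K \exp(s_{il})$ is the normalizer. We differentiate first with respect to the scalar score $s_{ij}$ and then apply the chain rule through the linear map $w_j \mapsto s_{ij}$. Only the score $s_{ij}$ depends on $w_j$, and $\nabla_{w_j} s_{ij} = x_i$. Hence
\begin{align*}
    \nabla_{w_j} P(W)_{ik} = \frac{\partial P(W)_{ik}}{\partial s_{ij}} \, x_i .
\end{align*}
The lemma therefore reduces to the scalar identity $\partial P_{ik}/\partial s_{ij} = (\mathbbm{1}_{\{k=j\}} - P_{ij}) P_{ik}$.

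To get this identity, the cleanest route is logarithmic differentiation. We have $\log P_{ik} = s_{ik} - \log Z_i$. Differentiating in $s_{ij}$ gives $\mathbbm{1}_{\{k=j\}} - \exp(s_{ij})/Z_i = \mathbbm{1}_{\{k=j\}} - P_{ij}$. Multiplying by $P_{ik} > 0$ yields $\partial P_{ik}/\partial s_{ij}$. Alternatively, one can apply the quotient rule directly and split into the cases $k=j$ and $k\neq j$. In the first case the result is $P_{ik}(1-P_{ik})$; in the second it is $-P_{ij}P_{ik}$.

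Combining the two steps gives the claimed formula. There is no real obstacle here. The only point requiring care is bookkeeping: $P(W)_{ik}$ depends on $w_j$ for every $j$, because every row of $W$ enters through the normalizer $Z_i$, and not only for $j=k$. This is exactly what produces the $-P_{ij}$ term for all $j$.
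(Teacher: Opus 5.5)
Your proof is correct and is essentially the paper's direct computation: the paper applies the quotient rule separately in the cases $j \neq k$ and $j = k$ (your stated alternative), whereas your logarithmic differentiation of $\log P_{ik} = s_{ik} - \log Z_i$ handles both cases at once. In both versions, the chain rule through the score $s_{ij} = \langle w_j, x_i \rangle$ supplies the factor $x_i$.
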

\begin{proof}
    In the case when $j \neq k$, we have
    \begin{align*}
        \nabla_{w_j} P(W)_{ik} = 
          - \frac{\exp(\langle w_k, x_i \rangle)}{ \big( \sum_{\ell=1}^K \exp(\langle w_\ell, x_i \rangle) \big)^2 } \; \exp(\langle w_j, x_i \rangle) \; x_i = 
          - P(W)_{ik} \; P(W)_{ij} \; x_i.
    \end{align*}
    Otherwise, if $j = k$, we compute
    \begin{align*}
        \nabla_{w_j} P(W)_{ik} = 
        \frac{\exp(\langle w_k, x_i \rangle) \sum_{\ell=1}^K \exp(\langle w_\ell, x_i \rangle) - \exp(\langle w_k, x_i \rangle)^2}{\big( \sum_{\ell=1}^K \exp(\langle w_\ell, x_i \rangle) \big)^2} \; x_i = 
        \big( P(W)_{ik} - P(W)_{ik}^2 \big) \; x_i.
    \end{align*}
    Combining both cases, we obtain the conclusion.
\end{proof}

\begin{lemma} \label{lemma:grad-f}
    The gradient of $f$ in~\eqref{eq:multiclass-loss} is $\nabla f(W) = -\frac{1}{n} \big(Y - P(W)\big)^\top X $.
\end{lemma}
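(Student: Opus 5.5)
The proof is a direct computation based on the chain rule. First I will write the loss per sample as $f(W) = -\frac{1}{n}\sum_{i=1}^n \sum_{k=1}^K Y_{ik}\log P(W)_{ik}$. Then I differentiate with respect to a single row $w_j$, using Lemma~\ref{lemma:softmax-grad}:
\begin{align*}
    \nabla_{w_j} \log P(W)_{ik}
    = \frac{\nabla_{w_j} P(W)_{ik}}{P(W)_{ik}}
    = \big(\mathbbm{1}_{\{k=j\}} - P(W)_{ij}\big)\, x_i .
\end{align*}
Substituting this into the sum gives
\begin{align*}
    \nabla_{w_j} f(W) = -\frac{1}{n}\sum_{i=1}^n \sum_{k=1}^K Y_{ik}\big(\mathbbm{1}_{\{k=j\}} - P(W)_{ij}\big)\, x_i .
\end{align*}

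Next I simplify the inner sum over $k$ using the one-hot structure~\eqref{eq:one-hot-labels}. Since $\sum_{k} Y_{ik}\mathbbm{1}_{\{k=j\}} = Y_{ij}$ and $\sum_k Y_{ik} = 1$, the inner sum equals $Y_{ij} - P(W)_{ij}$. Hence
\begin{align*}
    \nabla_{w_j} f(W) = -\frac{1}{n}\sum_{i=1}^n \big(Y_{ij} - P(W)_{ij}\big)\, x_i .
\end{align*}

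Finally I stack these rows into a matrix. The $j$-th row of $(Y-P(W))^\top X$ is $\sum_i (Y-P(W))_{ij}\, x_i^\top$, because the rows of $X$ are the $x_i^\top$. This matches the expression above, so $\nabla f(W) = -\frac{1}{n}(Y-P(W))^\top X$.

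There is no real obstacle here. The only point needing care is the index bookkeeping in the final identification, namely that $X$ has rows $x_i^\top$ and that the gradient is taken row-wise, consistent with $W \in \reals^{K\times d}$.
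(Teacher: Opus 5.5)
Your proof is correct and takes essentially the same route as the paper: differentiate row by row with Lemma~\ref{lemma:softmax-grad}, obtain $\nabla_{w_j} f(W) = -\frac{1}{n}\sum_{i} (Y_{ij}-P(W)_{ij})\,x_i$, and stack the rows into $-\frac{1}{n}(Y-P(W))^\top X$. The only cosmetic difference is that the paper uses the one-hot structure first, keeping only the $k=y_i$ term before differentiating, while you keep the full sum over $k$ and collapse it afterwards using $\sum_k Y_{ik}=1$.
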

\begin{proof}
    Observe that
    \begin{align} \label{eq:multiclass-grad-start}
        \nabla_{w_j} f(W) = -\frac{1}{n} \sum_{i=1}^n \frac{1}{P(W)_{i y_i}} \nabla_{w_j} P(W)_{i y_i},
    \end{align}
    so we need to compute $\nabla_{w_j} P(W)_{i y_i}$ first.
    Taking $k = y_i$ in Lemma~\ref{lemma:softmax-grad}, we obtain
    \begin{align*}
        \nabla_{w_j} P(W)_{i y_i} = 
        \big( Y_{ij} - P(W)_{ij} \big) \, P(W)_{i y_i} \; x_i.
    \end{align*}
    Plugging this back in Eq.~\eqref{eq:multiclass-grad-start}, we get
    \begin{align*}
        \nabla_{w_j} f(W) = 
        -\frac{1}{n} \sum_{i=1}^n \big( Y_{ij} - P(W)_{ij} \big) \; x_i.
    \end{align*}
    The result follows by stacking these identities for $j \in [K]$.
\end{proof}

\begin{lemma} \label{lemma:softmax-jacobian-bound}
    Let $\Delta^{K-1}$ denote the standard simplex.
    Then, for all $p \in \Delta^{K-1}$, 
    \begin{align} \label{eq:softmax-jacobian-bound}
        0 \preceq
        \diag(p) - p p^\top \preceq
        2 \max_{i \in [K]} p_i (1 - p_i) I.
    \end{align}
\end{lemma}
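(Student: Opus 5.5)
The plan is to treat the matrix $A \coloneqq \diag(p) - pp^\top$ as a quadratic form and bound $v^\top A v$ for arbitrary $v \in \reals^K$. For the lower bound, I will use the standard variance identity. Viewing $p$ as a probability distribution on $[K]$, we have
\begin{align*}
    v^\top A v = \sum_k p_k v_k^2 - \Bigl(\sum_k p_k v_k\Bigr)^2 = \sum_k p_k (v_k - \bar v)^2 \geq 0,
    \qquad \bar v \coloneqq \sum_k p_k v_k .
\end{align*}
This is just the variance of $v$ under $p$, so $0 \preceq A$ follows immediately. Equivalently, it is Jensen's inequality or Cauchy--Schwarz applied to $\bigl(\sum_k p_k v_k\bigr)^2$.

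For the upper bound, I will use the Gershgorin circle theorem, since $A$ is symmetric. Its diagonal entries are $A_{kk} = p_k - p_k^2 = p_k(1-p_k)$, and its off-diagonal entries are $A_{kj} = -p_k p_j$. The off-diagonal absolute row sum is therefore
\begin{align*}
    \sum_{j \neq k} p_k p_j = p_k(1-p_k),
\end{align*}
using $\sum_j p_j = 1$. Every Gershgorin disc is then contained in $[\,0,\, 2p_k(1-p_k)\,]$. Since $A$ is symmetric, its eigenvalues are real, and each one lies in the union of these intervals. Hence $\lambda_{\max}(A) \leq 2\max_k p_k(1-p_k)$, which gives the claimed inequality $A \preceq 2\max_k p_k(1-p_k)\, I$.

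I can also give a direct version of the same bound that avoids eigenvalues. Write
\begin{align*}
    v^\top A v = \tfrac12 \sum_{k,j} p_k p_j (v_k - v_j)^2 \leq \sum_{k,j} p_k p_j (v_k^2 + v_j^2) = 2\sum_k p_k(1-p_k) v_k^2 .
\end{align*}
The first equality is the symmetrized form of the variance identity. The inequality uses $(a-b)^2 \leq 2(a^2+b^2)$, and the diagonal terms $k = j$ vanish on the left, so the double sum on the right can be restricted to $j \neq k$. Summing over $j \neq k$ then gives $\sum_{j \neq k} p_j = 1 - p_k$, which yields the last expression. Finally, bounding each coefficient $p_k(1-p_k)$ by its maximum over $k$ gives $v^\top A v \leq 2\max_k p_k(1-p_k)\,\|v\|^2$, which is the same upper bound.

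There is no real obstacle in this argument. The only step needing care is the bookkeeping identity $\sum_{j\neq k} p_j = 1 - p_k$, which is what makes the factor $2$ appear naturally.
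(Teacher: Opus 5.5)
Your proposal is correct and essentially matches the paper, which gets both inequalities at once from Gershgorin's theorem (each disc meets the real line in $[0, 2p_i(1-p_i)]$); your variance identity for the lower bound and your direct quadratic-form argument are valid alternatives, and the latter even yields the sharper $\diag(p) - pp^\top \preceq 2\,\diag\bigl(p_1(1-p_1), \dots, p_K(1-p_K)\bigr)$. One small slip: in that direct display the middle double sum must already be restricted to $j \neq k$, since as literally written $\sum_{k,j} p_k p_j (v_k^2 + v_j^2) = 2\sum_k p_k v_k^2$.
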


\begin{proof}
    Observe that the diagonal elements of $\diag(p)-pp^\top$ are $p_i(1-p_i)$, while for each row $i$, the sum of the absolute values of the off-diagonal elements is $ \sum_{j \neq i} p_i p_j = p_i(1-p_i) $.
    By the Gershgorin circle theorem, each eigenvalue of
    $\diag(p)-pp^\top$ lies in at least one interval of the form $ [0, 2 p_i (1 - p_i) ]$.
\end{proof}

We next establish that the Hessian of $f$ is upper bounded in terms of the loss. 
A proof of this bound also appears in~\citep[Lem.~C.3]{emmanouilidis2026sgd}; we include the proof for completeness.

\begin{lemma} \label{lemma:cross-entropy-hessian-self-bound}
    Under~\ref{ass:multiclass1}, the loss function $f$ in Eq.~\eqref{eq:multiclass-loss} satisfies
    \begin{align*}
        \nabla^2 f(W) \preceq 2 f(W) I
    \end{align*}
\end{lemma}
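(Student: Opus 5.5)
The plan is to compute the Hessian of $f$ explicitly, block by block, and then bound it using Lemma~\ref{lemma:softmax-jacobian-bound}. By Lemma~\ref{lemma:grad-f}, $\nabla_{w_j} f(W) = \frac{1}{n}\sum_{i=1}^n (P(W)_{ij} - Y_{ij})\, x_i$. Differentiating again with Lemma~\ref{lemma:softmax-grad} gives
\begin{align*}
    \nabla^2 f(W) = \frac{1}{n}\sum_{i=1}^n \bigl(\diag(p_i) - p_i p_i^\top\bigr) \otimes x_i x_i^\top,
\end{align*}
where $p_i = P(W)_i$ is the $i$-th row of probabilities. Equivalently, for a direction $V \in \reals^{K\times d}$, the Hessian quadratic form is
\begin{align*}
    \langle V, \nabla^2 f(W)[V]\rangle_\mathrm{F} = \frac{1}{n}\sum_{i=1}^n u_i^\top \bigl(\diag(p_i) - p_ip_i^\top\bigr) u_i,
    \qquad u_i \coloneqq V x_i \in \reals^K .
\end{align*}

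Next, I bound each summand. By Lemma~\ref{lemma:softmax-jacobian-bound}, it is at most $2\max_k p_{ik}(1-p_{ik})\,\|u_i\|^2$. Under~\ref{ass:multiclass1}, $\|u_i\|^2 = \|V x_i\|^2 \le \|V\|_\mathrm{F}^2$. It then suffices to prove the scalar inequality
\begin{align*}
    \max_k p_{ik}(1-p_{ik}) \le -\log p_{iy_i} = \ell_i,
\end{align*}
since averaging over $i$ gives $\langle V, \nabla^2 f(W)[V]\rangle_\mathrm{F} \le \frac{2}{n}\sum_i \ell_i \|V\|_\mathrm{F}^2 = 2 f(W)\|V\|_\mathrm{F}^2$.

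The scalar inequality is the main (though mild) obstacle, since the maximizing $k$ need not be $y_i$. I handle it in two cases.
\begin{itemize}
    \item[] \emph{Case $k = y_i$.} Then $p_{iy_i}(1-p_{iy_i}) \le 1 - p_{iy_i} \le -\log p_{iy_i}$, using $1 - x \le -\log x$.
    \item[] \emph{Case $k \neq y_i$.} Then $p_{ik}(1-p_{ik}) \le p_{ik} \le 1 - p_{iy_i} \le -\log p_{iy_i}$, because $p_{ik}$ and $p_{iy_i}$ sum to at most $1$.
\end{itemize}
In both cases the bound is $\ell_i$, which concludes the argument. (A cleaner bound with constant $1$ might be possible, but the claimed factor $2$ is exactly what the Gershgorin bound delivers.)
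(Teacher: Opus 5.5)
Your proof is correct and follows the paper's argument essentially step for step: the Hessian quadratic form $\frac{1}{n}\sum_i (Vx_i)^\top(\diag(p_i)-p_ip_i^\top)(Vx_i)$, the Gershgorin bound of Lemma~\ref{lemma:softmax-jacobian-bound}, the two-case estimate $\max_k p_{ik}(1-p_{ik}) \le 1-p_{iy_i} \le -\log p_{iy_i}$, and $\|Vx_i\| \le \|V\|_\mathrm{F}$ from~\ref{ass:multiclass1}. Only your closing aside is off, because the factor $2$ cannot be lowered to $1$: take $K=2$, $n=1$, $\|x_1\|=1$ and the direction $V$ with rows $x_1/\sqrt{2}$ and $-x_1/\sqrt{2}$ (so $\|V\|_\mathrm{F}=1$); then the quadratic form equals $2p(1-p)$ with $p = P(W)_{1y_1}$, while $f(W) = -\log p$, and the ratio tends to $2$ as $p \to 1$.
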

\begin{proof}
    In the proof of Lemma~\ref{lemma:grad-f}, for each $j \in [K]$ we obtained
    \begin{align*}
        \nabla_{w_j} f(W) =
        -\frac{1}{n} \sum_{i=1}^n
        \big( Y_{ij} - P(W)_{ij} \big) \, x_i.
    \end{align*}
    Using Lemma~\ref{lemma:softmax-grad}, it follows that, for every $j,k \in [K]$,
    \begin{align*}
        \nabla_{w_k}\nabla_{w_j} f(W) =
        \frac{1}{n}\sum_{i=1}^n P(W)_{ij} \big( \mathbbm{1}_{\{j=k\}} - P(W)_{ik} \big) \, x_i x_i^\top.
    \end{align*}
    Let $\Delta^{K-1}$ be the standard simplex in $\reals^K$ and $C \colon \Delta^{K-1} \to \reals^{K \times K}$ be given by $C(p) = \diag(p) - p p^\top$.
    Writing $p_i = P(W)_{i:} \in \Delta^{K-1}$, for any $U \in \reals^{K \times d}$ we have
    \begin{align} \label{eq:hess-bound}
        \left\langle U, \nabla^2 f(W)[U] \right\rangle_\mathrm{F} =
        \frac{1}{n}\sum_{i=1}^n (U x_i)^\top C(p_i)(U x_i) \leq
        \frac{2}{n}\sum_{i=1}^n
        \max_{k \in [K]} p_{ik} (1 - p_{ik})
        \| U x_i \|^2,
    \end{align}
    where in the last step we used Lemma~\ref{lemma:softmax-jacobian-bound}.
    We next observe that
    \begin{align*}
        \max_{k \in [K]} p_{ik} (1 - p_{ik}) \leq
        1 - p_{iy_i}.
    \end{align*}
    Indeed, for $k=y_i$ this is immediate, while for $k\neq y_i$,
    \begin{align*}
        p_{ik}(1-p_{ik}) \leq p_{ik}
        \leq 1 - p_{iy_i}.
    \end{align*}
    Using $1-z\leq-\log z$ for every $z\in(0,1]$ and~\ref{ass:multiclass1}, Eq.~\eqref{eq:hess-bound} becomes
    \begin{align*}
        \left\langle U, \nabla^2 f(W)[U] \right\rangle_\mathrm{F}
        \leq \frac{2}{n} \sum_{i=1}^n \big( -\log P(W)_{i y_i} \big) \| U x_i \|^2 
        \leq \frac{2}{n}\sum_{i=1}^n \big( -\log P(W)_{i y_i} \big) \| U \|_\mathrm{F}^2 
        = 2 f(W) \| U \|_\mathrm{F}^2.
    \end{align*}
    Since this holds for every $U \in \reals^{K\times d}$, we conclude that
    $ \nabla^2 f(W) \preceq 2 f(W) I$.
\end{proof}

\section{Additional proofs for Section~\ref{section:two-phase}} \label{appendix:two-phase-proofs}

This section contains the auxiliary results used in Section~\ref{section:two-phase}, as well as the proof of Corollary~\ref{cor:multiclass-accelerated-rate}.

\subsection{Loss decay in the stable region}

We first relate the average mistake mass (Eq.~\eqref{eq:avg-mistake-def}) to the loss in the stable region.
\begin{lemma} \label{lemma:mistake-bound-stable-regime}
    Assume that $f(W) < \frac{1}{2\eta}$ and $\eta \geq \frac{n}{2}$.
    Then, $S(\frac{1}{\eta} W) \geq \frac{f(W)}{2K}$.
\end{lemma}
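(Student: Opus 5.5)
The plan is to reuse Lemma~\ref{lemma:mistake-mass}, which controls the mistake mass through the exponentiated loss, and then apply an elementary lower bound on $1-e^{-x}$ for small $x$. First note that $S(\frac{1}{\eta}W)$ is the average mistake mass for the soft perceptron with $\Pi(Z)=P(\eta Z)$, evaluated at $Z=W/\eta$. Hence $P(\eta Z)=P(W)$ and $f(\eta Z)=f(W)$. Lemma~\ref{lemma:mistake-mass} then gives
\begin{align*}
    S\Bigl(\tfrac{1}{\eta}W\Bigr) \geq \frac{1-\exp(-n f(W))}{nK}.
\end{align*}

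The next step is to check that the exponent lies in a range where $1-e^{-x}$ is comparable to $x$. Set $x = n f(W)$. By assumption $f(W) < \frac{1}{2\eta}$ and $\eta \geq \frac{n}{2}$, so
\begin{align*}
    0 \leq x < \frac{n}{2\eta} \leq 1.
\end{align*}
On $[0,1]$ the function $x\mapsto 1-e^{-x}$ is concave and vanishes at $0$. It therefore lies above its chord, which gives $1-e^{-x}\geq (1-e^{-1})x \geq x/2$. This is the same inequality used in Lemma~\ref{lemma:large-f-implies-mistake-mass}.

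Combining the two steps yields
\begin{align*}
    S\Bigl(\tfrac{1}{\eta}W\Bigr) \geq \frac{n f(W)/2}{nK} = \frac{f(W)}{2K},
\end{align*}
as claimed. I expect no real obstacle here. The only point needing care is that the hypotheses $f(W)<1/(2\eta)$ and $\eta\geq n/2$ are used jointly and only to place $nf(W)$ in $[0,1]$. The identification $f(\eta Z)=f(W)$ is immediate from the rescaling $Z=W/\eta$.
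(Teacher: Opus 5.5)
Your proposal is correct and follows essentially the same route as the paper: apply Lemma~\ref{lemma:mistake-mass}, use the two hypotheses to place $nf(W)$ in $[0,1]$, and then use $1-e^{-x}\geq x/2$ on that interval. Your chord/concavity justification of that last inequality is a fine way to verify it.
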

\begin{proof}
    Apply Lemma~\ref{lemma:mistake-mass} to obtain $ S(\frac{1}{\eta} W) \geq \frac{1}{nK} \left( 1 - \exp(-n f(W))\right)$.
    Then, since $f(W) < \frac{1}{2\eta}$ and $\eta \geq \frac{n}{2}$, we have $f(W) < \frac{1}{n}$.
    Using $1 - \exp(-x) \geq \frac{x}{2}$ for all $x \in [0, 1]$, we obtain
    \begin{align*}
        S \left( \frac{1}{\eta} W \right) \geq
        \frac{1}{nK} \left( 1 - \exp(-n f(W))\right) \geq
        \frac{f(W)}{2K},
    \end{align*}
    as claimed.
\end{proof}

The next result shows that, in the stable region, the gradient norm is bounded below by a constant multiple of the loss.
A binary counterpart (under an alternative assumption) is given in \citep[Lem.~C.5]{crawshaw2026tight}.
\begin{lemma} \label{lemma:gradF-lower-bounded-by-F-if-stable}
    Under Assumption~\ref{ass:multiclass2}, if $f(W) < \frac{1}{2 \eta}$ and $\eta \geq \frac{n}{2}$, then $\| \nabla f(W) \|_\mathrm{F} \geq \frac{\gamma}{2} f(W)$.
\end{lemma}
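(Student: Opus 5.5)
The plan is to lower bound $\|\nabla f(W)\|_\mathrm{F}$ by its inner product with the unit-norm separator $W^*$, and then to link that inner product to the loss through the average mistake mass. By Lemma~\ref{lemma:grad-f}, with $Z = W/\eta$ and $\Pi(Z) = P(\eta Z) = P(W)$, we have
\begin{align*}
    -\nabla f(W) = \tfrac{1}{n}\bigl(Y - P(W)\bigr)^\top X = H(Z)^\top X .
\end{align*}
Because $\|W^*\|_\mathrm{F} \leq 1$, Cauchy--Schwarz yields
\begin{align*}
    \|\nabla f(W)\|_\mathrm{F} \geq \langle W^*, -\nabla f(W)\rangle_\mathrm{F} = \langle W^*, H(Z)^\top X\rangle_\mathrm{F}.
\end{align*}
This is exactly the quantity controlled by Lemma~\ref{lemma:update-term-good-direction-progress}. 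That lemma applies to any row-stochastic $\Pi$ under~\ref{ass:multiclass2}, and it gives the lower bound $\gamma K S(Z)$.

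It remains to lower bound $S(Z) = S(\tfrac{1}{\eta}W)$ by the loss. This is precisely Lemma~\ref{lemma:mistake-bound-stable-regime}: under the hypotheses $f(W) < \frac{1}{2\eta}$ and $\eta \geq \frac{n}{2}$, it gives $S(\tfrac1\eta W) \geq \frac{f(W)}{2K}$. Chaining the bounds, we get
\begin{align*}
    \|\nabla f(W)\|_\mathrm{F} \geq \gamma K \cdot \frac{f(W)}{2K} = \frac{\gamma}{2} f(W),
\end{align*}
which is the claim.

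I do not expect a real obstacle. The one point to check is that Lemma~\ref{lemma:update-term-good-direction-progress} is a pointwise statement about $H(Z)$ for arbitrary $Z$, so it holds here without any reference to a trajectory. The other point is that Lemma~\ref{lemma:mistake-bound-stable-regime} uses the rescaling convention $S(Z)$ with $\Pi(Z) = P(\eta Z)$, which matches the choice of $Z$ above. The assumption~\ref{ass:multiclass1} is not needed for this argument, consistent with the statement assuming only~\ref{ass:multiclass2}.
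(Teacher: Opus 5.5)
Your proposal is correct and follows essentially the same route as the paper's proof. The paper also writes $-\nabla f(W) = H(Z)^\top X$ with $Z = W/\eta$, then applies Cauchy--Schwarz against $W^*$, Lemma~\ref{lemma:update-term-good-direction-progress}, and Lemma~\ref{lemma:mistake-bound-stable-regime}, in the same order you do.
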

\begin{proof}
    Recall that $Z = \frac{1}{\eta} W$ and that the soft perceptron dynamics~\eqref{eq:multiclass-eta-system} is a special case of the general batched perceptron dynamics~\eqref{eq:general-system}, with $\Pi(Z) = P(\eta Z)$.
    Hence, Lemma~\ref{lemma:update-term-good-direction-progress} gives
    \begin{align} \label{eq:grad-good-dir-bound}
        \langle W^*, H(Z)^\top X \rangle_\mathrm{F}
        \geq
        \gamma K S \left( \frac{1}{\eta} W \right).
    \end{align}
    By Lemma~\ref{lemma:grad-f},
    \begin{align*}
        -\nabla f(W) =
        \frac{1}{n}\bigl(Y-P(W)\bigr)^\top X =
        H(Z)^\top X,
    \end{align*}
    where we used $W =\eta Z$.
    Therefore, Cauchy--Schwarz on the left-hand side of Eq.~\eqref{eq:grad-good-dir-bound} gives
    \begin{align*}
        \| \nabla f(W) \|_\mathrm{F} \geq
        \langle W^*, -\nabla f(W) \rangle_\mathrm{F} \geq
        \gamma K S \left( \frac{1}{\eta} W \right),
    \end{align*}
    where we also used $\| W^* \|_\mathrm{F} \leq 1$ from Assumption~\ref{ass:multiclass2}.
    
    Then, since $f(W) < \frac{1}{2\eta}$ and $\eta \geq \frac{n}{2}$, conclude with Lemma~\ref{lemma:mistake-bound-stable-regime}, which gives $S(\frac{1}{\eta} W) \geq \frac{f(W)}{2K}$.
\end{proof}

The following lemma shows that the stable region is forward invariant and establishes the descent inequality used in Subsection~\ref{subsection:loss-decay}.
\begin{lemma} \label{lemma:descent-lemma-if-stable}
    Consider a twice continuously differentiable convex $f \colon \reals^{K \times d} \to \reals$ satisfying $\nabla^2 f(W) \preceq 2 f(W) I$ for all $W$.
    Let $\eta > 0$, and define the strict sublevel set $\mathcal S_\eta = \{ W \in \reals^{K \times d} \mid f(W) < \frac{1}{2\eta} \}$.
    If $W \in \mathcal S_\eta$, then 
    \begin{align*}
        f \big(W - \eta \nabla f(W) \big) - f(W) \leq 
        -\frac{\eta}{2} \| \nabla f(W) \|_\mathrm{F}^2.
    \end{align*}
    In particular, this shows that $W \in \mathcal S_\eta$ implies $W^+ \coloneqq W - \eta \nabla f(W) \in \mathcal S_\eta$, so the sublevel set is forward invariant under GD with step size $\eta$.
\end{lemma}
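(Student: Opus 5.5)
We use a second-order Taylor expansion along the segment from $W$ to $W^+ = W - \eta \nabla f(W)$, and control the Hessian along the whole segment, not just at $W$. Let $g = \nabla f(W)$ and $\phi(s) = f(W - s g)$ for $s \in [0,\eta]$. Then $\phi'(0) = -\|g\|_\mathrm{F}^2$ and $\phi''(s) = \langle g, \nabla^2 f(W - sg)[g]\rangle_\mathrm{F} \leq 2 \phi(s) \|g\|_\mathrm{F}^2$ by the assumed self-bounding of the Hessian. If we knew $\phi(s) \leq f(W)$ on $[0,\eta]$, Taylor's theorem would give
\begin{align*}
    f(W^+) - f(W) \leq -\eta \|g\|_\mathrm{F}^2 + \frac{\eta^2}{2} \cdot 2 f(W) \|g\|_\mathrm{F}^2 < -\eta\|g\|_\mathrm{F}^2 + \frac{\eta}{2}\|g\|_\mathrm{F}^2 = -\frac{\eta}{2}\|g\|_\mathrm{F}^2,
\end{align*}
using $f(W) < \frac{1}{2\eta}$. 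Forward invariance would then follow from $f(W^+) \leq f(W) < \frac{1}{2\eta}$.

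The main step is therefore to show $\phi(s) \leq \phi(0)$ on $[0,\eta]$. We would prove this by a continuity (bootstrap) argument. If $g = 0$ there is nothing to prove, so assume $g \neq 0$. Let $s^* = \sup\{ s \in [0,\eta] \mid \phi(u) \leq \phi(0) \text{ for all } u \in [0,s]\}$; this set contains $0$. Suppose for contradiction that $s^* < \eta$. On $[0,s^*]$ we have $\phi \leq \phi(0)$, hence $\phi'' \leq 2\phi(0)\|g\|_\mathrm{F}^2$ there. Taylor's theorem on $[0,s]$ with $s \le s^*$ then gives
\begin{align*}
    \phi(s) \leq \phi(0) - s\|g\|_\mathrm{F}^2\bigl(1 - s\phi(0)\bigr).
\end{align*}
Since $s\phi(0) < \eta \cdot \frac{1}{2\eta} = \frac12$, this yields $\phi(s^*) \leq \phi(0) - \frac{s^*}{2}\|g\|_\mathrm{F}^2$.

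It remains to rule out $s^* < \eta$, splitting on whether $s^* > 0$. If $s^* > 0$, the bound above gives $\phi(s^*) < \phi(0)$ strictly, and continuity of $\phi$ extends the inequality $\phi \leq \phi(0)$ slightly past $s^*$, a contradiction. If $s^* = 0$, we use $\phi'(0) = -\|g\|_\mathrm{F}^2 < 0$, so $\phi < \phi(0)$ on some $(0,\epsilon)$, again a contradiction. Hence $s^* = \eta$, and the Taylor bound above applies on the whole segment.

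Convexity is not strictly needed here; it only guarantees $\nabla^2 f \succeq 0$, which is harmless. Twice continuous differentiability is what justifies the integral form of Taylor's theorem and the continuity of $\phi$. We expect the bootstrap to be the only delicate point, namely making sure the Hessian bound is invoked only at points already known to satisfy $f \leq f(W)$. The remaining steps are routine.
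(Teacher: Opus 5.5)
Your proof is correct and follows essentially the same route as the paper's: a continuity argument along the segment from $W$ to $W^+$ ensures the curvature bound can be used on the whole segment, and the quadratic upper bound then gives the descent inequality. The differences are minor: you bootstrap on the closed set $\{\phi \leq \phi(0)\}$ (hence your separate case $s^* = 0$) rather than on the open set $\mathcal S_\eta$, and you apply the one-sided bound $\phi'' \leq 2\phi\,\|g\|_\mathrm{F}^2$ through Taylor's formula instead of a $1/\eta$-Lipschitz gradient on $\mathcal S_\eta$, which is why you can drop convexity while the paper uses it.
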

\begin{proof}
    Since $f$ is convex, the sublevel set $\mathcal S_\eta$ is a convex set.
    Furthermore, convexity gives $0 \preceq \nabla^2 f(V)$ for all $V$ and since $\nabla^2 f(V) \preceq 2 f(V) I$ for all $V$, it follows that $\nabla f$ is $1/\eta$-Lipschitz on $\mathcal S_\eta$.

    Now, fix $W \in \mathcal S_{\eta}$, and consider the straight curve $C(t) = W - t \eta \nabla f(W)$ with $C(0) = W \in \mathcal S_\eta$ and $C(1) = W^+$.
    Let $I$ be the connected component of $C^{-1}(\mathcal S_\eta)$ containing $0$.
    Since $\mathcal S_\eta$ is open and $C$ is continuous, every connected component of $C^{-1}(\mathcal S_\eta)$ is an open interval, so $I$ is also an open interval.
    We want to show that $I$ contains $[0, 1]$, so that $W^+ \in \mathcal S_{\eta}$.
    Let $\bar t = \sup I$ and assume for contradiction that $\bar t \leq 1$.
    Since $C(t) \in \mathcal S_\eta$ for all $t \in [0, \bar t)$ and $\nabla f$ is $\frac{1}{\eta}$-Lipschitz on $\mathcal S_{\eta}$, for $t \in [0, \bar t)$ we have that
    \begin{align} \label{eq:descent-in-sublevel-set}
        f(C(t)) \leq
        f(W) - t \eta \| \nabla f(W) \|_\mathrm{F}^2 + \frac{t^2 \eta}{2} \| \nabla f(W) \|_\mathrm{F}^2 =
        f(W) - t \eta \left(1 - \frac{t}{2}\right) \| \nabla f(W) \|_\mathrm{F}^2.
    \end{align}
    Since $0 \leq t < \bar t \leq 1$, this shows that $f(C(t)) \leq f(W) < \frac{1}{2 \eta}$ for all $t \in [0, \bar t)$.
    However, by continuity, it also holds that $f(C(\bar t)) \leq f(W) < \frac{1}{2 \eta}$, which implies that $C(\bar t) \in \mathcal S_\eta$.
    Thus, $\bar t \in I$, and since $I$ is open, it follows that $\sup I > \bar t$, a contradiction.
    Therefore, it must be that $\bar t > 1$, which shows that $W^+ = C(1) \in \mathcal S_\eta$.
    The conclusion then follows from the descent inequality from Eq.~\eqref{eq:descent-in-sublevel-set} at $t=1$.
\end{proof}

\subsection{Proof of Corollary~\ref{cor:multiclass-accelerated-rate}} \label{appendix:corollary-proof}

\begin{proof}
    First, observe that since $T \geq \frac{16 n}{\gamma^2}$, we have $\eta = \frac{\gamma^2 T}{32} \geq \frac{n}{2}$.
    Thus, from the first part of Theorem~\ref{thm:multiclass-two-phase}, we have 
    \begin{align} \label{eq:tau-bound-final}
        \tau \leq 
        \frac{8 \left( \eta + \log\left( 4e(K-1)\eta \right) \right)}{\gamma^2}.
    \end{align}
    We want to show that the right-hand side is less than $\frac{T}{2}$.
    Observe that the second assumption on $T$ implies $\eta = \frac{\gamma^2 T}{32} \geq 2 \log(4e(K-1))$, so
    \begin{align*}
        \log(4e(K-1)\eta) =
        \log(4e(K-1)) + \log(\eta) \leq
        \frac{\eta}{2} + \log(\eta) \leq
        \frac{\eta}{2} + \frac{\eta}{2} = 
        \eta,
    \end{align*}
    where in the last step we used $\log(x) \leq \frac{x}{2}$.
    Therefore, substituting $\eta = \frac{\gamma^2 T}{32}$ in Eq.~\eqref{eq:tau-bound-final} gives
    \begin{align*}
        \tau \leq
        \frac{16 \eta}{\gamma^2} = 
        \frac{T}{2}.
    \end{align*}
    This establishes the first claim.

    We now show the $\mathcal O(1/T^2)$ claim.
    From part (ii) of Theorem~\ref{thm:multiclass-two-phase} applied at $T > \tau$, we have
    \begin{align*}
        f(\WWeta{T}) \leq
        \frac{1}{(T-\tau) \frac{\eta}{8} \gamma^2 + \frac{1}{f(\WWeta{\tau})}} \leq
        \frac{8}{(T-\tau) \eta \gamma^2 }.
    \end{align*}
    Since we showed $\tau \leq \frac{T}{2}$ and using again $\eta = \frac{\gamma^2 T}{32}$, we obtain
    \begin{align*}
        f(\WWeta{T}) \leq
        \frac{16}{T \eta \gamma^2} =
        \frac{512}{\gamma^4 T^2},
    \end{align*}
    as claimed.
\end{proof}

\section{Additional results for Section~\ref{section:independent-step-size}} \label{appendix:shadowing}

\begin{proof}[Proof of Lemma~\ref{lemma:eta-infinity-limit}]
    It suffices to show that $P(\eta Z) \to Q(Z)$ as $\eta \to \infty$.
    For each $i \in [n]$, define the score $s_i \coloneqq Z x_i \in \reals^K$.
    Since $P(\eta Z)$ is a row-softmax matrix (Eq.~\eqref{eq:multiclass-loss}),
    \begin{align} \label{eq:P_eta_Z}
        P(\eta Z)_{ik} = 
        \frac{\exp(\eta \langle z_k, x_i\rangle ) }{\sum_{j=1}^K \exp(\eta \langle z_j, x_i\rangle ) } =
        \frac{\exp(\eta s_{ik}) }{\sum_{j=1}^K \exp(\eta s_{ij} ) } = 
        \frac{1}{\sum_{j=1}^K \exp(\eta (s_{ij} - s_{ik})) }.
    \end{align}
    Observe that
    \begin{align*}
        \lim_{\eta \to \infty} \exp(\eta (s_{ij} - s_{ik}) ) =
        \left\{
        \begin{array}{ll}
              0 & s_{ij} - s_{ik} < 0,\\ 
              1 & s_{ij} - s_{ik} = 0, \\ 
              \infty & s_{ij} - s_{ik} > 0.
        \end{array} 
        \right. 
    \end{align*}
    Therefore, if for fixed $i$ and $k$, there exists an index $j$ such that $s_{ij} - s_{ik} > 0$, the denominator in Eq.~\eqref{eq:P_eta_Z} diverges to $\infty$ and thus
    $\lim_{\eta \to \infty} P(\eta Z)_{ik} = 0$.
    That is, if the score $s_{ik}$ is not maximal among the elements of $s_i$, then the corresponding limit is probability $0$.
    Otherwise, $s_{ik}$ must be the maximal score, and each index $j \in \{1, \dots, K\}$ with equal score $s_{ij} = s_{ik}$ contributes $1$ to the sum in the denominator.
    Therefore, taking the limit $\eta \to \infty$ in Eq.~\eqref{eq:P_eta_Z} we obtain the inverse of the count of maximal scores.
    Combining these two cases, we obtain indeed that $\lim_{\eta \to \infty} P(\eta Z) = Q(Z)$.
\end{proof}

\begin{lemma} \label{lemma:softmax-lipschitz}
    The softmax map is $\frac{1}{2}$-Lipschitz with respect to the Euclidean norm.
    That is,
    \begin{align*}
        \forall v, \tilde v \in \reals^K, \qquad
        \| \softmax(v) - \softmax(\tilde v) \| \leq
        \frac{1}{2}\|v - \tilde v\|.
    \end{align*}
\end{lemma}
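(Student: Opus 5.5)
We need to prove that softmax is $\tfrac12$-Lipschitz in the Euclidean norm. The plan is the mean value theorem applied to the Jacobian. Set $\sigma = \softmax$. Its Jacobian at $v$ is
\begin{align*}
    J(v) = \diag(p) - p p^\top,
    \qquad p = \sigma(v).
\end{align*}
This follows from the same computation as in Lemma~\ref{lemma:softmax-grad}, with $\langle w_k, x_i\rangle$ replaced by $v_k$. Write $v_t = \tilde v + t(v - \tilde v)$ for $t \in [0,1]$. Then
\begin{align*}
    \sigma(v) - \sigma(\tilde v) = \int_0^1 J(v_t)\,(v - \tilde v)\,\mathrm{d}t,
\end{align*}
so $\|\sigma(v) - \sigma(\tilde v)\| \leq \sup_{p \in \Delta^{K-1}} \|\diag(p) - pp^\top\|_{\mathrm{op}} \, \|v - \tilde v\|$. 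It therefore suffices to show that $\lambda_{\max}(\diag(p) - pp^\top) \leq \tfrac12$ for every probability vector $p$. Since this matrix is symmetric positive semidefinite (Lemma~\ref{lemma:softmax-jacobian-bound}), its operator norm equals its largest eigenvalue.

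The key step is the eigenvalue bound, which is the only real obstacle. The Gershgorin bound of Lemma~\ref{lemma:softmax-jacobian-bound} gives only $2\max_i p_i(1-p_i) \leq \tfrac12$. This actually suffices, because $p_i(1-p_i) \leq \tfrac14$ for every $p_i \in [0,1]$. So $\diag(p) - pp^\top \preceq \tfrac12 I$ directly.

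As a sanity check, consider $K=2$ with $p = (\tfrac12, \tfrac12)$. The eigenvalues are $0$ and $p_1 p_2 \cdot 2 = \tfrac12$, so the constant $\tfrac12$ is attained in the limit and cannot be improved.

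If a self-contained argument is preferred over citing Gershgorin, use the quadratic form. For any $u$, we have $u^\top(\diag(p) - pp^\top)u = \mathrm{Var}_p(u) \leq \sum_k p_k (u_k - c)^2$ for every constant $c$. Choosing $c = (\max_k u_k + \min_k u_k)/2$ gives $\mathrm{Var}_p(u) \leq \tfrac14(\max_k u_k - \min_k u_k)^2$. Then $(\max_k u_k - \min_k u_k)^2 \leq 2\|u\|^2$, which yields the bound $\tfrac12\|u\|^2$.

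Combining the eigenvalue bound with the integral representation concludes the proof. Smoothness of $\sigma$ justifies the fundamental theorem of calculus along the segment.
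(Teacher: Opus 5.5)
Your proof is correct and follows the paper's route: bound the Lipschitz constant by the supremum over the simplex of the Jacobian operator norm $\|\diag(p)-pp^\top\|$, then apply the Gershgorin bound of Lemma~\ref{lemma:softmax-jacobian-bound} together with $p_i(1-p_i)\le \frac14$. Two additions go slightly beyond the paper. Your integral representation makes the mean-value step more explicit than the paper does, since the paper simply asserts $\mathrm{Lip}(\softmax)=\sup_v\|J(v)\|$. Your variance argument, $\mathrm{Var}_p(u)\le\frac14(\max_k u_k-\min_k u_k)^2\le\frac12\|u\|^2$, is a valid self-contained replacement for Gershgorin.
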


\begin{proof}
    Let $P(v) = \softmax(v) \in \reals^K$ be the vector containing the softmax probabilities.
    The gradient of each component is $\nabla_{v_j} P(v)_k = \big(\mathbbm{1}_{\{k=j\}}-P(v)_j\big)P(v)_k$, and so
    \begin{align*}
        \mathrm{Lip}(\softmax)
        = \sup_{v \in \reals^K} \| \diag(P(v))-P(v)P(v)^\top \| 
        \leq \sup_{p \in \Delta^{K-1}} \| \diag(p)-pp^\top \|.
    \end{align*}
    By Lemma~\ref{lemma:softmax-jacobian-bound}, we have $\| \diag(p) - p p^\top \| \leq 2 \max_{i \in [K]} p_i (1-p_i)$.
    However, since $p_i (1 - p_i) \leq \frac{1}{4}$ for $p_i \in [0,1]$, we conclude that $\mathrm{Lip}(\softmax) \leq \frac{1}{2} $.
\end{proof}

\begin{lemma} \label{lemma:eta-condition}
    Let $T, \delta, \eta>0$, and set $r \coloneqq \frac{T}{\delta}$.
    Assume that either $r \leq 1$, or that $r > 1$ and $\eta \geq 2 r \log \!\bigl( 2r \log(2r) \bigr)$.
    Then, it holds that $ -\frac{\eta\delta}{2} + T \log \left(1 + \frac{\eta}{2} \right) < 0$.
\end{lemma}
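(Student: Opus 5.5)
The goal is to show $g(\eta) \coloneqq T\log(1+\eta/2) - \eta\delta/2 < 0$, which is equivalent to $\log(1+\eta/2) < \eta/(2r)$ with $r = T/\delta$. I will treat the two cases of the hypothesis separately.

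\emph{Case $r \leq 1$.} Here $T \leq \delta$. The elementary inequality $\log(1+x) < x$ for $x > 0$, applied with $x = \eta/2$, gives $T\log(1+\eta/2) < T\eta/2 \leq \delta\eta/2$. This is exactly the claim, with no condition on $\eta$.

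\emph{Case $r > 1$.} Substitute $u = \eta/(2r)$, so the target becomes $\log(1 + r u) < u$. The hypothesis reads $u \geq u_0 \coloneqq \log(2r\log(2r))$. Note that $2r > 2 > e^{1/2}$, so $\log(2r) > 0$; moreover $2r\log(2r) > 1$, so $u_0 > 0$. The argument then has two steps.
\begin{enumerate}
\item \emph{Monotonicity.} The function $\phi(u) = u - \log(1+ru)$ has derivative $\phi'(u) = 1 - r/(1+ru) = (1 + ru - r)/(1+ru)$. This is positive once $u > 1 - 1/r$. Since $1 - 1/r < 1$, it suffices to check that $u_0 \geq 1$, i.e.\ $2r\log(2r) \geq e$. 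This holds for $r$ moderately large but fails near $r = 1$: at $r = 1$, $2\log 2 \approx 1.386 < e$. So I would instead show $\phi$ is increasing on $[u_0, \infty)$ by directly checking $u_0 > 1 - 1/r$. At $r \to 1$ we have $u_0 \to \log(2\log 2) \approx 0.33 > 0 = 1 - 1/r$, and the inequality looks fine in general since $\log(2r\log 2r)$ grows faster than $1 - 1/r$ near $1$. I expect this comparison to need a small calculus argument. If it turns out delicate, I would fall back on the concavity of $\log$: bound $\log(1+ru) \leq \log(1+ru_0) + r(u-u_0)/(1+ru_0)$, and use $r/(1+ru_0) \leq 1$, which holds iff $r - 1 \leq r u_0$.
\item \emph{Endpoint check.} It then suffices to verify $\phi(u_0) > 0$, i.e.\ $1 + r u_0 < e^{u_0} = 2r\log(2r)$. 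Writing $L = \log(2r)$, this is $1 + r\log(2rL) < 2rL$, i.e.\ $1 + r\log 2 + r\log r + r\log L < 2r\log 2 + 2r\log r$, which is $1 + r\log L < r\log 2 + r\log r = rL$. Dividing by $r$, it is enough that $1/r + \log L < L$. Since $\log L \leq L - 1$, this reduces to $1/r < 1$, which holds strictly because $r > 1$.
\end{enumerate}

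The main obstacle is the monotonicity step, which ensures the inequality persists for all $u \geq u_0$ and not just at $u_0$. The endpoint computation is clean. For monotonicity the needed condition is $r u_0 \geq r - 1$, i.e.\ $\log(2r\log 2r) \geq 1 - 1/r$. I would prove this by noting that the left side is at least $\log(2\log 2) > 0.3$ for $r \geq 1$. The left side is also increasing in $r$ with derivative $1/r + 1/(r\log 2r)$, which dominates the right side's derivative $1/r^2$. Hence the difference starts positive at $r = 1$ and is increasing, so it stays positive for all $r > 1$.
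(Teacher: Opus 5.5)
Your proof is correct and follows essentially the same route as the paper. The case $r \le 1$ uses $\log(1+x) < x$. For $r > 1$, the paper proves $e^{u_0} > 1 + r u_0$ at $u_0 = \log(2r\log(2r))$ via $\log y \le y - 1$ with $y = \log(2r)$, and then adds a monotonicity argument for $u \ge u_0$; you do the same. The only difference is in the monotonicity step. The paper works with $e^z - 1 - rz$, whose derivative is positive because $e^{u_0} = 2r\log(2r) > r$. Your comparison of derivatives in $r$ to get $u_0 \ge 1 - 1/r$ is valid, but it can be shortened to $u_0 = \log r + \log(2\log(2r)) > \log r \ge 1 - 1/r$.
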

\begin{proof} [Proof of Lemma~\ref{lemma:eta-condition}]
    Suppose first that $r \leq 1$.
    Since $ \log(1+u) < u$ for every $u>0$, we obtain
    \begin{align*}
        \log\left( 1+\frac{\eta}{2} \right) <
        \frac{\eta}{2} \leq
        \frac{\eta}{2r}.
    \end{align*}
    Using $T=r\delta$, it follows that
    \begin{align*}
        T\log\left( 1 + \frac{\eta}{2} \right) <
        r \delta \frac{\eta}{2r} =
        \frac{\eta\delta}{2}.
    \end{align*}
    This proves the claim when $r\leq 1$.
    Suppose now that $r>1$, and define
    \begin{align*}
        x\coloneqq\frac{\eta}{2r},
        \qquad L\coloneqq\log\!\bigl(2r\log(2r)\bigr),
        \qquad
        q(z) \coloneqq e^z-1-rz.
    \end{align*}
    With this notation, observe that the inequality we need to prove is
    \begin{align*}
        \log(1+ rx) =
        \log\left( 1 + \frac{\eta}{2} \right) <
        \frac{\eta \delta}{2 T} = 
        \frac{\eta}{2 r} = x,
    \end{align*}
    or exponentiating both sides, $1 + rx < e^x$, which is equivalent to $q(x) > 0$.
    We do this by first showing that $q(L) > 0$, then showing that $x \geq L$ and that $q$ is increasing on $[L, \infty)$.
    
    To see that $q(L)>0$, observe that
    \begin{align*}
        q(L) =
        e^L - 1 - r L =
        2r \log(2r) - 1 - r L,
    \end{align*}
    Since $\log(u)\leq u-1$ for $u > 0$, taking $u=\log(2r)$, gives $\log\log(2r)\leq \log(2r)-1 $.
    This implies that
    \begin{align*}
        2 \log(2r) \geq
        \log(2r) + 1 + \log\log(2r) = 
        1 + \log(2r\log(2r)) = 
        1 + L
    \end{align*}
    Since $r>1$, we have
    \begin{align*}
        e^L = 
        2r\log(2r) \geq
        r(1+L) >
        1 + rL,
    \end{align*}
    which shows that $q(L)>0$.
    Additionally, for every $z \geq L$,
    \begin{align*}
        q'(z) =
        e^z-r \geq
        e^L-r =
        r \bigl( 2 \log(2r) - 1 \bigr) > 0,
    \end{align*}
    where the last inequality follows from $r > 1$.
    Thus $q$ is strictly increasing on $[L,\infty)$.
    The assumption on $\eta$ implies $x=\frac{\eta}{2r}\geq L$, and so it follows that $q(x)\geq q(L)>0$.
    As argued before, this implies the claimed inequality.
\end{proof}

\section{A counterexample to uniform transition time bounds in $d=3$} \label{appendix:counterexample}

For the counterexample, it is convenient to work with the binary logistic formulation, since it involves a single parameter vector.
Let $\{(x_i, y_i)\}_{i=1}^n$ be a linearly separable binary classification dataset, with $x_i \in \reals^d$ and $y_i \in \{-1, 1\}$, meaning that there exist $w^* \in \reals^d$ and $\gamma > 0$ such that
\begin{align*}
    \| w^* \| \leq 1,
    \qquad
    y_i \langle w^*, x_i\rangle \geq \gamma
    \quad \text{ for all } i \in [n].
\end{align*}
For $w \in \reals^d$, we consider the \emph{binary logistic loss}
\begin{align}
    f_{\mathrm{bin}}(w)
    \coloneqq
    \frac{1}{n}\sum_{i=1}^n \ell\bigl(y_i \langle w,x_i\rangle\bigr),
    \qquad
    \ell(z) \coloneqq \log(1+\exp(-z)).
\end{align}

\noindent We also consider the corresponding \emph{binary perceptron dynamics}, given by
\begin{align} \label{eq:binary-perceptron}
    z_{t+1}^\infty =
    z_t^\infty + \frac{1}{n}\sum_{i=1}^n \varphi \left( \langle z_t^\infty, y_i x_i\rangle \right) y_i x_i,
    \qquad
    z_0^\infty=0,
\end{align}
where $\varphi(s) = 1$ for $s < 0$, $\varphi(0) = 1/2$, and $\varphi(s) = 0$ for $s > 0$, along with its \emph{binary score gap}
\begin{align} \label{eq:binary-gap}
    \delta_\mathrm{bin}
    \coloneqq
    \min\left\{
        \left| y_i \langle z_t^\infty, x_i \rangle \right| \mid
        0 \leq t \leq T,\ i \in [n], \
        y_i \langle z_t^\infty, x_i \rangle \neq 0
    \right\}.
\end{align}

When $K=2$, the multiclass formulation is equivalent to binary logistic regression under the identification $w=w_1-w_2$, with $f(W)=f_{\mathrm{bin}}(w)$ and $\eta_{\mathrm{bin}}=2\eta$ (see Appendix~\ref{appendix:K2-binary-equivalence}), so the stable-region condition $f(W)<\frac{1}{2\eta}$ becomes $f_{\mathrm{bin}}(w)<\frac{1}{\eta_{\mathrm{bin}}}$.
Similarly, the rows of the $K=2$ hard perceptron iterate~\eqref{eq:multiclass-infinity-system} at time $t$ are $z_t^\infty$ and $-z_t^\infty$, and the multiclass gap~\eqref{eq:delta-def} satisfies $\delta = 2\delta_{\mathrm{bin}}$. 
(In particular, the two perceptron dynamics have the same termination time).
Writing $\eta$ for $\eta_{\mathrm{bin}}$ below for notational convenience, to prove Theorem~\ref{thm:counterexample} it suffices to show the following result for binary logistic regression.

\begin{theorem} \label{thm:binary-counterexample}
    For every $N \geq 1$, there exists $\eta_N > 0$ such that, for every $\eta \geq \eta_N$, there exists a linearly separable dataset $X_\eta$ of three points in $\reals^3$, each of norm at most $1$ and with margin at least $1 / (2\sqrt{13})$, for which GD initialized at zero with step size $\eta$ requires more than $N$ iterations to enter the stable region.
    
    Moreover, let $\delta_\eta \coloneqq \delta_\mathrm{bin}(X_\eta)$ be the binary gap~\eqref{eq:binary-gap} of $X_\eta$.
    On the family of datasets $X_\eta$:
    \begin{enumerate}[label=(C\arabic*)]
        \item \label{thm:hp-transition-1} The hard binary perceptron dynamics~\eqref{eq:binary-perceptron} terminates after $T=1$ iteration if $\eta$ is large enough.
        \item \label{thm:delta-dep-transition} For every fixed $c \in (0, 1)$ and large enough $\eta$, GD initialized at zero with step size $c \, \frac{1}{\delta_\eta} \log\left( \frac{1}{\delta_\eta} \right)$ does not enter the stable region within $T=1$ iteration.
        \item \label{thm:delta-dep-eps-loss} For every $\varepsilon \in (0, \frac{1}{6})$ and large enough $\eta$, GD initialized at zero with step size smaller than $\frac{1}{\delta_\eta} \log\left( \frac{1}{6 \varepsilon }\right)$ does not achieve loss below $\varepsilon$ at iteration $T=1$.
    \end{enumerate}
     
\end{theorem}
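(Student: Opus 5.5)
The plan is an explicit, $\eta$-dependent three-point dataset in $\reals^3$ built around one ``fragile'' point. Concretely, I would take two points $x_1, x_2$ with labels $y_1 = y_2 = +1$, of norm at most $1$ and roughly orthogonal, so that they have a large margin in the first two coordinates. The third point $x_3$ (say $y_3 = +1$ as well) is chosen so that the first hard perceptron step $z^\infty_1 = \frac1n\sum_i y_i x_i$ classifies all three points correctly, but classifies $x_3$ with a tiny positive score $y_3\langle z_1^\infty, x_3\rangle = \delta_\eta$. I would keep a positive margin $\geq 1/(2\sqrt{13})$ for the dataset by giving $x_3$ a component along the third coordinate $e_3$, which $z^\infty_1$ does not see. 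A separator using $e_3$ (e.g.\ $w^* \propto (1,1,\beta)$ for a fixed $\beta$) then certifies a margin bounded away from zero uniformly in $\eta$. The free parameter is the small in-plane tilt of $x_3$, which makes $\delta_\eta \to 0$; I would tune it as $\delta_\eta \asymp \log(\eta)/\eta$ times a constant depending on $N$. Checking \ref{thm:hp-transition-1} is then a direct computation: at $z=0$ all scores are $0$, $\varphi(0)=\tfrac12$ for every point, $z^\infty_1$ is explicit, and it strictly separates all three points.

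For the transition-time lower bound, I would track GD in the binary form. At $w_0 = 0$ we get $w_1 = \eta z^\infty_1$, which has score $\eta\delta_\eta$ on $x_3$ and scores of order $\eta$ on $x_1,x_2$. Hence
\begin{align*}
    f_{\mathrm{bin}}(w_t) \geq \tfrac13 \log\bigl(1+e^{-m_t}\bigr),
    \qquad m_t \coloneqq y_3\langle w_t, x_3\rangle .
\end{align*}
For $t\geq1$ the gradient contributions of $x_1,x_2$ are $O(e^{-c\eta})$, so the dynamics is essentially driven by $x_3$ alone. This gives a recurrence of the form
\begin{align*}
    m_{t+1} \leq m_t + \tfrac{\eta}{3}\|x_3\|^2 e^{-m_t} + O\bigl(\eta e^{-c\eta}\bigr).
\end{align*}
The stable region requires roughly $e^{-m_t} < 3/\eta$, i.e.\ $m_t \gtrsim \log(\eta/3)$. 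So I would start from $m_1 = \eta\delta_\eta = \log\eta - A_N$ and show the recurrence needs more than $N$ steps to climb by $A_N$. The main obstacle is exactly here: each increment is about $\|x_3\|^2 e^{A}/3$, so a large deficit $A$ also produces large jumps. To get around this I would make the effective step small relative to the deficit, choosing $A_N$ as a fixed constant and arranging that $x_3$'s relevant component (its projection onto the span reachable by the updates, i.e.\ $x_3$ itself for the $x_3$-driven part) has squared norm $c_N$ small enough that $N c_N e^{A_N}/3 < A_N - \log 3$. The $e_3$ direction would keep the margin bounded below independently of $c_N$, and $\eta_N$ would be chosen so the $O(\eta e^{-c\eta})$ cross terms are negligible. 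If small $\|x_3\|$ conflicts with the margin constant, the fallback is to exploit the $x_1$/$x_2$ cross terms, which at first order move $w$ away from $x_3$'s favourable direction.

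Items \ref{thm:delta-dep-transition} and \ref{thm:delta-dep-eps-loss} only involve one GD step. For a generic step size $\eta'$, we again have $w_1 = \eta' z_1^\infty$, so
\begin{align*}
    f_{\mathrm{bin}}(w_1) \geq \tfrac13\log\bigl(1+e^{-\eta'\delta_\eta}\bigr) \geq \tfrac16 e^{-\eta'\delta_\eta}.
\end{align*}
For \ref{thm:delta-dep-eps-loss}, $\eta' < \frac{1}{\delta_\eta}\log\frac{1}{6\varepsilon}$ gives loss $>\varepsilon$. For \ref{thm:delta-dep-transition}, take $\eta' = \frac{c}{\delta_\eta}\log\frac1{\delta_\eta}$. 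Then the loss is at least $\delta_\eta^{c}/6$, while the stable threshold is $1/\eta' = \delta_\eta/(c\log(1/\delta_\eta))$. Since $c<1$ and $\delta_\eta\to0$, we get $\delta_\eta^c/6 \geq 1/\eta'$ for $\eta$ large, so GD is not stable after one step. Translating back via $\delta = 2\delta_{\mathrm{bin}}$ and $\eta_{\mathrm{bin}} = 2\eta$ then gives Theorem~\ref{thm:counterexample}.
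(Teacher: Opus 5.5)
Your one-step arguments for (C2) and (C3) are the same as the paper's and are fine. The main claim, however, is more than $N$ unstable iterations for \emph{every} $N$, and a single fragile point cannot deliver it. The obstacle you flagged blocks the construction outright.

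The margin requirement forces $\tfrac{1}{2\sqrt{13}} \le y_3\langle w^*, x_3\rangle \le \|x_3\|$, so $\|x_3\|^2 \ge \tfrac{1}{52}$. The $e_3$ component does not help: the $x_3$-part of the GD update points along $x_3$ itself, so it raises $m_t$ by $\tfrac{\eta}{3} q(m_t)\|x_3\|^2$, using the full norm including the $e_3$ part. Your own sufficient condition $N c_N e^{A_N}/3 < A_N - \log 3$ with $c_N \ge \tfrac{1}{52}$ can therefore hold only for $N < 52/e \approx 19$.

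No other tuning helps either. With $x_1,x_2$ at score $\Theta(\eta)$ and $u_t = \log\eta - m_t$, you get $u_{t+1} \le u_t - \tfrac{1}{156}e^{u_t}(1+o(1))$. If instead $m_t = O(1)$, the next step already gives $m_{t+1} = \Theta(\eta)$. Since $\max_u (u - e^{u}/156) = \log 156 - 1$, you get $u_2 \lesssim \log 156 - 1$ whatever $A_N$ or the tilt of $x_3$ is. Every unstable step requires $u_t \gtrsim \log 3$, so it lowers $u_t$ by at least about $\tfrac{3}{156}$. Hence the transition time of any such construction is bounded by an absolute constant of order $150$, uniformly in $\eta$. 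Your fallback does not rescue this: as you note, the $x_1,x_2$ contributions are $O(\eta e^{-c\eta})$ and cannot offset an $\Omega(1)$ increase of $m_t$ per step.

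The missing idea is to keep \emph{two} points at the $\log\eta$ score scale and make them oscillate. The paper takes $x_1 = (a,b,0)$ and $x_2 = (a,-b,0)$ with $a = \tfrac14 < b = \tfrac12$. It uses the $\eta$-dependent third point only as a launcher: $w_1 = (4\log\tfrac{\eta}{96}, 2\log 2, \tfrac{\eta}{12})$, after which $x_{3,\eta}$ has score $\tfrac{\eta}{24} + O(\log\eta)$ and drops out.

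In the variables $\xi_t = a w_{t,1} - \log\eta$ and $v_t = b w_{t,2}$, the dynamics has an $\eta\to\infty$ limit. In that limit the rescaled loss $\tilde\rho = \tfrac23 e^{-\tilde\xi}\cosh\tilde v$ evolves by $\tilde\rho_{t+1} = \tilde\rho_t e^{-a^2\tilde\rho_t}\cosh(\tilde v_{t+1})/\cosh(\tilde v_t)$ and $\tilde v_{t+1} = \tilde v_t - b^2\tilde\rho_t\tanh(\tilde v_t)$. Because $b > a$, the $v$-update overshoots and $|\tilde v|$ grows faster than $\tilde\xi$. The two points thus take turns being badly classified, which gives an invariant region with $\tilde\rho \ge 72$. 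Convergence for each fixed $t \le N$ then yields $f_{\mathrm{bin}}(w_t) > 36/\eta$.

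This overshooting two-point oscillation keeps the loss at order $1/\eta$ in the limit, rather than merely delaying its decay, and that is what arbitrary $N$ requires. In the paper's family the fragile point for the hard perceptron is $x_2$, with $\delta_\eta = \tfrac1\eta\log\tfrac{\eta}{192}$. Your one-step arguments for (C2) and (C3) apply to it verbatim.
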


The counterexample is obtained in two stages.
We first construct a two-dimensional example with two points for which, from a suitable $\eta$-dependent initialization, the rescaled GD dynamics have a large-$\eta$ limit whose rescaled loss remains bounded away from zero.
We then embed the construction in three dimensions and add a third, $\eta$-dependent point so that one GD step from $\weta{0}=0$ produces the required initialization in the first two coordinates.
The third point is chosen to be well classified after this step, so that its subsequent contribution vanishes as $\eta \to \infty$.
Then, from the second iterate onward, the three-dimensional dynamics recover the limiting behavior of the two-dimensional construction.

Let $q(z) \coloneqq -\ell^\prime(z) = \frac{1}{1 + \exp(z)}$.
GD with step size $\eta > 0$ then has the update rule
\begin{align} \label{eq:binary-gd}
    w_{t+1} =
    w_t + \frac{\eta}{n} \sum_{i=1}^n q \bigl(y_i \langle w_t, x_i\rangle\bigr) \, y_i x_i.
\end{align}

We prove Theorem~\ref{thm:binary-counterexample} through the sequence of auxiliary lemmas below.

\subsection{An auxiliary two-dimensional construction with $\eta$-dependent initialization}

We begin with the two-dimensional dataset consisting of the points
\begin{align} \label{eq:2d-dataset}
    \xp = (a, b),
    \qquad
    \xm = (a, -b),
\end{align}
both with label $y_+ = y_- = 1$, where suitable values for the parameters $a, b > 0$ will be chosen later.
The scores of these two points are 
\begin{align*}
    \langle \weta{t}, \xp \rangle =
    a \weta{t, 1} + b \weta{t, 2},
    \qquad \textrm{ and } \qquad
    \langle \weta{t}, \xm \rangle =
    a \weta{t, 1} - b \weta{t, 2}.
\end{align*}
We define the alternative coordinates
\begin{align} \label{eq:xi-vt-def}
    \xieta{t} = a \weta{t, 1} - \log \eta,
    \qquad \textrm{ and } \qquad
    \veta{t} = b \weta{t, 2}.
\end{align}
In these coordinates, the scores become
\begin{align} \label{eq:margins-wrt-xi-v}
    \langle \weta{t}, \xp \rangle = \log \eta + \xieta{t} + \veta{t},
    \qquad \textrm{ and } \qquad
    \langle \weta{t}, \xm \rangle = \log \eta + \xieta{t} - \veta{t}.
\end{align}
For consistency with the construction with three points in $\reals^3$ later, we use the $1/3$ normalization for this auxiliary two-dimensional dynamics.
Thus, we consider the auxiliary update
\begin{align} \label{eq:two-point-gd-xi-v}
    \weta{t+1} =
    \weta{t} + \frac{\eta}{3} \left( \qetap{t} \, \xp + \qetam{t} \, \xm \right),
\end{align}
where we defined $\qetap{t}$ and $\qetam{t}$ as the logistic weights for $\xp$ and $\xm$, that is,
\begin{align*}
    \qetap{t} \coloneqq q(\log \eta + \xieta{t} + \veta{t}),
    \qquad \textrm{ and } \qquad
    \qetam{t} \coloneqq q(\log \eta + \xieta{t} - \veta{t}).
\end{align*}

Since we are interested in keeping the loss above the $1/\eta$ scale, it will be useful to also track the rescaled loss
\begin{align} \label{eq:rhoeta-def}
    \rhoeta{t} \coloneqq
    \frac{\eta}{3} \Big( \ell(\log \eta + \xieta{t} + \veta{t}) + \ell(\log \eta + \xieta{t} - \veta{t}) \Big).
\end{align}

In the counterexample, we want to keep the loss value above the $1/\eta$ threshold for a certain number of iterations.
The following lemma shows that if a score is roughly $\log \eta$, then its contribution to the loss is on the order of $1/\eta$.
By contrast, the contribution of points with scores much larger than $\log \eta$ vanishes asymptotically.
\begin{lemma} \label{lemma:eta-scale}
    Let $\meta = \log \eta + \reta$.
    Then, as $\eta \to \infty$:
    \begin{enumerate}
        \item If $\reta \to \infty$, then $\eta \ell(\meta) \to 0$.
        \item If $\reta \to c$, then $\eta \ell(\meta) \to e^{-c}$.
    \end{enumerate}
\end{lemma}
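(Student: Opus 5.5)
The plan is to compute $\eta\,\ell(\meta)$ directly. Since $\ell(z)=\log(1+e^{-z})$, we have $e^{-\meta} = e^{-\log\eta - \reta} = e^{-\reta}/\eta$, and hence
\begin{align*}
    \eta\,\ell(\meta) = \eta \log\left(1 + \frac{e^{-\reta}}{\eta}\right).
\end{align*}
Set $u_\eta \coloneqq e^{-\reta}/\eta > 0$. Both claims then follow from the elementary two-sided bound $\frac{u}{1+u} \leq \log(1+u) \leq u$, valid for all $u > 0$.

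For the first claim, the upper bound alone suffices. It gives $0 \leq \eta\,\ell(\meta) \leq \eta u_\eta = e^{-\reta}$, and $e^{-\reta}\to 0$ when $\reta\to\infty$. This part does not even need $\eta\to\infty$ beyond $\reta$ diverging.

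For the second claim, suppose $\reta \to c$. Then $e^{-\reta}\to e^{-c}$, which is a finite positive number. Consequently $u_\eta = e^{-\reta}/\eta \to 0$ because $\eta\to\infty$. The two-sided bound yields
\begin{align*}
    \frac{e^{-\reta}}{1+u_\eta} \leq \eta\,\ell(\meta) \leq e^{-\reta}.
\end{align*}
Both sides tend to $e^{-c}$, so the squeeze theorem gives the claim.

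The only point needing care, and it is minor, is the second case. There one must use that $u_\eta\to 0$, which relies on $\reta$ staying bounded so that $e^{-\reta}$ does not blow up. Apart from that, the argument is a routine Taylor-type estimate, and I expect no real obstacle.
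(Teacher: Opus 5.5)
Your proposal is correct and is essentially the paper's proof. You sandwich $\eta\,\ell(m^{(\eta)})$ between $e^{-r^{(\eta)}}/(1+e^{-r^{(\eta)}}/\eta) = 1/(\eta^{-1}+e^{r^{(\eta)}})$ and $e^{-r^{(\eta)}}$ using $\frac{u}{1+u}\le\log(1+u)\le u$, and pass to the limit. The paper does exactly this, except that it phrases the lower bound as the self-boundedness $q(z)\le\ell(z)$.
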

\begin{proof}
    Since $\log(1 + z) \leq z$ for all $z \geq 0$, we have $\ell(z) \leq \exp(-z) $.
    At the same time, the logistic loss has the self-boundedness property $q(z) \leq \ell(z)$.
    This holds since $\frac{z}{1+z} \leq \log(1+z)$ for all $z \geq 0$.
    Thus,
    \begin{align*}
        \frac{1}{\frac{1}{\eta} + \exp(\reta)} = 
        \frac{\eta}{1 + \eta \exp(\reta)} =
        \eta q(\meta) \leq 
        \eta \ell(\meta) \leq
        \eta \exp(-\meta) = 
        \exp(-\reta).
    \end{align*}
    From here, the conclusion is clear by taking the corresponding limit on both sides.
\end{proof}

We first write the finite-$\eta$ dynamics explicitly in the $(\xi,v)$ coordinates.
\begin{lemma} \label{lemma:xi-v-dynamics}
    The dynamics of $(\xieta{t},\veta{t})$ is given by
    \begin{align*}
        (\xieta{t+1}, \veta{t+1}) =
        \varPhi_\eta (\xieta{t}, \veta{t}),
        \quad\quad
        \textrm{ where }
        \varPhi_\eta(\xi, v) \coloneqq
        \begin{pmatrix}
            \xi + \dfrac{a^2}{3}
            \left( \dfrac{1}{\eta^{-1} + e^{\xi + v}} + \dfrac{1}{\eta^{-1} + e^{\xi - v}} \right) \\[1.2em]
            v + \dfrac{b^2}{3} \left( \dfrac{1}{\eta^{-1} + e^{\xi + v}} - \dfrac{1}{\eta^{-1} + e^{\xi - v}} \right)
        \end{pmatrix}.
    \end{align*}
\end{lemma}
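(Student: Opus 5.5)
The proof is a direct computation. I will start from the auxiliary update \eqref{eq:two-point-gd-xi-v} and project it onto the two coordinates. Because $\xp = (a,b)$ and $\xm = (a,-b)$, the first coordinate of $\weta{t+1}$ is
\begin{align*}
    \weta{t+1,1} = \weta{t,1} + \frac{\eta a}{3}\bigl(\qetap{t} + \qetam{t}\bigr),
\end{align*}
and the second coordinate is
\begin{align*}
    \weta{t+1,2} = \weta{t,2} + \frac{\eta b}{3}\bigl(\qetap{t} - \qetam{t}\bigr).
\end{align*}
Multiplying the first identity by $a$ and subtracting $\log\eta$ from both sides gives, by the definition \eqref{eq:xi-vt-def}, $\xieta{t+1} = \xieta{t} + \frac{\eta a^2}{3}(\qetap{t}+\qetam{t})$. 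The $\log\eta$ shift cancels because it appears on both sides. Similarly, multiplying the second identity by $b$ gives $\veta{t+1} = \veta{t} + \frac{\eta b^2}{3}(\qetap{t}-\qetam{t})$.

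It remains to rewrite $\eta q(\cdot)$ in closed form. Using $q(z) = 1/(1+e^z)$ and the score expressions \eqref{eq:margins-wrt-xi-v}, we have
\begin{align*}
    \eta\,\qetap{t} = \frac{\eta}{1+\eta e^{\xieta{t}+\veta{t}}} = \frac{1}{\eta^{-1} + e^{\xieta{t}+\veta{t}}},
\end{align*}
and the same computation with $-\veta{t}$ in place of $\veta{t}$ handles $\eta\,\qetam{t}$. Substituting these two expressions into the recursions above yields exactly the map $\varPhi_\eta$ evaluated at $(\xieta{t},\veta{t})$.

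There is no real obstacle here. The only points needing care are the sign in the $v$-component, which comes from the $-b$ in $\xm$, and the observation that the $-\log\eta$ offset in $\xi$ is time-independent, so it drops out of the increment.
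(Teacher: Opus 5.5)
Your proposal is correct and takes essentially the same approach as the paper. Both project the update \eqref{eq:two-point-gd-xi-v} onto each coordinate, scale by $a$ and $b$ (the constant $\log\eta$ offset drops out), and rewrite $\eta\,q(\log\eta+\xi\pm v) = 1/(\eta^{-1}+e^{\xi\pm v})$ using the score expressions \eqref{eq:margins-wrt-xi-v}.
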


\begin{proof}
    Using the definitions of $\xieta{t}$ and $\veta{t}$ together with the GD update in Eq.~\eqref{eq:two-point-gd-xi-v}, we have
    \begin{align*}
        \xieta{t+1} =
        a \weta{t+1, 1} - \log \eta =
        \xieta{t} + \frac{a^2 \eta}{3} \left( \qetap{t} + \qetam{t} \right),
    \end{align*}
    and similarly for $\veta{t}$, we obtain
    \begin{align*}
        \veta{t+1} =
        b \weta{t+1, 2} =
        \veta{t} + \frac{b^2 \eta}{3} \left( \qetap{t} - \qetam{t} \right).
    \end{align*}
    From the expressions for the scores in Eq.~\eqref{eq:margins-wrt-xi-v},
    \begin{align*}
        \eta \qetap{t} &=
        \frac{\eta}{1 + \eta e^{\xieta{t} + \veta{t}}} =
        \frac{1}{\eta^{-1} + e^{\xieta{t} + \veta{t}}}, \\
        \eta \qetam{t} &=
        \frac{\eta}{1 + \eta e^{\xieta{t} - \veta{t}}} =
        \frac{1}{\eta^{-1} + e^{\xieta{t} - \veta{t}}}.
    \end{align*}
    Substituting these expressions into the two update rules gives the claimed iteration map.
\end{proof}

The iteration map has a well-defined limit as $\eta \to \infty$.
Indeed, for every fixed $(\xi,v) \in \reals^2$,
\begin{align*}
    \varPhi_\eta(\xi,v) \to
    \tilde\varPhi(\xi,v) \coloneqq
    \begin{pmatrix}
        \xi + \frac{2 a^2}{3} e^{-\xi} \cosh(v) \\
        v - \frac{2 b^2}{3} e^{-\xi} \sinh(v)
    \end{pmatrix}.
\end{align*}
Let $(\xiinf{t},\vinf{t})$ denote the iterates of the limiting map $\tilde\varPhi$, initialized at the same $\eta$-independent point as the finite-$\eta$ dynamics, namely
\begin{align} \label{eq:xi-v-limiting-system}
    (\xiinf{t+1}, \vinf{t+1}) =
    \tilde\varPhi (\xiinf{t}, \vinf{t}),
    \qquad\qquad
    (\xiinf{0}, \vinf{0}) = (\xi_0, v_0).
\end{align}

Then, we obtain convergence of the corresponding trajectories over any fixed number of steps.
\begin{lemma} \label{lemma:2d-eta-system-convergence}
    For every fixed $t \geq 0$, it holds that $\lim_{\eta \to \infty} (\xieta{t},\veta{t}) = (\xiinf{t},\vinf{t})$.
\end{lemma}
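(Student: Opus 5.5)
We argue by induction on $t$, using that the maps $\varPhi_\eta$ converge to $\tilde\varPhi$ locally uniformly and that $\tilde\varPhi$ is continuous. The base case $t=0$ is immediate: both trajectories start from the same $\eta$-independent point $(\xi_0,v_0)$. (If in the intended application the finite-$\eta$ initialization depends on $\eta$ but converges to $(\xi_0,v_0)$, the same argument applies, since we only use convergence at time $t$.)

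For the inductive step, assume $(\xieta{t},\veta{t}) \to (\xiinf{t},\vinf{t})$. Write
\begin{align*}
    \varPhi_\eta(\xieta{t},\veta{t}) - \tilde\varPhi(\xiinf{t},\vinf{t})
    = \bigl[\varPhi_\eta(\xieta{t},\veta{t}) - \tilde\varPhi(\xieta{t},\veta{t})\bigr]
    + \bigl[\tilde\varPhi(\xieta{t},\veta{t}) - \tilde\varPhi(\xiinf{t},\vinf{t})\bigr].
\end{align*}
The second bracket tends to zero because $\tilde\varPhi$ is continuous (it is built from $e^{-\xi}$, $\cosh$ and $\sinh$).

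For the first bracket, by the induction hypothesis $(\xieta{t},\veta{t})$ lies, for $\eta$ large, in a compact set $C=[\xiinf{t}-1,\xiinf{t}+1]\times[\vinf{t}-1,\vinf{t}+1]$. On $C$ the exponents $e^{\xi\pm v}$ are bounded below by some $c>0$. Hence, for each sign,
\begin{align*}
    \left| \frac{1}{\eta^{-1}+e^{\xi\pm v}} - e^{-(\xi\pm v)} \right|
    = \frac{\eta^{-1}}{(\eta^{-1}+e^{\xi\pm v})\,e^{\xi\pm v}}
    \le \frac{1}{\eta c^2},
\end{align*}
uniformly on $C$. Since $e^{-(\xi+v)}+e^{-(\xi-v)} = 2e^{-\xi}\cosh v$ and $e^{-(\xi+v)}-e^{-(\xi-v)} = -2e^{-\xi}\sinh v$, this shows $\sup_C \|\varPhi_\eta-\tilde\varPhi\| \le \tfrac{2(a^2+b^2)}{3\eta c^2}\to 0$. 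Therefore the first bracket also vanishes, and the induction closes.

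The main point requiring care is that the convergence $\varPhi_\eta\to\tilde\varPhi$ must be uniform on a neighborhood of the limit point, not merely pointwise, since the argument is evaluated at the moving point $(\xieta{t},\veta{t})$. The explicit bound above supplies this, and it relies on the lower bound $e^{\xi\pm v}\ge c$ on the compact set $C$.
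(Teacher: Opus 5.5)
Your proof is correct and follows the paper's approach: the same induction on $t$, the same base case, and the same inductive step $\varPhi_\eta(\xieta{t},\veta{t}) \to \tilde\varPhi(\xiinf{t},\vinf{t})$. The paper justifies that step only by citing ``the explicit form of $\varPhi_\eta$,'' which amounts to joint continuity in $(\eta^{-1},\xi,v)$. You spell this out with the locally uniform bound $\frac{2(a^2+b^2)}{3\eta c^2}$ and the continuity of $\tilde\varPhi$, which is the rigor that step needs.
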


\begin{proof}
    We argue by induction on $t$.
    The claim is immediate at $t=0$ since the two systems start from the same initial point.
    Now, fix $t \geq 0$ and suppose that
    $(\xieta{t}, \veta{t}) \to (\xiinf{t},\vinf{t})$.
    By the explicit form of $\varPhi_\eta$, it follows that
    $\varPhi_\eta (\xieta{t}, \veta{t}) \to \tilde \varPhi(\xiinf{t}, \vinf{t})$.
    Therefore, $ (\xieta{t+1}, \veta{t+1}) \to (\xiinf{t+1}, \vinf{t+1}) $, which completes the induction.
\end{proof}

We next characterize the limiting behavior of the rescaled loss and derive the corresponding dynamics in the $(\rho, v)$ variables.
\begin{lemma} \label{lemma:rho-limit-and-system}
    For every fixed $t$, the rescaled loss $\rhoeta{t}$ (Eq.~\eqref{eq:rhoeta-def}) has a limit as $\eta \to \infty$:
    \begin{align*}
        \rhoeta{t} \to
        \rhoinf{t} \coloneqq \frac{2}{3} e^{-\xiinf{t}} \cosh(\vinf{t}).
    \end{align*}
    Moreover, the limiting variables $(\rhoinf{t}, \vinf{t})$ satisfy
    \begin{align} \label{eq:rho-v-limiting-system}
        \rhoinf{t+1} &= \rhoinf{t} e^{-a^2 \rhoinf{t}} \frac{\cosh(\vinf{t+1})}{\cosh(\vinf{t})}, \notag \\
        \vinf{t+1} &= \vinf{t} - b^2 \rhoinf{t} \tanh(\vinf{t}).
    \end{align}
\end{lemma}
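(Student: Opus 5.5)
The plan has two parts: first compute the limit of $\rhoeta{t}$ directly from Lemmas~\ref{lemma:eta-scale} and~\ref{lemma:2d-eta-system-convergence}, then derive the recursion~\eqref{eq:rho-v-limiting-system} by algebra on the explicit map $\tilde\varPhi$.

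\emph{The limit of $\rhoeta{t}$.} Fix $t$. By Lemma~\ref{lemma:2d-eta-system-convergence}, $(\xieta{t},\veta{t}) \to (\xiinf{t},\vinf{t})$. The two scores in~\eqref{eq:margins-wrt-xi-v} have the form $\log\eta + \reta$ with
\begin{align*}
    \reta = \xieta{t} \pm \veta{t} \to \xiinf{t} \pm \vinf{t}.
\end{align*}
Applying the second case of Lemma~\ref{lemma:eta-scale} to each score gives
\begin{align*}
    \eta\,\ell(\log\eta + \xieta{t} \pm \veta{t}) \to e^{-\xiinf{t} \mp \vinf{t}}.
\end{align*}
Summing the two limits and multiplying by $1/3$ yields $\rhoeta{t} \to \frac{1}{3}e^{-\xiinf{t}}\bigl(e^{-\vinf{t}} + e^{\vinf{t}}\bigr) = \frac{2}{3}e^{-\xiinf{t}}\cosh(\vinf{t})$, which is the claimed $\rhoinf{t}$.

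\emph{The recursion.} Everything follows from the definition $\rhoinf{t} = \frac{2}{3}e^{-\xiinf{t}}\cosh(\vinf{t})$ and the formula for $\tilde\varPhi$.
\begin{itemize}
    \item The $\xi$-update reads $\xiinf{t+1} = \xiinf{t} + a^2 \cdot \frac{2}{3}e^{-\xiinf{t}}\cosh(\vinf{t}) = \xiinf{t} + a^2\rhoinf{t}$.
    \item The $v$-update reads $\vinf{t+1} = \vinf{t} - b^2\cdot\frac{2}{3}e^{-\xiinf{t}}\sinh(\vinf{t})$. Writing $\sinh = \tanh\cdot\cosh$, this becomes $\vinf{t+1} = \vinf{t} - b^2\rhoinf{t}\tanh(\vinf{t})$, which is the second equation.
    \item For the first equation, substitute the two updates into the definition of $\rhoinf{t+1}$:
    \begin{align*}
        \rhoinf{t+1} = \tfrac{2}{3}e^{-\xiinf{t}}e^{-a^2\rhoinf{t}}\cosh(\vinf{t+1}).
    \end{align*}
    Multiplying and dividing by $\cosh(\vinf{t})$, which is at least $1$ and hence nonzero, gives $\rhoinf{t}e^{-a^2\rhoinf{t}}\cosh(\vinf{t+1})/\cosh(\vinf{t})$, as claimed.
\end{itemize}

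No step is a real obstacle; the argument is bookkeeping. The one point to check is that Lemma~\ref{lemma:eta-scale} applies. It does: its hypothesis asks only that $\reta$ converge to a finite constant, and Lemma~\ref{lemma:2d-eta-system-convergence} guarantees this for each fixed $t$.
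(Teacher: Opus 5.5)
Your proposal is correct and follows essentially the same route as the paper. The limit of $\rhoeta{t}$ comes from Lemma~\ref{lemma:2d-eta-system-convergence} together with the second case of Lemma~\ref{lemma:eta-scale}, and the recursion~\eqref{eq:rho-v-limiting-system} comes from rewriting the limiting updates as $\xiinf{t+1} = \xiinf{t} + a^2 \rhoinf{t}$ and $\vinf{t+1} = \vinf{t} - b^2 \rhoinf{t} \tanh(\vinf{t})$, then substituting into the definition of $\rhoinf{t+1}$.
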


\begin{proof}
    By Lemma~\ref{lemma:2d-eta-system-convergence}, $\xieta{t} \to \xiinf{t}$ and $\veta{t} \to \vinf{t}$ as $\eta \to \infty$.
    Lemma~\ref{lemma:eta-scale} therefore gives
    \begin{align*}
        3 \rhoeta{t}
        = \eta \Big( \ell(\log \eta + \xieta{t} + \veta{t}) + \ell(\log \eta + \xieta{t} - \veta{t}) \Big)
        \to e^{-\xiinf{t} - \vinf{t}} + e^{-\xiinf{t} + \vinf{t}}
        = 2 e^{-\xiinf{t}} \cosh(\vinf{t}),
    \end{align*}
    which proves the first claim.
    For the second claim, using the definition of $\rhoinf{t}$, the limiting dynamics in Eq.~\eqref{eq:xi-v-limiting-system} can be written as
    \begin{align*}
        \xiinf{t+1} = \xiinf{t} + a^2 \rhoinf{t},
        \qquad \textrm{ and } \qquad
        \vinf{t+1} = \vinf{t} - b^2 \rhoinf{t} \tanh(\vinf{t}).
    \end{align*}
    Observe that the second one is exactly the update rule of $\vinf{t}$ from Eq.~\eqref{eq:rho-v-limiting-system}.
    Also,
    \begin{align*}
        \rhoinf{t+1}
        = \frac{2}{3} e^{-\xiinf{t+1}} \cosh(\vinf{t+1})
        = \frac{2}{3} e^{-\xiinf{t}} e^{-a^2 \rhoinf{t}} \cosh(\vinf{t+1})
        = \rhoinf{t} e^{-a^2 \rhoinf{t}} \frac{\cosh(\vinf{t+1})}{\cosh(\vinf{t})},
    \end{align*}
    which gives Eq.~\eqref{eq:rho-v-limiting-system}.
\end{proof}

We next characterize a forward invariant region for the $(\rhoinf{t}, \vinf{t})$ dynamics.
\begin{lemma} \label{lemma:invariant-region}
    If $b > a$, there exist $\tilde v^* > 0, c > 0$ and $R > 0$ such that the region 
    \begin{align*}
        \mathcal R = \{ (\tilde\rho, \tilde v) \, \mid \, \tilde \rho \geq R, \, \tilde v^* \leq \left| \tilde v \right| \leq c \tilde\rho \}
    \end{align*}
    is invariant under the limiting dynamical system in Eq.~\eqref{eq:rho-v-limiting-system}.
    Specifically, it suffices to pick $\tilde v^*$ large enough such that $\tanh(\tilde v^*) > a^2 / b^2$, take $c = (b^2 \tanh(\tilde v^*) - a^2) / 4$ and $R > 0$ large enough such that 
    \begin{align*}
        R (b^2 \tanh(\tilde v^*) - c ) > \tilde v^*,
        \quad \textrm{ and } \quad
        \frac{1}{2} \exp \left( (-a^2 + b^2 \tanh(\tilde v^*) - 2c) R \right) > \max \left\{1, \frac{b^2}{c} \right\}.
    \end{align*}
\end{lemma}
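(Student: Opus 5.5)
The plan is a direct one-step verification. Take $(\tilde\rho,\tilde v)\in\mathcal R$, let $(\tilde\rho',\tilde v')$ be its image under~\eqref{eq:rho-v-limiting-system}, and check the three defining conditions $\tilde\rho'\geq R$, $|\tilde v'|\geq\tilde v^*$ and $|\tilde v'|\leq c\tilde\rho'$. The map is odd in $\tilde v$: $\tanh$ is odd and $\cosh$ is even, so $(\tilde\rho,-\tilde v)\mapsto(\tilde\rho',-\tilde v')$. Hence we may assume $\tilde v\geq\tilde v^*>0$. Two facts are used throughout. First, $\tanh(\tilde v)\geq\tanh(\tilde v^*)$ by monotonicity. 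Second, $\kappa\coloneqq b^2\tanh(\tilde v^*)-a^2=4c>0$, which holds by the choice of $\tilde v^*$ (this is where $b>a$ enters).

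\emph{Step 1: the $v$-update.} The update is $\tilde v'=\tilde v-b^2\tilde\rho\tanh(\tilde v)$. Using $\tilde v\leq c\tilde\rho$, we get $\tilde v'\leq \tilde\rho\,(c-b^2\tanh(\tilde v^*))<0$, so the sign flips. Moreover,
\begin{align*}
    |\tilde v'| = b^2\tilde\rho\tanh(\tilde v)-\tilde v \geq \tilde\rho\bigl(b^2\tanh(\tilde v^*)-c\bigr) \geq R\bigl(b^2\tanh(\tilde v^*)-c\bigr) > \tilde v^*,
\end{align*}
by the first condition on $R$. In the other direction, $\tanh\leq 1$ and $\tilde v>0$ give $|\tilde v'|\leq b^2\tilde\rho$.

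\emph{Step 2: the $\rho$-update.} This is the main (but still elementary) step. Use $\cosh(\tilde v')\geq \tfrac12 e^{|\tilde v'|}$ and $\cosh(\tilde v)\leq e^{\tilde v}$ to obtain
\begin{align*}
    \tilde\rho' \geq \frac{\tilde\rho}{2}\exp\bigl(-a^2\tilde\rho+|\tilde v'|-\tilde v\bigr).
\end{align*}
By Step 1, $|\tilde v'|-\tilde v=b^2\tilde\rho\tanh(\tilde v)-2\tilde v\geq \tilde\rho\,(b^2\tanh(\tilde v^*)-2c)$. Therefore
\begin{align*}
    \tilde\rho' \geq \frac{\tilde\rho}{2}\exp\bigl((-a^2+b^2\tanh(\tilde v^*)-2c)\,\tilde\rho\bigr).
\end{align*}
The coefficient in the exponent equals $4c-2c=2c>0$, so the right-hand factor is increasing in $\tilde\rho$. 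Since $\tilde\rho\geq R$, the second condition on $R$ yields
\begin{align*}
    \tilde\rho' > \tilde\rho\max\{1,b^2/c\}.
\end{align*}

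\emph{Conclusion.} Step 2 gives $\tilde\rho'>\tilde\rho\geq R$. Combining Steps 1 and 2 gives $c\tilde\rho'>b^2\tilde\rho\geq|\tilde v'|$. Together with $|\tilde v'|>\tilde v^*$ from Step 1, this shows $(\tilde\rho',\tilde v')\in\mathcal R$. The only delicate point is the bookkeeping in Step 2. The crude bounds on $\cosh$ and the worst case $\tilde v=c\tilde\rho$ must still leave a positive exponential rate, and the choice $c=\kappa/4$ is made precisely so that this rate is $2c>0$.
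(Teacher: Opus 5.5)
Your proposal is correct and follows essentially the same argument as the paper. The paper likewise shows the sign flip giving $|\tilde v'| = b^2\tilde\rho\tanh(\tilde v)-\tilde v \in [\tilde\rho(b^2\tanh(\tilde v^*)-c),\, b^2\tilde\rho]$, then bounds $\tilde\rho'/\tilde\rho \geq \tfrac12 e^{(-a^2+b^2\tanh(\tilde v^*)-2c)R} > \max\{1,b^2/c\}$ using the same $\cosh$ estimates. The only difference is cosmetic: it works with $\tilde s=|\tilde v|$, whereas you invoke oddness of the map to assume $\tilde v>0$.
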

\begin{proof}
    Let $(\tilde \rho, \tilde v) \coloneqq (\rhoinf{t}, \vinf{t}) \in \mathcal R$.
    We want to show that $(\tilde \rho_+, \tilde v_+) \coloneqq (\rhoinf{t+1}, \vinf{t+1}) \in \mathcal R$, too.
    Let $\tilde s = \left| \tilde v \right|$ and $\tilde s_+ = \left| \tilde v_+ \right|$, and observe that since $\cosh(\cdot)$ is even and $\tanh(\cdot)$ is odd, the dynamics becomes
    \begin{align*}
        \tilde \rho_+ = \tilde \rho e^{-a^2 \tilde \rho} \frac{\cosh(\tilde s_+)}{\cosh(\tilde s)},
        \qquad \textrm{ and } \qquad 
        \tilde s_+ = \left| \tilde s - b^2 \tilde \rho \tanh(\tilde s) \right|.
    \end{align*}

    We claim that by the choice of $c$, it holds that $b^2 \tanh(\tilde v^*) > c$.
    Indeed,
    \begin{align*}
        b^2 \tanh(\tilde v^*) - c =
        \frac{3 b^2 \tanh(\tilde v^*) + a^2}{4} > 0.
    \end{align*}
    Since $(\tilde \rho, \tilde v)$ belongs to $\mathcal R$, we have $\tilde s \geq \tilde v^*$ and so $b^2 \tanh(\tilde s) \geq b^2 \tanh(\tilde v^*) > c$ by the previous inequality.
    This shows that $b^2 \tilde\rho \tanh(\tilde s) > c \tilde \rho \geq \left| \tilde v \right| = \tilde s$ (again since the initial point belongs to $\mathcal R$), and so
    \begin{align} \label{eq:tilde_s_plus}
        \tilde s_+ = b^2 \tilde \rho \tanh(\tilde s) - \tilde s .
    \end{align}
    Using again $\tilde s \in [\tilde v^*, c \tilde \rho]$, this gives
    \begin{align*}
        b^2 \tilde \rho \tanh(\tilde v^*) - c \tilde \rho \leq
        b^2 \tilde \rho \tanh(\tilde v^*) - \tilde s \leq
        \tilde s_+ \leq
        b^2 \tilde \rho \tanh(c \tilde \rho) - \tilde s \leq 
        b^2 \tilde \rho \tanh(c \tilde \rho) \leq 
        b^2 \tilde \rho
    \end{align*}
    Therefore, since $\tilde \rho \geq R$, in order to show $\tilde s_+ \in [\tilde v^*, c \tilde \rho_+]$, it suffices to impose that
    \begin{align} \label{eq:sufficient-condition-for-tilde-s}
        R (b^2 \tanh(\tilde v^*) - c) \geq \tilde v^*,
        \qquad \textrm{ and } \qquad
        b^2 \tilde\rho \leq c \tilde \rho_+.
    \end{align}
    We already require $R$ large enough so that the first condition is satisfied.
    We show that the conditions we imposed also imply that the second condition is satisfied.
    Using $\frac{1}{2} e^x \leq \cosh(x) \leq e^x$ for all $x > 0$, we obtain
    \begin{align} \label{eq:rho_plus_rho_ratio}
        \frac{\tilde \rho_+}{\tilde \rho} = 
        e^{-a^2 \tilde \rho} \frac{\cosh(\tilde s_+)}{\cosh(\tilde s)} \geq
        \frac{1}{2} e^{-a^2 \tilde \rho + \tilde s_+ - \tilde s} =
        \frac{1}{2} e^{-a^2 \tilde \rho + b^2 \tilde \rho \tanh(\tilde s) - 2 \tilde s} \geq
        \frac{1}{2} e^{(-a^2 + b^2 \tanh(\tilde v^*) - 2 c) R},
    \end{align}
    where the third step follows from expression for $\tilde s_+$ from Eq.~\eqref{eq:tilde_s_plus}, and the last step uses $\tilde s \in [\tilde v^*, c \tilde \rho]$ and $\tilde \rho \geq R$.
    By the choices of $c$ and $\tilde v^*$, we have
    \begin{align*}
        -a^2 + b^2 \tanh(\tilde v^*) - 2c = 
        \frac{ b^2 \tanh(\tilde v^*) - a^2}{2} > 0,
    \end{align*}
    so we can make the right-hand side of Eq.~\eqref{eq:rho_plus_rho_ratio} larger by taking a larger $R$.
    Given the second condition we impose on $R$, we obtain both $\frac{\tilde \rho_+}{\tilde \rho} \geq \frac{b^2}{c}$, which by Eq.~\eqref{eq:sufficient-condition-for-tilde-s} suffices to obtain $\tilde s_+ \in [\tilde v^*, c \tilde \rho_+]$, as well as $\tilde \rho_+ \geq \tilde \rho \geq R$, which is the only condition that $\tilde \rho_+$ must satisfy to conclude that $(\tilde \rho_+, \tilde s_+) \in \mathcal R$.
\end{proof}

We now choose concrete values of $a$ and $b$ and initialize the limiting dynamics inside the invariant region.
\begin{lemma} \label{lemma:invariant-region-specialized}
    Take $a = \frac{1}{4}$ and $b = \frac{1}{2}$, and initialize Eq.~\eqref{eq:rho-v-limiting-system} at $(\rhoinf{0},\vinf{0}) = (80,\log(2))$.
    Then,
    \begin{align*}
        (\rhoinf{t}, \vinf{t}) \in \mathcal R \coloneqq  \left\{ (\tilde \rho, \tilde v) \mid \tilde \rho \geq 72, \, \log(2) \leq \left| \tilde v \right| \leq \frac{7}{320} \tilde \rho \right\}
        \qquad \textrm{ for all } t \geq 0.
    \end{align*}
\end{lemma}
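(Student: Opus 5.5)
The plan is to obtain the claim as a direct specialization of Lemma~\ref{lemma:invariant-region} with $a = \frac14$, $b = \frac12$, $\tilde v^* = \log 2$ and $R = 72$, followed by induction on $t$. First I would check that these choices meet the hypotheses of that lemma. Then I would check that the initial point lies in the resulting region, after which invariance gives membership for every $t \geq 0$.

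\textbf{Step 1: the parameters.} With $a^2 = \frac{1}{16}$ and $b^2 = \frac14$ we have $b > a$. Also
\[
  \tanh(\log 2) = \frac{2 - \frac12}{2 + \frac12} = \frac35 > \frac14 = \frac{a^2}{b^2},
\]
so $\tilde v^* = \log 2$ is admissible. The prescribed constant is then
\[
  c = \frac{b^2 \tanh(\tilde v^*) - a^2}{4} = \frac{\frac{3}{20} - \frac{1}{16}}{4} = \frac{7}{320},
\]
which matches the slope in the stated region $\mathcal R$.

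\textbf{Step 2: the two conditions on $R = 72$.}
\begin{itemize}
  \item For the first condition, $b^2 \tanh(\tilde v^*) - c = \frac{48}{320} - \frac{7}{320} = \frac{41}{320}$. Hence $R\,(b^2 \tanh(\tilde v^*) - c) = \frac{72 \cdot 41}{320} \approx 9.2$, which exceeds $\log 2$.
  \item For the second condition, as computed in the proof of Lemma~\ref{lemma:invariant-region}, the exponent coefficient is $-a^2 + b^2\tanh(\tilde v^*) - 2c = \frac{b^2 \tanh(\tilde v^*) - a^2}{2} = \frac{7}{160}$. The exponent is therefore $\frac{7 \cdot 72}{160} = 3.15$. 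On the other side, $\max\{1, b^2/c\} = \frac{80}{7} \approx 11.43$.
\end{itemize}
So the second condition reduces to the numerical inequality $\frac12 e^{3.15} > \frac{80}{7}$.

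\textbf{Step 3: the main obstacle.} This numerical inequality is the only genuinely delicate step, because the margin is small: $\frac12 e^{3.15} \approx 11.67$ against $11.43$. I would certify it with a rigorous lower bound. One option is the truncated series $e^{0.15} \geq 1 + 0.15 + \frac{0.15^2}{2} > 1.161$ combined with $e^3 > 20.08$. This gives $\frac12 e^{3.15} > 11.65 > \frac{80}{7}$.

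\textbf{Step 4: the initial point.} We have $\rhoinf{0} = 80 \geq 72$ and $\vinf{0} = \log 2 \geq \tilde v^*$. Also $\log 2 < 1 < \frac{7}{320} \cdot 80 = \frac74$. Hence $(\rhoinf{0}, \vinf{0}) \in \mathcal R$.

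\textbf{Step 5: conclusion.} The invariance statement of Lemma~\ref{lemma:invariant-region} applies to the system~\eqref{eq:rho-v-limiting-system}. By induction on $t$, $(\rhoinf{t}, \vinf{t}) \in \mathcal R$ for all $t \geq 0$.
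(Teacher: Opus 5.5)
Your proposal is correct and matches the paper's proof: you specialize Lemma~\ref{lemma:invariant-region} with $\tilde v^* = \log 2$, $c = 7/320$ and $R = 72$, check its two conditions on $R$ (the paper records the thresholds $\approx 5.4$ and $\approx 71.5$), and observe that $(80, \log 2) \in \mathcal R$. Your explicit bound $\frac12 e^{3.15} > 11.65 > \frac{80}{7}$ rigorously certifies the tight second condition, which the paper only states numerically.
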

\begin{proof}
    First, we use the conditions in Lemma~\ref{lemma:invariant-region} to pick the constants for defining the invariant region $\mathcal R$ for the specific choices of $b$ and $a$.
    Since $a^2 / b^2 = 1/4$, it suffices to have $\tilde v^* > \tanh^{-1}(\frac{1}{4})$, so we choose $\tilde v^\ast = \log(2)$.
    This gives $c = (b^2 \tanh(\tilde v^*) - a^2) / 4 = 7/320$.
    It remains to pick $R > 0$ so that 
    \begin{align*}
        R > \frac{\tilde v^*}{b^2 \tanh(\tilde v^*) - c} \approx 5.4,
        \quad \textrm{ and } \quad
        R > \frac{\log\left( 2 \max \left\{ 1, \frac{b^2}{c} \right\} \right)}{-a^2 + b^2 \tanh(\tilde v^*) - 2 c} \approx 71.5,
    \end{align*}
    so we choose $R = 72$.
    This gives the region $\mathcal R = \left\{ (\tilde \rho, \tilde v) \mid \tilde\rho \geq 72, \, \left| \tilde v \right| \in [\log(2), \frac{7}{320} \tilde \rho] \right\}$.
    Since $(\rhoinf{0}, \vinf{0}) = (80,\log(2)) \in \mathcal R$, Lemma~\ref{lemma:invariant-region} implies that $(\rhoinf{t}, \vinf{t}) \in \mathcal R$ for all $t \geq 0$.
\end{proof}

We now choose the initialization of the finite-$\eta$ dynamics so that the corresponding limiting dynamics in Eq.~\eqref{eq:rho-v-limiting-system} starts from $(\rhoinf{0}, \vinf{0}) = (80, \log(2))$.
Since $\rhoinf{0} = \frac{2}{3} e^{-\xiinf{0}} \cosh(\vinf{0})$ (by Lemma~\ref{lemma:rho-limit-and-system}), we obtain $\xiinf{0} = -\log(96)$.
Thus, using the definitions in Eq.~\eqref{eq:xi-vt-def}, we choose
\begin{align*}
    \weta{0} = \left(4 \log\frac{\eta}{96}, 2\log(2) \right),
\end{align*}
so that $(\xieta{0}, \veta{0}) = (\xiinf{0}, \vinf{0})$.

\subsection{A three-dimensional construction from zero initialization}

With $a = \frac{1}{4}, b = \frac{1}{2}$, consider now the dataset consisting of three points
\begin{align} \label{eq:3d-dataset}
    x_1 = (a, b, 0), \qquad
    x_2 = (a, -b, 0), \qquad
    x_{3, \eta} = \left( -\frac{1}{2} + \frac{24}{\eta} \log\frac{\eta}{96}, \frac{12 \log 2}{\eta}, \frac{1}{2} \right),  
\end{align}
with $y_1 = y_2 = y_3 = 1$.
Just as in the two-dimensional construction, define
\begin{align*}
    \xieta{t} = \frac{1}{4} \weta{t, 1} - \log\eta,
    \qquad \textrm{ and } \qquad
    \veta{t} = \frac{1}{2} \weta{t, 2},
\end{align*}
and the rescaled loss $ \rhoeta{t} = \eta f_\mathrm{bin}(\weta{t}) $.
We first identify the limiting behavior of $(\xieta{t}, \veta{t})$ as $\eta \to \infty$, which matches the limiting two-dimensional dynamics $(\xiinf{t}, \vinf{t})$ (with a one-step delay).
We also show that the score of the third point remains of order $\eta$, so that its contribution to both the GD update and the rescaled loss vanishes as $\eta \to \infty$.
\begin{lemma} \label{lemma:3d-induction}
    Let $\weta{t}$ denote the GD iterates on the dataset in Eq.~\eqref{eq:3d-dataset}, starting from $\weta{0} = 0$.
    Let $(\xiinf{t}, \vinf{t})$ denote the limiting system from the two-dimensional construction (Eq.~\eqref{eq:xi-v-limiting-system}), initialized at
    \begin{align*}
        (\xiinf{0}, \vinf{0}) = (-\log(96), \log(2)).
    \end{align*}
    Then, for every fixed $t \geq 1$,
    \begin{align} \label{eq:xi-v-induction}
        \lim_{\eta \to \infty} (\xieta{t}, \veta{t}) = (\xiinf{t-1}, \vinf{t-1}),
        \quad\quad \textrm{ and } \quad\quad
        \langle \weta{t}, x_{3, \eta} \rangle = \frac{\eta}{24} + \mathcal O(\log \eta).
    \end{align}
\end{lemma}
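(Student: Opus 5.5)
The plan is to prove the base case $t=1$ by direct computation, then run an induction on $t$ that reproduces the argument of Lemma~\ref{lemma:2d-eta-system-convergence}. Along the way we carry the estimate on the third score, which shows that $x_{3,\eta}$ is too well classified to affect the first two coordinates.

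\emph{Base case.} From $\weta{0}=0$, every point has score $0$ and logistic weight $q(0)=\tfrac12$. By~\eqref{eq:binary-gd} with $n=3$,
\begin{align*}
    \weta{1} = \frac{\eta}{6}\left(x_1+x_2+x_{3,\eta}\right)
    = \frac{\eta}{6}\left(2a-\tfrac12+\tfrac{24}{\eta}\log\tfrac{\eta}{96},\ \tfrac{12\log 2}{\eta},\ \tfrac12\right)
    = \left(4\log\tfrac{\eta}{96},\ 2\log 2,\ \tfrac{\eta}{12}\right),
\end{align*}
where we used $2a=\tfrac12$. Hence $\xieta{1}=\log\tfrac{\eta}{96}-\log\eta=-\log 96$ and $\veta{1}=\log 2$. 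These hold exactly for every $\eta$, not just in the limit. Moreover,
\begin{align*}
    \langle\weta{1},x_{3,\eta}\rangle = 4\log\tfrac{\eta}{96}\left(-\tfrac12+\mathcal O\!\left(\tfrac{\log\eta}{\eta}\right)\right)+\mathcal O\!\left(\tfrac1\eta\right)+\tfrac{\eta}{24} = \tfrac{\eta}{24}+\mathcal O(\log\eta).
\end{align*}

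\emph{Inductive step.} Assume~\eqref{eq:xi-v-induction} holds at some $t\geq1$. The scores of $x_1$ and $x_2$ are $\log\eta+\xieta{t}\pm\veta{t}$, since the third coordinates of $x_1,x_2$ vanish. Write $u=\eta^{-1}$. Then
\begin{align*}
    \eta\,q\!\left(\log\eta+\xi\pm v\right) = \frac{1}{u+e^{\xi\pm v}},
\end{align*}
which is jointly continuous in $(u,\xi,v)$ up to and including $u=0$. By the inductive hypothesis these quantities converge, and in particular they stay bounded. For the third point, $q(z)\leq e^{-z}$ together with the score hypothesis gives
\begin{align*}
    \eta\, q_3 \leq \eta\, e^{-\eta/24+\mathcal O(\log\eta)} \to 0.
\end{align*}
Its contribution to the first two coordinates, $(\eta/3)q_3(x_{3,\eta})_{1,2}$, therefore vanishes, because $\|x_{3,\eta}\|\leq1$. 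So the $(\xi,v)$ update equals $\varPhi_\eta(\xieta{t},\veta{t})$ plus an $o(1)$ error. Joint continuity of $\varPhi_\eta$ in $(u,\xi,v)$ at $u=0$ then gives
\begin{align*}
    (\xieta{t+1},\veta{t+1}) \to \tilde\varPhi(\xiinf{t-1},\vinf{t-1}) = (\xiinf{t},\vinf{t}).
\end{align*}

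\emph{Third score.} The increment of the third score is
\begin{align*}
    \langle\weta{t+1}-\weta{t},x_{3,\eta}\rangle = \tfrac13\left(\eta q_1\langle x_1,x_{3,\eta}\rangle+\eta q_2\langle x_2,x_{3,\eta}\rangle+\eta q_3\|x_{3,\eta}\|^2\right).
\end{align*}
Each of $\eta q_1$, $\eta q_2$, $\eta q_3$ is $\mathcal O(1)$ and all inner products are bounded, so the increment is $\mathcal O(1)$. Hence the score stays $\tfrac{\eta}{24}+\mathcal O(\log\eta)$ for each fixed $t$.

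\emph{Main obstacle.} The delicate point is that the argument of $\varPhi_\eta$ itself depends on $\eta$. Pointwise convergence $\varPhi_\eta\to\tilde\varPhi$ is therefore not sufficient. I would handle this by the explicit joint continuity in $(\eta^{-1},\xi,v)$ noted above, and by checking that the exponentially small $q_3$-perturbation is absorbed in the limit.
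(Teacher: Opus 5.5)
Your proposal is correct and follows essentially the same route as the paper: the exact one-step computation giving $(\xieta{1},\veta{1})=(-\log 96,\log 2)$, then an induction using $\eta q_\pm = 1/(\eta^{-1}+e^{\xi\pm v})$, the vanishing of $\eta q_3$, and an $\mathcal O(1)$ per-step increment of the third score. The only cosmetic difference is that you bound $\eta q_3$ directly via $q(z)\le e^{-z}$, whereas the paper goes through Lemma~\ref{lemma:eta-scale} and $q\le\ell$.
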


\begin{proof}
    Since $\weta{0} = 0$, we have $q(\langle \weta{0}, x_i \rangle) = \frac{1}{2}$ for all $i \in \{1, 2, 3\}$ and using the GD update rule $\weta{t+1} = \weta{t} + \frac{\eta}{3} \sum_{i=1}^3 q(\langle \weta{t}, x_i \rangle) \; x_i$, we immediately have
    \begin{align*}
        \weta{1} = 
        \frac{\eta}{6} (x_1 + x_2 + x_{3, \eta}) = 
        \left( 4 \log\frac{\eta}{96}, 2 \log 2, \frac{\eta}{12} \right) ,
    \end{align*}
    which implies that
    \begin{align} \label{eq:initial-conf-xieta-veta}
        \xieta{1} = \frac{1}{4} \weta{1, 1} - \log \eta = - \log 96, 
        \quad \textrm{ and } \quad
        \veta{1} = \frac{1}{2} \weta{1, 2} = \log(2).
    \end{align}
    Thus, the first claim in Eq.~\eqref{eq:xi-v-induction} is immediate at $t=1$.
    For the second part, observe that
    \begin{align*}
        \langle \weta{1}, x_{3, \eta} \rangle = 
        -2 \log \frac{\eta}{96} + \frac{96}{\eta} \left( \log \frac{\eta}{96} \right)^2 + \frac{24 (\log 2)^2}{\eta} + \frac{\eta}{24} =
        \frac{\eta}{24} + \mathcal O(\log \eta),
    \end{align*}
    as claimed.

    Now, assume that the conditions in Eq.~\eqref{eq:xi-v-induction} hold at some fixed $t$.
    We will show that they also hold at $t+1$.
    First, recall that the GD update on $f_\mathrm{bin}$ is 
    \begin{align*}
        \weta{t+1} =
        \weta{t} + \frac{\eta}{3} \sum_{i=1}^3 q(\langle \weta{t}, x_i \rangle) \, x_i =
        \weta{t} + \frac{\eta}{3} \left( \qetap{t} x_1 + \qetam{t} x_2 + q(\langle \weta{t}, x_{3, \eta} \rangle) x_{3, \eta} \right).
    \end{align*}
    Using $\qthreeeta \coloneqq q(\langle \weta{t}, x_{3, \eta} \rangle)$, this implies that 
    \begin{align} \label{eq:xi-eta-3d-update}
        \xieta{t+1} &=
        \frac{1}{4} \weta{t+1, 1} - \log \eta =
        \xieta{t} + \frac{\eta}{48} (\qetap{t} + \qetam{t}) + \frac{\eta}{12} \, \qthreeeta \left( -\frac{1}{2} + \frac{24}{\eta} \log\frac{\eta}{96} \right), \\
        \veta{t+1} &= 
        \frac{1}{2} \weta{t+1, 2} =
        \veta{t} + \frac{\eta}{12} \left( \qetap{t} - \qetam{t} \right) + \frac{12 \log(2)}{6} \, \qthreeeta .
    \end{align}

    Recall that by Eq.~\eqref{eq:margins-wrt-xi-v}, the logistic weights of the first two points satisfy
    \begin{align*}
        \eta \qetap{t} =
        \frac{1}{\eta^{-1} + e^{\xieta{t} + \veta{t}}},
        \qquad \textrm{ and } \qquad
        \eta \qetam{t} =
        \frac{1}{\eta^{-1} + e^{\xieta{t} - \veta{t}}}.
    \end{align*}
    Using the first part of the induction hypothesis, we therefore obtain
    \begin{align} \label{eq:etaq-convergence}
        \eta \qetap{t} \to e^{-\xiinf{t-1} - \vinf{t-1}},
        \qquad \textrm{ and } \qquad
        \eta \qetam{t} \to e^{-\xiinf{t-1} + \vinf{t-1}}.
    \end{align}

    Additionally, by the second part of the induction hypothesis,
    \begin{align*}
        \langle \weta{t}, x_{3,\eta} \rangle - \log \eta =
        \frac{\eta}{24} + \mathcal O(\log \eta) - \log \eta
        \to +\infty.
    \end{align*}
    Thus, Lemma~\ref{lemma:eta-scale} gives $\eta \ell(\langle \weta{t}, x_{3,\eta} \rangle) \to 0$.
    Since $q(z) \leq \ell(z)$, it follows that $\eta \qthreeeta \to 0 $.
    Using these limits together with the update rules from Eq.~\eqref{eq:xi-eta-3d-update}, we obtain
    \begin{align*}
        \xieta{t+1} \to
        \xiinf{t-1} + \frac{1}{48} \left( e^{-\xiinf{t-1} - \vinf{t-1}} + e^{-\xiinf{t-1} + \vinf{t-1}} \right) = 
        \xiinf{t-1} + \frac{1}{24} e^{-\xiinf{t-1}} \cosh(\vinf{t-1}) =
        \xiinf{t},
    \end{align*}
    where the last equality follows from Eq.~\eqref{eq:xi-v-limiting-system}, since $a = \frac{1}{4}$.
    Similarly,
    \begin{align*}
        \veta{t+1} \to
        \vinf{t-1} + \frac{1}{12} \left( e^{-\xiinf{t-1} - \vinf{t-1}} - e^{-\xiinf{t-1} + \vinf{t-1}} \right) =
        \vinf{t-1} - \frac{1}{6} e^{-\xiinf{t-1}} \sinh(\vinf{t-1}) =
        \vinf{t},
    \end{align*}
    where again in the last step we used Eq.~\eqref{eq:xi-v-limiting-system} with $b = \frac{1}{2}$.
    Therefore,
    \begin{align*}
        \lim_{\eta \to \infty} (\xieta{t+1}, \veta{t+1}) = (\xiinf{t}, \vinf{t}),
    \end{align*}
    which proves the first condition in Eq.~\eqref{eq:xi-v-induction} at time $t+1$.
    It remains to prove the second condition at time $t+1$.
    Taking the inner product of the GD update with $x_{3,\eta}$, we have
    \begin{align} \label{eq:x3-margin-update}
        \langle \weta{t+1}, x_{3,\eta} \rangle =
        \langle \weta{t}, x_{3,\eta} \rangle + \frac{\eta}{3} \Big(
            \qetap{t} \langle x_1, x_{3,\eta} \rangle
            + \qetam{t} \langle x_2, x_{3,\eta} \rangle
            + \qthreeeta \| x_{3,\eta} \|^2
        \Big).
    \end{align}
    From the definition of the dataset, $x_{3,\eta} = \mathcal O(1)$ as $\eta \to \infty$.
    Since $x_1$ and $x_2$ are fixed, it follows that
    \begin{align*}
        \langle x_1, x_{3,\eta} \rangle = \mathcal O(1),
        \qquad
        \langle x_2, x_{3,\eta} \rangle = \mathcal O(1),
        \qquad \textrm{ and } \qquad
        \|x_{3,\eta}\|^2 = \mathcal O(1).
    \end{align*}
    Since $\eta \qetap{t}$ and $\eta \qetam{t}$ converge by Eq.~\eqref{eq:etaq-convergence}, while $\eta \qthreeeta \to 0$, from Eq.~\eqref{eq:x3-margin-update}, it follows that
    \begin{align*}
        \langle \weta{t+1}, x_{3,\eta} \rangle =
        \langle \weta{t}, x_{3,\eta} \rangle + \mathcal O(1).
    \end{align*}
    Using the second part of the induction hypothesis, we conclude that
    \begin{align*}
        \langle \weta{t+1}, x_{3,\eta} \rangle =
        \frac{\eta}{24} + \mathcal O(\log \eta) + \mathcal O(1) = 
        \frac{\eta}{24} + \mathcal O(\log \eta).
    \end{align*}
    Thus, both conditions in Eq.~\eqref{eq:xi-v-induction} hold at time $t+1$, which completes the induction.
\end{proof}

The preceding result gives the required lower bound on the loss over any fixed number of iterations.
\begin{lemma}
    Let $\weta{t}$ denote the GD iterates on the dataset in Eq.~\eqref{eq:3d-dataset}, starting from $\weta{0} = 0$.
    Then, for every $N \geq 1$, there exists $\eta_N > 0$ such that for all $\eta \geq \eta_N$,
    \begin{align*}
        f_\mathrm{bin}(\weta{t}) > \frac{1}{\eta}
        \qquad \textrm{ for all } 0 \leq t \leq N.
    \end{align*}
\end{lemma}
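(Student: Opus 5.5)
We work with the rescaled loss $\rhoeta{t} = \eta f_\mathrm{bin}(\weta{t})$ and show $\rhoeta{t} > 1$ for all $0 \leq t \leq N$ once $\eta$ is large. Two finite families of times are treated separately: $t=0$, and $1 \leq t \leq N$. At $t=0$ we have $f_\mathrm{bin}(\weta{0}) = \log 2$, so $\rhoeta{0} = \eta \log 2 > 1$ as soon as $\eta > 1/\log 2$.

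For fixed $t \geq 1$, write
\begin{align*}
    3\rhoeta{t} = \eta\,\ell(\log\eta + \xieta{t} + \veta{t}) + \eta\,\ell(\log\eta + \xieta{t} - \veta{t}) + \eta\,\ell(\langle \weta{t}, x_{3,\eta}\rangle),
\end{align*}
using the identities~\eqref{eq:margins-wrt-xi-v}, which hold verbatim in three dimensions since $x_1, x_2$ have zero third coordinate. By Lemma~\ref{lemma:3d-induction}, $(\xieta{t},\veta{t}) \to (\xiinf{t-1},\vinf{t-1})$. Lemma~\ref{lemma:eta-scale}(2) then shows that the first two terms converge to $e^{-\xiinf{t-1}\mp\vinf{t-1}}$. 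The third score equals $\eta/24 + \mathcal O(\log\eta)$, so it is $\log\eta + r_\eta$ with $r_\eta \to \infty$, and Lemma~\ref{lemma:eta-scale}(1) shows the third term tends to $0$. Hence
\begin{align*}
    \rhoeta{t} \to \tfrac{2}{3} e^{-\xiinf{t-1}}\cosh(\vinf{t-1}) = \rhoinf{t-1},
\end{align*}
in the notation of Lemma~\ref{lemma:rho-limit-and-system}. That lemma's recursion~\eqref{eq:rho-v-limiting-system} is purely algebraic in $(\xiinf{t},\vinf{t})$, so it applies to the limiting system started at $(-\log 96, \log 2)$.

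Next, the initialization $(\xiinf{0},\vinf{0}) = (-\log 96,\log 2)$ gives $\rhoinf{0} = \frac{2}{3}\cdot 96\cdot\cosh(\log 2) = 80$. Lemma~\ref{lemma:invariant-region-specialized} then gives $\rhoinf{s} \geq 72$ for all $s \geq 0$. Thus for each $t \in \{1,\dots,N\}$ there is $\eta_t$ with $\rhoeta{t} > 1$ (indeed $>71$) for all $\eta \geq \eta_t$. Taking $\eta_N \coloneqq \max\{1/\log 2 + 1, \eta_1,\dots,\eta_N\}$, a finite maximum, gives the claim.

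The only delicate point is bookkeeping rather than analysis. We must ensure that the dataset $x_{3,\eta}$ varies with $\eta$ in a way already accounted for by Lemma~\ref{lemma:3d-induction}; it is, since the limits there are taken along the $\eta$-dependent family. We must also ensure that the convergence is pointwise in $t$ only, which is why the argument is restricted to the finitely many times $t \leq N$.
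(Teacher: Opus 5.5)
Your proof is correct and follows the paper's argument essentially step for step. You show that $\eta f_\mathrm{bin}(w_t^{(\eta)})$ converges to $\tilde\rho_{t-1} \geq 72$ for each fixed $1 \le t \le N$ via Lemma~\ref{lemma:3d-induction} and Lemma~\ref{lemma:eta-scale}, take a maximum over finitely many thresholds, and handle $t=0$ via $f_\mathrm{bin}(0) = \log 2$; your explicit checks (the score identities carrying over to three dimensions, the $(\tilde\rho,\tilde v)$ recursion following algebraically from the limiting map started at $(-\log 96,\log 2)$, and $\tilde\rho_0 = 80$) only fill in bookkeeping the paper leaves implicit, though you should rename the per-time thresholds $\eta_t$ so they do not clash with the final $\eta_N$.
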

\begin{proof}
    We first use Lemma~\ref{lemma:3d-induction} to control the rescaled loss.
    For every fixed $t \geq 1$,
    \begin{align*}
        \rhoeta{t} = \frac{\eta}{3} \left( \ell(\log \eta + \xieta{t} + \veta{t}) + \ell(\log \eta + \xieta{t} - \veta{t}) + \ell(\langle \weta{t}, x_{3, \eta} \rangle) \right).
    \end{align*}
    By the first condition in Eq.~\eqref{eq:xi-v-induction} and Lemma~\ref{lemma:eta-scale}, we have
    \begin{align*}
        \eta \ell(\log \eta + \xieta{t} + \veta{t}) \to e^{-\xiinf{t-1} - \vinf{t-1}},
        \quad\quad \textrm{ and } \quad\quad
        \eta \ell(\log \eta + \xieta{t} - \veta{t}) \to e^{-\xiinf{t-1} + \vinf{t-1}}.
    \end{align*}
    On the other hand, by the second condition in Eq.~\eqref{eq:xi-v-induction} and Lemma~\ref{lemma:eta-scale}, $ \eta \ell(\langle \weta{t}, x_{3,\eta} \rangle) \to 0$.
    Thus,
    \begin{align*}
        \rhoeta{t} \to
        \frac{1}{3} \left( e^{-\xiinf{t-1} - \vinf{t-1}} + e^{-\xiinf{t-1} + \vinf{t-1}} \right) = 
        \frac{2}{3} e^{-\xiinf{t-1}} \cosh(\vinf{t-1}) = 
        \rhoinf{t-1}.
    \end{align*}

    By Lemma~\ref{lemma:invariant-region-specialized}, we have $\rhoinf{t} \geq 72$ for all $t \geq 0$.
    Therefore, for every fixed $t \geq 1$, it holds that $\rhoeta{t} \to \rhoinf{t-1} \geq 72$.
    Fix $N \geq 1$.
    Since there are only finitely many $t \in \{1, \dots, N\}$, there exists $\eta_N > 0$ such that $\rhoeta{t} > 36$ for all $1 \leq t \leq N$ and every $\eta \geq \eta_N$.
    Then, since $\rhoeta{t} = \eta f_\mathrm{bin}(\weta{t})$, we obtain
    \begin{align*}
        f_\mathrm{bin}(\weta{t}) =
        \frac{\rhoeta{t}}{\eta} >
        \frac{36}{\eta} >
        \frac{1}{\eta}
        \qquad \textrm{ for all } 1 \leq t \leq N \textrm{ and every } \eta \geq \eta_N.
    \end{align*}
    Finally, at $t=0$, we have $f_\mathrm{bin}(\weta{0}) = \log(2) > \frac{1}{\eta}$ for all sufficiently large $\eta$.
    Thus, increasing $\eta_N$ if necessary, the conclusion holds for all $0 \leq t \leq N$.
\end{proof}

\subsection{Checking the dataset assumptions}

The norm constraints are immediate for the first two points.
For the third point,
\begin{align*}
    \| x_{3, \eta} \|^2 =
    \left( -\frac{1}{2} + \frac{24}{\eta} \log\frac{\eta}{96} \right)^2 + \left(\frac{12\log 2}{\eta}\right)^2 + \frac{1}{4}
    \to \frac{1}{2},
\end{align*}
so $\| x_{3, \eta} \| < 1$ for all sufficiently large $\eta$.
To verify linear separability with a lower bounded margin, consider the unit vector $w^* \coloneqq \frac{(2,0,3)}{\sqrt{13}} $.
Then, for all sufficiently large $\eta$,
\begin{align*}
    \langle w^*,x_1\rangle =
    \langle w^*,x_2\rangle =
    \frac{1}{2\sqrt{13}},
    \qquad
    \langle w^*,x_{3,\eta}\rangle =
    \frac{1}{\sqrt{13}} \left( \frac{1}{2} + \frac{48}{\eta}\log\frac{\eta}{96} \right),
\end{align*}
and so $\gamma(X_\eta) \geq \frac{1}{2\sqrt{13}} $.

\subsection{Proof of additional claims of Theorem~\ref{thm:binary-counterexample}}

\subsubsection{Termination of the hard perceptron~\ref{thm:hp-transition-1}}

The same family also illustrates why the dependence of the step-size threshold on $\delta$ cannot be avoided.
Recall the dataset family $X_\eta$~\eqref{eq:3d-dataset} with all labels equal to $1$, and let $\delta_\eta$ denote their corresponding (binary) gaps, as defined in Eq.~\eqref{eq:binary-gap}.
Starting from the origin, the first iterate of the binary hard perceptron~\eqref{eq:binary-perceptron} is
\begin{align*}
    z_1^\infty =
    \frac{1}{6} (x_1+x_2+x_{3,\eta}) =
    \left(
        \frac{4}{\eta}\log\frac{\eta}{96},
        \frac{2\log2}{\eta},
        \frac1{12}
    \right),
\end{align*}
and the corresponding scores are
\begin{align*}
    \langle z_1^\infty, x_1 \rangle = \frac{1}{\eta} \log\frac{\eta}{48},
    \qquad
    \langle z_1^\infty,x_2 \rangle = \frac{1}{\eta} \log\frac{\eta}{192},
    \qquad
    \langle z_1^\infty,x_{3, \eta} \rangle = \frac{1}{24} -\frac{2}{\eta} \log\frac{\eta}{96} + \mathcal O \left( \frac{\log^2\eta}{\eta^2} \right).
\end{align*}
Thus, for $\eta$ large enough, all three scores are positive and the hard perceptron dynamics terminates after one iteration.
Its corresponding gap parameter is therefore the minimum score, that is
\begin{align} \label{eq:counterexample-delta}
    \delta_\eta =
    \frac{1}{\eta} \log\frac{\eta}{192}.
\end{align}

\subsubsection{Sharpness of the $\delta$-dependency for transitioning to the stable regime~\ref{thm:delta-dep-transition}}
Now fix the dataset $X_\eta$ and run GD with a generic step size $\alpha>0$.
The first iterate is
\begin{align*}
    w_1^\alpha =
    \frac{\alpha}{6}(x_1+x_2+x_{3,\eta}) =
    \alpha z_1^\infty.
\end{align*}
Using only the contribution of the second point, we obtain
\begin{align*}
    f_\mathrm{bin}(w_1^{\alpha}) \geq
    \frac{1}{3} \ell(\langle w_1^\alpha, x_2 \rangle) = 
    \frac{1}{3} \ell(\alpha \delta_\eta ) = 
    \frac{1}{3} \log\left( 1 + e^{-\alpha \delta_\eta} \right).
\end{align*}
Since $\alpha \delta_\eta \geq 0$, we have $e^{-\alpha \delta_\eta} \in (0, 1]$.
Using the inequality $\log(1+x) \geq x/2$ for $x \in [0,1]$, we obtain a general lower bound valid for any $\alpha > 0$:
\begin{align} \label{eq:f-bin-lower-bound}
    f_\mathrm{bin}(w_1^{\alpha}) \geq
    \frac{1}{6} e^{-\alpha \delta_\eta}.
\end{align}
To prove the binary claim in Theorem~\ref{thm:binary-counterexample}, fix $c \in (0, 1)$ and choose the step size $\alpha = \frac{c}{\delta_\eta} \log \frac{1}{\delta_\eta}$.
For all sufficiently large $\eta$, substituting this $\alpha$ into Eq.~\eqref{eq:f-bin-lower-bound} gives
\begin{align*}
    f_\mathrm{bin}(w_1^{\alpha}) \geq
    \frac{1}{6} e^{-c \log(1/\delta_\eta)} = 
    \frac{1}{6} \delta_\eta^c.
\end{align*}
Since $c < 1$ and $\delta_\eta \to 0$, this implies
\begin{align*}
    \alpha f_\mathrm{bin}(w_1^{\alpha}) \geq
    \frac{c}{6} \delta_\eta^{c-1} \log\left( \frac{1}{\delta_\eta} \right)
    \xrightarrow{\eta \to \infty} \infty.
\end{align*}
Thus, $f_\mathrm{bin}(w_1^{\alpha}) > 1/\alpha$ for all sufficiently large $\eta$.
Also, the initialization satisfies $f_\mathrm{bin}(w_0^\alpha) = \log 2 > \frac{1}{\alpha}$ for all sufficiently large $\eta$, meaning GD has not entered the stable region after one iteration.
Note that since $\delta_\eta = \frac{\log(\eta/192)}{\eta}$, we have 
\begin{align*}
    \frac{\alpha}{\eta} =
    c \; \frac{\log \eta - \log\log(\eta / 192)}{\log \eta - \log 192} 
    \xrightarrow{\eta \to \infty} c,
\end{align*}
so the step size $\alpha$ is indeed of the same scale as the data parameter~$\eta$.

\subsubsection{Sharpness of the $\delta$-dependency for obtaining $\varepsilon$-loss~\ref{thm:delta-dep-eps-loss}}

Finally, for any $\varepsilon \in (0, \frac{1}{6})$, if we consider step size $\alpha < \frac{1}{\delta_\eta} \log(\frac{1}{6 \varepsilon})$, then Eq.~\eqref{eq:f-bin-lower-bound} gives
\begin{align*}
    f_\mathrm{bin}(w_1^\alpha) \geq
    \frac{1}{6} e^{-\alpha \delta_\eta} > \varepsilon,
\end{align*}
which is the claimed bound.

\end{document}

%% file: preamble.tex
\usepackage[colorlinks=true,bookmarks=true,linkcolor=blue,urlcolor=blue,citecolor=blue,breaklinks=true]{hyperref}
\usepackage{breakcites}
\usepackage{bbm}

\usepackage{doi}

\usepackage{amsmath}
\usepackage{amsfonts}
\usepackage{amsthm}
\usepackage{amssymb}

\usepackage{mathtools}

\usepackage{xcolor}
\usepackage{bm} 
\usepackage{dsfont} 

\usepackage{graphicx}
\usepackage{tikz-cd}
\usepackage{tikz}
\definecolor{mycolor1}{rgb}{0.105882,0.619608,0.466667}
\definecolor{mycolor2}{rgb}{0.85098,0.372549,0.00784314}
\definecolor{mycolor3}{rgb}{0.458824,0.439216,0.701961}
\definecolor{mycolor4}{rgb}{0.905882,0.160784,0.541176}
\definecolor{mycolor5}{rgb}{0.4,0.65098,0.117647}
\definecolor{mycolor6}{rgb}{0.65098,0.462745,0.113725}
\definecolor{mycolor7}{rgb}{0.901961,0.670588,0.00784314}
\definecolor{mycolor8}{rgb}{0.4,0.4,0.4}
\definecolor{mycolor9}{rgb}{0.301961,0,0.294118}
\definecolor{mycolor10}{rgb}{0.0313725,0.25098,0.505882}

\usepackage{psfrag}

\usepackage{algorithm}
\usepackage{algpseudocode}
\algdef{SE}[DOWHILE]{Do}{doWhile}{\algorithmicdo}[1]{\algorithmicwhile\ #1}%

\usepackage{mathrsfs} 

\newcommand{\argmax}[1]{\underset{#1}{\operatorname{argmax}}}

\newcommand{\reals}{{\mathbb{R}}}

\newcommand{\diag}{\mathrm{diag}}

\newcommand{\TODOF}[1]{}

\newtheorem{theorem}{Theorem}[section]
\newtheorem{lemma}[theorem]{Lemma}
\newtheorem{proposition}[theorem]{Proposition}
\newtheorem{corollary}[theorem]{Corollary}
\newtheorem{remark}[theorem]{Remark}

\usepackage[lmargin=3cm,rmargin=3cm,bottom=3cm,top=3cm]{geometry}
\usepackage[utf8]{inputenc}
\usepackage[round,compress]{natbib}

\usepackage{sectsty}
\makeatletter\def\@seccntformat#1{\protect\makebox[0pt][r]{\csname the#1\endcsname\hspace{12pt}}}\makeatother

\usepackage{subcaption}

\usepackage{enumitem} 

\usepackage[normalem]{ulem} 

\usepackage{multicol}

\usepackage{blindtext} 

\newcommand{\aref}[1]{\hyperref[#1]{A\ref{#1}}}

\newcommand{\seta}[1]{s^{(#1)}}
\newcommand{\sinf}[1]{\tilde s^{(#1)}}

\newcommand{\ZZ}[1]{Z^{(#1)}}
\newcommand{\Seta}[1]{s^{(#1)}}
\newcommand{\Sinf}[1]{\tilde s^{(#1)}}
\newcommand{\ZZeta}[1]{Z^{(#1)}}
\newcommand{\Zzeta}[1]{z^{(#1)}}
\newcommand{\WWeta}[1]{W^{(#1)}}
\newcommand{\Wweta}[1]{w^{(#1)}}
\newcommand{\ZZinf}[1]{\tilde Z^{(#1)}}
\newcommand{\Zzinf}[1]{\tilde z^{(#1)}}

\newcommand{\nueta}[1]{\nu^{(#1)}}
\newcommand{\nuinf}[1]{\tilde \nu^{(#1)}}

\newcommand{\Zomega}[1]{\omega^{(#1)}}

\newcommand{\pp}[1]{p^{(#1)}}
\newcommand{\qq}[1]{q^{(#1)}}

\newcommand{\SSS}[1]{S^{(#1)}}
\newcommand{\SSSinf}[1]{\tilde S^{(#1)}}

\newcommand{\Ent}{\mathsf{Ent}}
\newcommand{\winnernoarg}{\widetilde{\mathcal W}}
\newcommand{\winner}[1]{\widetilde{\mathcal W}^{(#1)}}

\newcommand{\softmax}{\mathrm{softmax}}
\newcommand{\hardmax}{\mathrm{hardmax}}

\newcommand{\xp}{x_+}
\newcommand{\xm}{x_-}
\newcommand{\weta}[1]{w_{#1}^{(\eta)}}

\newcommand{\veta}[1]{v_{#1}^{(\eta)}}
\newcommand{\xieta}[1]{\xi_{#1}^{(\eta)}}
\newcommand{\qetap}[1]{q_{+,#1}^{(\eta)}}
\newcommand{\qetam}[1]{q_{-,#1}^{(\eta)}}
\newcommand{\qthreeeta}{q_3^{(\eta)}}
\newcommand{\rhoeta}[1]{\rho_{#1}^{(\eta)}}

\newcommand{\meta}{m^{(\eta)}}
\newcommand{\reta}{r^{(\eta)}}

\newcommand{\vinf}[1]{\tilde v_{#1}}
\newcommand{\xiinf}[1]{\tilde \xi_{#1}}
\newcommand{\rhoinf}[1]{\tilde \rho_{#1}}

\newcommand{\wbin}{v}